\DeclareMathOperator*{\maxmin}{\max/\min}
\newcommand{\mR}{{\mathbb R}}
\newcommand{\mB}{{\mathbb B}}
\newcommand{\Belement}{b}
\newcommand{\Bbound}{\beta}
\newcommand{\Ccal}{\mathcal{C}_+}
\newcommand{\ordo}{\mathcal{O}}
\newtheorem{thm}{Theorem}
\newtheorem{cor}[thm]{Corollary}
\newtheorem{lemma}[thm]{Lemma}
\newtheorem{prop}[thm]{Proposition}
\newtheorem{remark}{Remark}
\newtheorem{assumption}{Assumption}
\newcommand{\Ltwo}[1]{L_{2}(#1)}
\newcommand{\Linf}[1]{L_{\infty}(#1)}
\title{\LARGE \bf
Modeling collective behaviors: A moment-based approach
}
\author{Silun Zhang$^{1}$, Axel Ringh$^{2}$, Xiaoming Hu$^{3}$, and Johan Karlsson$^{3}$
\thanks{*This work was supported by the Swedish Research Council (VR), % grant 2014-5870,
 and by the ACCESS Linnaeus Center, KTH Royal Institute of Technology. }% <-this % stops a space
\thanks{$^{1}$ MIT Laboratory for Information and Decision Systems (LIDS), Massachusetts Institute of Technology, Cambridge, USA. {\tt\small silunz@mit.edu}.}
\thanks{$^{2}$Department of Electronic and Computer Engineering, The Hong Kong University of Science and Technology, Hong Kong, China. {\tt\small eeringh@ust.hk}.}
\thanks{$^{3}$Division of Optimization and Systems Theory, Department of Mathematics, KTH Royal Institute of Technology, Stockholm, Sweden.
{\tt\small hu@kth.se}, {\tt\small johan.karlsson@math.kth.se.}}%
}
\begin{document}

\maketitle
\thispagestyle{empty}
\pagestyle{empty}

%%%%%%%%%%%%%%%%%%%%%%%%%%%%%%%%%%%%%%%%%%%%%%%%%%%%%%%%%%%%%%%%%%%%%%%%%%%%%%%%
\begin{abstract}

In this work we introduce an approach for modeling and analyzing collective behavior of a group of agents using moments. We represent the group of agents via their distribution and derive a method to estimate the dynamics of the moments. We use this to predict the evolution of the distribution of agents by first computing the moment trajectories and then use this to reconstruct the distribution of the agents. In the latter an inverse problem is solved in order to reconstruct a nominal distribution and to recover the macro-scale properties of the group of agents. The proposed method is applicable for several types of multi-agent systems, e.g., leader-follower systems. We derive error bounds for the moment trajectories and describe how to take these error bounds into account for computing the moment dynamics. The convergence of the moment dynamics is also analyzed for cases with monomial moments. To illustrate the theory, two numerical examples are given. In the first we consider a multi-agent system with interactions and compare the proposed method for several types of moments. In the second example we apply the framework to a leader-follower problem for modeling a pedestrian crowd.

%The approximate dynamic of the moments can then be used for computing

%In this paper we use generalized moments to represent crowd of agents.

%We consider modeling collective behavior of crowds using moments and how to optimally approximate the moment dynamics based on the given moments.

%Using the logarithm norm we also derive bounds for the difference between the approximated moment trajectories and the true moments.

\end{abstract}

%\keywrods{Agents-based systems; Reduced order modeling; Large-scale systems}

%%%%%%%%%%%%%%%%%%%%%%%%%%%%%%%%%%%%%%%%%%%%%%%%%%%%%%%%%%%%%%%%%%%%%%%%%%%%%%%%
\section{Introduction}

%\jk{Comments by Johan}\\
%\sz{Comments by Silun}\\
%\ar{Comments by Axel}\\
%\com{Color for general comments}

The study of collective behavior of crowds is important in numerous examples in both natural and social sciences, and in particular for understanding the macro-scale behavior of collectives based on micro-scale dynamics of each individual. This is essential in a wide range of applications, such as biology \cite{couzin2005effective}, material science \cite{puntes2004collective, singamaneni2011magnetic}, and macro-economy \cite{cont2000herd}. Also in social sciences, many collective phenomena in society can be treated in such a framework \cite{burger2014mean, piccoli2009pedestrian}, e.g., understanding movements of crowds and how to evacuate pedestrian crowds in panic situations \cite{helbing2009pedestrian}, \cite{qi2016design, wang2015modeling},
or how circulated opinions in social networks affect the public opinion in the wake of crisis incidents \cite{hegselmann2002opinion, yang2014opinion}.

%Inspired by the numerous examples of collective behavior emerging in nature and society, it is essential to understand the macro-scale behavior of a group of particles or agents, based on micro-scale dynamics of each individual. More than the theoretical interest, this conceptual problem shows its versatility in a wide range of applications, such as biology \cite{couzin2005effective}, material science \cite{puntes2004collective, singamaneni2011magnetic}, and macro-economy \cite{cont2000herd}. In particular among humans, many collective phenomena in society can be treated in such a framework \cite{burger2014mean, piccoli2009pedestrian}, e.g, moving pedestrian crowds in situations of panic \cite{qi2016design, wang2015modeling}, or formation of opinions in social networks \cite{hegselmann2002opinion, yang2014opinion}.

Such systems typically consist of a large number of agents, often too large for modeling each agent individually. Moreover, in many cases the agents are exchangeable and distinguishing each agent may even not be desirable.
To model such system of
%indistinguishable
homogeneous agents
it is sufficient to consider
%we resort to
the distribution of the agents, which can be described by the nonnegative measure (occupation measure)
\begin{equation*}
d\mu(x) = \frac{1}{N}\sum_{i=1}^N \delta(x - x_i) dx,
\end{equation*}
where $x_i \in \mR^d$ is the state of agent $i$ and $\delta$ denotes the Dirac delta function.
To characterize the evolution of such distributions, one of the methods used is the mean-field theory, (see, e.g., \cite{ bolley2011stochastic, jabin2017mean}). In this approach, one lets the number of particles tend to infinity, in which case the distribution of the agents converges weakly to the solution of certain kinetic equations \cite{dobrushin1979vlasov}.
These kinetic equations are typically partial differential equations (PDEs), such as the Liouville equation \cite{brockett2012notes, liboff2003kinetic}, and the Jeans-Vlasov (or Vlasov) equation \cite{dobrushin1979vlasov, jabin2017mean}, in the case of agents governed by deterministic dynamics. If the individual dynamics is instead stochastic, as in, e.g., mean-field games \cite{huang2012social, huang2006large, lasry2007mean}, the resulting PDEs are the Fokker-Planck equation \cite{brockett2012notes} and the McKean-Vlasov equation \cite{jabin2017mean}.
However, solving such PDEs tends to be computationally expensive, and care needs to be taken in order to guarantee stability and consistency of the solutions.
In addition, suitable control design strategies in order to steer the overall behavior of such PDE systems are nontrivial.
In the deterministic case, which is also the focus of this work, problems such as state estimation, prediction, and observability of the distributions for linear multi-agent systems without interactions have been studied in, e.g., \cite{chen2017optimal, chen2018state, zeng2016ensemble}.

In this work, we propose a different approach to studying macro-scale behavior based on micro-scale agent models. In particular, we describe the agent distribution using moments
%In this work, we propose a different approach in which we describe and study the distribution of agents based on moments, given by
\[
m_k=\frac{1}{N}\sum_{i=1}^N \phi_k(x_i), \text{ for } k=1,\ldots, M,
\]
where $\phi_k$ are some kernel functions. With suitably selected kernel functions the moments convey the overall information of the distribution, which can be used to reconstruct nominal estimates and describe macroscopic properties of the distribution.
Moreover, the dynamics of the moments can be closely approximated by an ODE, which is obtained using the dynamics of the agents.
For computing the moment dynamics we utilize a lifting technique inspired by the Koopman operator framework \cite{budivsic2012applied}, \cite{ mauroy2016global, rowley2017model}.
Thereby, instead of directly addressing the nonlinear systems the problem is lifted into an infinite dimensional structured problem (cf. \cite{benamou2000computational, rowley2017model}) which
naturally admits approximations by finite dimensional linear or quadratic systems.

This moment based system representation gives rise to a model reduction technique for systems containing a large number of identical nonlinear subsystems and the reduced order model is obtained by solving a convex optimization problem.
Further, we derive error bounds on the resulting moment trajectories that are expressed in parameters that can be tuned in the optimization problems.
The theory is applicable for a wide range of applications, such as multi-agent systems with interactions as well as with leaders and/or control input, and the use of this framework can considerably reduce the computational burden for analyzing such systems.
In particular, compared to the mean-field method, the proposed method decouples the dependence of space and time,
% in the resulted models.
which we will get back to in connection to the numerical examples in Section~\ref{sec:example}.
We therefore propose to develop and use this theory for multi-agent applications such as crowd dynamics, opinion dynamics and other macroscopic problems.

%%is a bottleneck in terms of accuracy. As shown in Figure~\ref{Fig:McKean_Vlasov_solution}, the density function is noticeably smoothed out.
%
%Remove?:
%
%On the contrary the accuracy of the proposed moment-based method decays as the time goes on. Therefore the choice between the PDE method and the proposed method should heavily depend on the length of validity time in which the model is expected to work. For the case with a relatively short validity time, the proposed method will win out for its easy computation. We note that in the applications regarding closed-loop control, a feedback sampling can reset the accumulated error to zero periodically, which compensates a short valid time of the moment-based model. Moreover, the control design for the moment model, an ODE system, is much easier to perform.
%
%Regarding the computational complexity, in the mean field methods the models (kinetic PDEs) are easy to obtain given the individual dynamics, but solving the resulted PDE is computationally expensive. Instead, in the moment-based method, the main effort is paid in the modeling procedure, but solving the ODE model obtained becomes a piece of cake. In the most cases solving the model is far more frequent than obtaining a model. From such a perspective, the proposed method gains much more advantage on the computational complexity.

Partial results in this work have previously been reported in the conference paper \cite{zhang2018CDCmodeling}.
%, where the setups for basic systems of agents and systems with interactions are introduced.
%.%
%\footnote{
%The current paper includes many additional and improved results, e.g., leader-follower models, convergence analysis, improved optimization formulations, %problems for reconstruction of distributions,
%and a 2D pedestrian crowd dynamics example.
%}
The outline of this article is as follows.  In Section~\ref{sec:background} we introduce background material on the moment problem and on the logarithmic norm. 
Section~\ref{sec:moment_rep} presents the main methodology for modeling multi-agent systems based on moments.
% and also derive bounds for the error in the moment trajectories. 
Section~\ref{sec:Convergence} analyzes the convergence of the obtained moment dynamics for certain systems and Section~\ref{sec:alg} discusses optimization problems, both for obtaining the reduced order models and for the reconstruction of a distribution from the moments. Section~\ref{sec:example} presents numerical examples and finally the conclusions are given in Section~\ref{sec:conclusion}.

\section{Background}\label{sec:background}
This section introduces some background material and also sets up notation used throughout the rest of the paper. 
To this end, by $\| \cdot \|$ we denote the vector $2$-norm and the corresponding induced matrix norm, and by $\| \cdot \|_{\Ltwo{X}}$ and $\| \cdot \|_{\Linf{X}}$ we denote the $L_2$ norm and the $L_\infty$ norm, respectively, for functions defined on $X$. Finally, $\mathbf C^1(X)$ denotes the set of continuously differentiable functions defined on $X$.

\subsection{From moments to distribution}\label{subsec:inverse}
Given a compact set $K\!\subset\!\mR^d$ and a family of kernel functions $\phi_k\!\in\! \mathbf C^1(K)$, $k\!\! =\!\! 1, \ldots, M$, the corresponding moments of a nonnegative measure $d\mu\!\in\! {\mathcal M}_+(K)$ are defined as
\begin{align}\label{eq:moments}
m_k := \int_K \phi_k(x)d\mu(x), \text{ for } k =1,\ldots M.
\end{align}
The problem of computing the moments given a measure is straight forward and requires little attention, whereas the inverse problem of recovering a nonnegative measure $d\mu$ from a sequence of numbers $m := (m_1, \dots, m_M)$ is a classical problem in mathematics \cite{%shohat1943problem,
akhiezer1965classical, krein1977themarkov, lasserre2009moments}.
Although this inverse problem is in general ill-posed and there may be an infinite family of solutions, the set of moments still gives valuable macro-scale information about the distribution.
For example they can be used to give an estimate of the distribution with a resolution that depends on the kernel functions chosen and the accuracy of the moments, or to bound the mass of the measure in a given region \cite{karlsson2013uncertainty, marzetta1984power}.  %lang1984confidence, lang1985bounds,
From the perspective of multi-agent systems this means that, e.g., in an evacuation scenario, we could answer questions regarding bounds or estimates on the number of individuals that are located in a certain area.

%A lot of work has been done in the case of monomial bases functions \cite{schmudgen1991thek, putinar1993positive, lasserre2001global, lasserre2008semidefinite, curto2008analogue}, however more general basis functions have also been considered \cite{krein1977themarkov, georgiou2006relative, karlsson2016multidimensional}

Moment problems also occur in many application areas, such as spectral estimation \cite{stoica2005spectral}, optimal control \cite{hernandez1996linear, gaitsgory2009linear}, and modeling the distribution of stochastic processes in, e.g., a chemical plant \cite{singh2011approximate} or an electrical or mechanical system \cite{ghusinga2017approximate}. Polynomial moments have also been used in the literature on collective leader-follower problems for crowd control \cite{yang2015shaping}, as they can be used to achieve polygon shapes \cite{milanfar1995reconstructing}.

\subsection{The logarithmic norm}
%\jk{Something along the lines: For linear systems $\dot x=Ax$, the largest  real part of the eigenvalues of $A$ can be used to determine stability and give bounds on how sensitive the system is to perturbations. The corresponding notion for nonlinear system is the the logarithmic norm \cite{soderlind1984nonlinear, soderlind2006logarithmic}....}
For a linear system $\dot x=Ax$, the spectral abscissa of the matrix $A$ can be used to determine stability and gives bounds on how sensitive the system is to perturbations. These concepts can be generalized to nonlinear systems by using the so-called logarithmic norm \cite{soderlind1984nonlinear, soderlind2006logarithmic}, \cite[Sec.~II.8]{desoer1975feedback}.
The logarithmic $2$-norm of a matrix $A \in \mathbb{R}^{d\times d}$ is defined as
\begin{equation}\label{eq:DefLN}
  \nu[A] := \lim _{h\to 0^+}  \frac{\|I+hA\|-1}{h}.
\end{equation}
It can be easily shown that $\nu[\cdot]$ is a convex function \cite[p.~31]{desoer1975feedback}.
Moreover, note that $\nu[A] = \lambda_{\rm max}( A+A^*)/2$, where $\lambda_{\rm max}(\cdot)$ is the maximal eigenvalue of a matrix \cite[p.~33]{desoer1975feedback}.

The standard matrix norm bound of the matrix exponential $\|\exp(At)\|\le \exp(\|At\|)$ is often
too conservative. However, the logarithmic norm allows a tighter bound by distinguishing between forward and reverse time.
%Thus it can distinguish between stable and unstable systems.
%It is defined as
%Using the logarithmic norm, the bound is given as follows. %by
%one can bound the matrix exponential as follows. %Then we have the following lemma on the matrix exponential.

\begin{lemma}[{\cite[Prop. 2.1]{soderlind2006logarithmic}}]\label{lem:expBound}
  Let $A \in \mathbb{R}^{d \times d}$, then
  \[
  \|e^{At}\| \leq e^{t \nu[A]}\quad  \mbox{ for all } t \geq 0.
  \]
\end{lemma}

In particular, when $\nu[A]$ is negative, the bound in Lemma~\ref{lem:expBound} is clearly less conservative than using any matrix norm.
%Moreover, we can obtain the following result easily.
%\begin{lemma}

%\end{lemma}

The logarithmic norm can be extended to nonlinear dynamical systems $\dot{x} = f(x)$. To this end, consider the nonlinear mapping $f: K \subset \mR^d \to \mR^d$.
%For a nonlinear mapping $f: K \subset X \to X$,
If $f$ is Lipschitz on $K$, then the least upper bound Lipschitz constant of $f$ is defined by
\begin{equation}\label{Def:L}
    L[f] := \sup_{u,v \in K, u \neq v} \frac{\|f(u)-f(v)\|}{\|u-v\|}.
  \end{equation}
Accordingly we define the least upper bound logarithmic Lipschitz constant as
\begin{equation}\label{eq:DefNonLN}
  M[f] := \lim _{h\to 0^+}  \frac{L[I+hf]-1}{h},
\end{equation}
where $I+hf$ denotes the mapping $x \mapsto x+hf(x)$. %that maps a point $x \in D$ into $x+hf(x)$.
This logarithmic Lipschitz constant $M[f]$ is the nonlinear generalization of the logarithmic norm \eqref{eq:DefLN}, and if $f$ is a $\mathbf C^1$ mapping with a compact and convex domain, then $M[f]$ can be computed by the logarithmic norm of the Jacobian of $f$ \cite[p. 672]{soderlind1984nonlinear}.
%Using this notation we have the following result on the lub logarithmic Lipschitz constant of a $C^1$ mapping \cite[p. 672]{soderlind1984nonlinear}. % \ar{The result is state in \cite[p. 672]{soderlind2006logarithmic}, but for completeness we also give a} proof which  can be found in Appendix.

\begin{lemma}\label{lem:LNormforC1}
If the function $f: K \subset \mR^d \to \mR^d$ is continuously differentiable and domain $K$ is convex and compact, then
\begin{equation*}
  M[f]=\sup_{x \in K} \nu[\nabla f(x)].
\end{equation*}
\end{lemma}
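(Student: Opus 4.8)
The plan is to reduce the statement to the elementary identity $L[g]=\sup_{x\in K}\|\nabla g(x)\|$, valid for any $\mathbf C^1$ map $g$ on a convex compact set, and then to handle the one genuinely delicate point: interchanging a supremum over $x$ with a limit in $h$.

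First I would establish that for a $\mathbf C^1$ map $g:K\to\mR^d$ with $K$ convex and compact,
\[
L[g]=\sup_{x\in K}\|\nabla g(x)\|.
\]
The inequality ``$\le$'' follows from the integral mean value theorem: since $K$ is convex, the segment $[v,u]$ lies in $K$ and $g(u)-g(v)=\int_0^1\nabla g(v+t(u-v))(u-v)\,dt$, whence $\|g(u)-g(v)\|\le\big(\sup_x\|\nabla g(x)\|\big)\|u-v\|$ and thus $L[g]\le\sup_x\|\nabla g(x)\|$ by \eqref{Def:L}. The reverse inequality follows by letting $u=x_0+sw\to x_0$ along a unit direction $w$ at an interior point $x_0$: the difference quotient tends to $\|\nabla g(x_0)w\|$, and taking the supremum over $w$ and then over $x_0$ (using continuity of $\nabla g$ to pass from interior points to all of $K$) gives $\sup_x\|\nabla g(x)\|\le L[g]$.

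Applying this with $g=I+hf$, whose Jacobian is $I+h\nabla f(x)$, yields $L[I+hf]=\sup_{x\in K}\|I+h\nabla f(x)\|$, so that by \eqref{eq:DefNonLN}
\[
M[f]=\lim_{h\to 0^+}\frac{\sup_{x\in K}\|I+h\nabla f(x)\|-1}{h}=\lim_{h\to 0^+}\sup_{x\in K}\psi(x,h),
\]
where I write $\psi(x,h):=\big(\|I+h\nabla f(x)\|-1\big)/h$. On the other hand, by \eqref{eq:DefLN}, $\lim_{h\to 0^+}\psi(x,h)=\nu[\nabla f(x)]$ for each fixed $x$, so the asserted identity is exactly the statement that the limit in $h$ and the supremum in $x$ commute.

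The main obstacle is precisely this interchange, and I would resolve it with a Dini-type argument. The key structural fact is that $h\mapsto\|I+hA\|$ is convex (a norm composed with an affine map) and equals $1$ at $h=0$; hence the slope $\psi(x,h)$ is non-decreasing in $h>0$ and decreases to $\nu[\nabla f(x)]$ as $h\to 0^+$. Consequently $\sup_x\psi(x,\cdot)$ is itself non-decreasing, its limit exists, and it dominates $\sup_x\nu[\nabla f(x)]$, giving ``$\ge$'' immediately. For the reverse bound I would use compactness of $K$: the reverse triangle inequality gives $|\psi(x,h)-\psi(y,h)|\le\|\nabla f(x)-\nabla f(y)\|$, so each $\psi(\cdot,h)$ is continuous, while the limit $x\mapsto\nu[\nabla f(x)]$ is continuous since $\nu[\cdot]$ is $1$-Lipschitz in the matrix. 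Dini's theorem then applies to the monotone pointwise convergence $\psi(\cdot,h)\downarrow\nu[\nabla f(\cdot)]$ on the compact set $K$, making the convergence uniform, and uniform convergence lets one pass the supremum through the limit, completing the proof. The only point requiring mild care is the lower-bound step at boundary points of $K$, which I would dispatch by continuity of $\nabla f$ together with the fact that a convex compact set is the closure of its interior.
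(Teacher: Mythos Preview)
The paper does not prove this lemma; it simply cites \cite[p.~672]{soderlind1984nonlinear}. Your argument is therefore a genuine proof where the paper offers only a reference, and the strategy you outline---reduce to $L[I+hf]=\sup_x\|I+h\nabla f(x)\|$ via the mean-value inequality, then interchange $\lim_{h\to0^+}$ and $\sup_x$ using monotonicity of the difference quotient and Dini's theorem---is sound and standard.

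One caveat: your final remark that ``a convex compact set is the closure of its interior'' is false without the additional hypothesis that the interior is nonempty (a line segment in $\mR^2$ is convex and compact with empty interior). In that degenerate case the identity $L[g]=\sup_x\|\nabla g(x)\|$ can fail, since the Lipschitz quotient in \eqref{Def:L} only probes directions tangent to $K$ while $\|\nabla g(x)\|$ is the full operator norm. This is really a gap in the lemma's stated hypotheses rather than in your argument, and in the paper's intended setting $K$ is a full-dimensional region, so it is harmless; but you should either add ``$K$ has nonempty interior'' as a standing assumption or note it explicitly when you invoke the closure-of-interior step.
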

This lemma gives an alternative way to compute the least upper bound logarithmic Lipschitz constant $M[f]$ and will be used for deriving the error bounds in Section~\ref{subsec:interaction}.

\section{Representing multi-agent systems by moments}\label{sec:moment_rep}
Consider a multi-agent system consisting of $N$ identical agents, and let $x_i(t)\in \mR^d$ denote the state of agent $i$ at time $t$ for $i=1,\ldots, N$.
Throughout we will assume that every agent $x_i(t)$ belongs to the compact set $K\subset \mR^d$ for $t \in [0, T]$.
The distribution of the agents can be described in a concise way by
a nonnegative measure $d\mu_{t}\in {\mathcal M}_+(K)$ as
\begin{equation}\label{eq:moment_rep}
d\mu_{t}(x) = \frac{1}{N}\sum_{i=1}^N \delta(x - x_i{(t)}) dx.
\end{equation}
This occupation measure is a time-dependent distribution which conveys all information about the current states of the agents in the system.
We will use an approximation of this distribution in order to avoid having to compute the dynamics of each individual agent, which would be too expensive when the number of agents $N$ is large.

%Consider a multi-agent system consisting of $N$ identical agents on a compact set $K\subset \mR^d$, and let $x_i(t)\in K$ denote the state of agent $i$ at time $t\in [0,T]$ for $i=1,\ldots, N$.
%%. Let $K\subset \mR^d$ be a compact set and let the state of agent $i$ be $x_i\in K$, for $i=1,\ldots, N$.
%%To avoid falling into the computation on the individual level, which is especially expensive when the number of individuals is huge, we introduce a nonnegative distribution to characterize the distribution of agents as
%The distribution of the agents can be described in a concise way by
%a nonnegative measure $d\mu_{t}\in {\mathcal M}_+(K)$ as
%\begin{equation}\label{eq:moment_rep}
%d\mu_{t}(x) = \frac{1}{N}\sum_{i=1}^N \delta(x - x_i{(t)}) dx.
%\end{equation}
%%where $\delta$ denotes the Dirac delta function. Note that such $d\mu\in {\mathcal M}_+(K)$ is a nonnegative distribution on $K$.
%This occupation measure is a time-dependent distribution which conveys all information about the current states of the agents in the system.
%We will use an approximation of this distribution in order to avoid having to compute the dynamics of each individual agent, which would be too expensive when the number of agents $N$ is large.

Let $\phi_k\in \mathbf C^1(K)$, for $k=1,\ldots, M$, be a set of kernel functions. The corresponding moments of distribution \eqref{eq:moment_rep} are then defined by
\begin{equation}\label{eq:def_moment}
m_k(t)=\int_K \phi_k(x)d\mu_{t}(x)=\frac{1}{N}\sum_{i=1}^N \phi_k(x_i(t))
\end{equation}
for $k=1,\ldots, M$.
In order to capture the collective behavior composed by all the individuals,
%we will in this work approximate
we investigate the dynamics of $d\mu_{t}(x)$ by considering approximate dynamics of the finite set of moments $\{m_k(t)\}_{k = 1}^M$. The approximated dynamics is then used to estimate the moments at a given time, and the occupation measure representing the particle distribution can be reconstructed accordingly by solving a moment matching problem.
%
% imposing \eqref{eq:def_moment}.
%via equation \eqref{eq:def_moment}.
%The approximation of the moment dynamics is % will be
%discussed in section~\ref{sec:model} and \ref{subsec:moment_dyn_approx}, and the reconstruction is % will be
%discussed in the following subsection and in section~\ref{subsec:reco}.% remaining part of this section.
In the following subsections we show how the dynamics of these moments can be approximated for different kinds of systems, and also derive bounds for the approximation errors.

\subsection{Modeling basic systems of agents}\label{sec:basicSystems}
We start with deriving the moment dynamics for systems where the dynamics of each individual is governed only by a spatial vector field.
The main purpose of this is to illustrate the theory, but it is also applicable to some applications such as crowd evacuation in a domain with obstacles \cite{ge2000new} and movement analysis for a particle accelerator \cite{wiedemann2015particle}.
In the next subsections we extend this framework to more general multi-agent systems.
%Let each of the individuals be driven by a spacial vector field as

Let the dynamics of each individual be governed by
\begin{equation}\label{Eq:BasicModels}
\dot x_i(t)=f(x_i(t)), \qquad i = 1, \ldots, N,
\end{equation}
where $f$ is Lipschitz continuous on $K$.
%where \reva{$f \in \mathbf C^1(K) $}.
%is a \reva{continuously differentiable} function.} %\jk{XX I think it makes sense to assume continuously differentiable functions throughout the paper. Otherwise one might have problems with uniqueness of solutions. XX}
Correspondingly, the dynamics of the moments %$\{m_k\}_{k = 1}^M$
satisfies
\begin{align}
\dot m_k(t) & = \frac{1}{N} \! \sum_{i=1}^N \frac{d \phi_k(x_i(t))}{d t} = \frac{1}{N} \! \sum_{i=1}^N \frac{\partial \phi_k(x_i(t))}{\partial x_i(t)}f(x_i(t)) \nonumber \\
&=\int_{x\in K} \frac{\partial \phi_k(x)}{\partial x}f(x)d\mu_{t}(x). \label{eq:MomentSysBasic}
\end{align}
Similar to the Frobenius-Perron and Koopman operator frameworks, the nonlinear dynamics is lifted to an infinite dimensional linear dynamics in terms of measures \cite{budivsic2012applied, lasota2013chaos, mauroy2016global, rowley2017model}. However, these operators are typically used for analyzing the dynamics of one system, whereas we here utilize the lifting to express the moment dynamics as a linear function of the representing measure.

If the function $(\partial_x \phi_k) f(x)$ is well approximated by a linear combination $\sum_{\ell = 1}^{M}\! a^k_{\ell}\phi_\ell(x)$, where $a^k_{\ell}\!\!\!\in\! \mathbb{R}$ are some coefficients, then by the linearity of the integral and definition \eqref{eq:def_moment}, the dynamics of moment system \eqref{eq:MomentSysBasic} is approximated by
\[
\dot{m}_k(t)=\!\!\int_{x\in K} \frac{\partial \phi_k}{\partial x}fd\mu_{t}\approx\!\! \int_{x\in K} \sum_{\ell = 1}^{M} a^k_{\ell}\phi_\ell d\mu_{t}=\!\!\sum_{\ell = 1}^{M} a^k_{\ell}m_\ell(t).
\]
Thus the overall system can be approximated by the linear system
\begin{equation}\label{eq:ApproSysBasic}
\dot{\overline{m}}(t) =
\begin{bmatrix}
a^1_{1} & \cdots & a^1_{M} \\
\vdots  &        & \vdots  \\
a^M_{1} & \cdots & a^M_{M}
\end{bmatrix}
\! \!
\begin{bmatrix}
\overline m_1(t) \\ \vdots \\ \overline m_M(t)
\end{bmatrix}
\! =: A\overline m(t),
\end{equation}
where $\overline m(t)=(\overline m_1(t), \dots, \overline m_M(t))\in  \mR^{M}$ is a vector of the approximate moments.
The accuracy of the model \eqref{eq:ApproSysBasic} and the amount of information it carries about the multi-agent nonlinear systems \eqref{Eq:BasicModels} depend on the number of moments and on the selected kernel functions $\phi_k$ (see Section~ \ref{subsec:inverse}).
Denote the approximation error of function $(\partial_x \phi_k)f(x)$ by
\begin{equation}\label{eq:errorBasic}
\varepsilon_k(x) := \frac{\partial \phi_k(x)}{\partial x} f(x) - \sum_{\ell=1}^M  a^k_{\ell}\phi_\ell(x).
\end{equation}
%Introducing the notation $\hat{\lambda}(A) := \lambda_{\text{max}}(\tfrac{A + A^T}{2})$,
Before stating the first main result, we recall the basic assumption that allows for this derivation.
\begin{assumption}\label{ass:inK}
Assume that every agent $x_i(t)$ belongs to the compact set $K\subset \mR^d$ for $t \in [0, T]$.
\end{assumption}
This assumption holds for many cases of interest.  %Also for systems that are not bounded such a set $K$ may be derived based on bounds on the Lipschitz constant of $f$.
For example for a given initial particle distribution and time $T$, such a set $K$ can always be found provided that the system $f$ is globally Lipschitz \cite[Thm.~2.3]{khalil1992nonlinear}.
Another example is when each subsystem is equibounded (see, e.g., \cite[Def.~1.5.1]{kato1996stability}), where the assumption holds for $T\! =\! \infty$. Further comments on this can be found in Remark~\ref{rem:whenKbounded}, in the end of this section.
The following result gives the error bound for moment system \eqref{eq:ApproSysBasic}.
\begin{thm}\label{thm:ErrorBoundBasic}
Assume that Assumption~\ref{ass:inK} holds. Let $\overline{m}(t)$ and $m(t)$ be the solutions of the approximate moment dynamics \eqref{eq:ApproSysBasic} and the true moment dynamics \eqref{eq:MomentSysBasic} respectively.
Then for $t\in [0,T]$, the difference of the two solutions $\Delta m(t)=m(t)-\overline{m}(t)$ is bounded by
\[
  \|\Delta m(t)\| \leq \|\Delta m(0)\| e^{t\nu [A]} + \frac{e^{t \nu [A]}-1}{\nu [A]} \sqrt{\sum_{k=1}^M \max_{x\in K} \varepsilon_k(x)^2},
\]
if $\nu [A] \neq 0$, and by
\[
  \|\Delta m(t)\| \leq \|\Delta m(0)\| + t\sqrt{\sum_{k=1}^M \max_{x\in K} \varepsilon_k (x)^2}
\]
if $\nu [A] = 0$, where  $\varepsilon_k(x)$ is defined in \eqref{eq:errorBasic}.
\end{thm}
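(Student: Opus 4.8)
The plan is to recognize that the true moment vector $m(t)$ obeys the \emph{same} linear dynamics as $\overline m(t)$ but with an additive forcing term generated by the approximation error, and then to control this forcing through a variation-of-constants argument combined with Lemma~\ref{lem:expBound}.

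First I would rewrite the true dynamics \eqref{eq:MomentSysBasic} in closed form. Inserting the definition \eqref{eq:errorBasic} into the integrand gives $\frac{\partial \phi_k}{\partial x} f = \sum_\ell a^k_\ell \phi_\ell + \varepsilon_k$, so integrating against $d\mu_t$ and using linearity of the integral together with \eqref{eq:def_moment} yields
\[
\dot m_k(t) = \sum_\ell a^k_\ell\, m_\ell(t) + \int_K \varepsilon_k(x)\,d\mu_t(x).
\]
Collecting these into a vector equation, $\dot m(t) = A\,m(t) + e(t)$, where $e(t)$ has entries $e_k(t) = \int_K \varepsilon_k(x)\,d\mu_t(x)$. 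Subtracting $\dot{\overline m}(t) = A\overline m(t)$ then produces the error dynamics
\[
\dot{\Delta m}(t) = A\,\Delta m(t) + e(t),
\]
a linear inhomogeneous system driven by the error moments $e(t)$.

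Next I would solve this system by Duhamel's formula, $\Delta m(t) = e^{At}\Delta m(0) + \int_0^t e^{A(t-s)} e(s)\,ds$, take norms, and apply the triangle inequality. The matrix exponentials are bounded using Lemma~\ref{lem:expBound}: since $t \geq 0$ and $t - s \geq 0$ for $s\in[0,t]$, we have $\|e^{At}\| \le e^{t\nu[A]}$ and $\|e^{A(t-s)}\| \le e^{(t-s)\nu[A]}$. For the forcing term, the key observation is that the occupation measure \eqref{eq:moment_rep} is a probability measure (total mass one), so each component satisfies $|e_k(s)| \le \int_K |\varepsilon_k(x)|\,d\mu_s(x) \le \max_{x\in K}|\varepsilon_k(x)|$; taking the Euclidean norm then gives the uniform-in-$s$ bound $\|e(s)\| \le \sqrt{\sum_k \max_{x\in K}\varepsilon_k(x)^2}$.

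Finally I would substitute these estimates and evaluate the remaining integral. Pulling the uniform bound on $\|e(s)\|$ out of the integral leaves $\int_0^t e^{(t-s)\nu[A]}\,ds$, which the substitution $u = t-s$ converts into $\int_0^t e^{u\nu[A]}\,du = (e^{t\nu[A]}-1)/\nu[A]$ when $\nu[A]\neq 0$ and into $t$ when $\nu[A]=0$; combining the two terms reproduces both displayed bounds. I do not anticipate a genuine obstacle, as this is a standard perturbed-linear-system estimate. The only points requiring care are ensuring the logarithmic-norm bound is applied to nonnegative time arguments (which holds throughout $[0,t]$) and recognizing that the total mass of $d\mu_s$ equals one, so that the pointwise maxima of $\varepsilon_k$ dominate the error moments; the appearance of the Euclidean norm in the final bound fixes the vector norm used throughout.
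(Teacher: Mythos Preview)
Your proposal is correct and follows essentially the same approach as the paper: derive the error dynamics as the inhomogeneous linear system $\dot{\Delta m}=A\,\Delta m+E(t)$, apply the variation-of-constants formula, bound the matrix exponentials via Lemma~\ref{lem:expBound}, bound $\|E(s)\|$ uniformly using that $d\mu_s$ has unit mass, and integrate $\int_0^t e^{(t-s)\nu[A]}\,ds$ to obtain the two cases. The paper's argument is identical in structure and detail.
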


\begin{proof}
See appendix~\ref{sec:app1}.
\end{proof}

%
%As noted before, if the Euclidean norm is used for the errors, then
%$\nu_2[A] = \lambda_{\rm max}( A+A^*)/2$.

For $\nu[A] < 0$, we get the following time-independent bound.
\begin{cor}\label{cor:ErrorBoundBasic}
Under the conditions in Theorem~\ref{thm:ErrorBoundBasic}, if $\nu[A]<0$ and $\Delta m(0)=0$, then for $t\in [0,T]$ the error is bounded by
  \[
        \|\Delta m(t)\| \leq -\frac{1}{\nu [A]} \sqrt{\sum_{k=1}^M \max_{x\in K} \varepsilon_k (x)^2}.
  \]
\end{cor}

\begin{remark}
Note that Theorem~\ref{thm:ErrorBoundBasic} can be generalized, using any vector norm $\|  \cdot \|_* $ and the corresponding logarithmic norm $\nu_{*}[A]$, see, e.g. \cite{soderlind2006logarithmic}. In fact, Theorem~\ref{thm:ErrorBoundBasic} is still valid if one simply changes the vector norms and the logarithmic norms accordingly, and also changes $\sqrt{\sum_{k} \max_{x\in K} \varepsilon_k (x)^2}$ to %
$\Big\| \, \big[    \| \varepsilon_k (x)\|_{\Linf{K}}  \big]_{k=1}^M  \, \Big\|_* $.
\end{remark}

Theorem~\ref{thm:ErrorBoundBasic}
%and Corollary~\ref{cor:ErrorBoundBasic}
indicates that the accuracy of the moment-based model depends not only on the instantaneous precision in the approximation of the dynamics, as given by $\varepsilon_k$, but also on the propagation of the approximation error in time.  The logarithmic norm of the resulting system matrix $A$ gives a bound on this propagation.
This implies that a trade-off between accuracy and stability of the approximate moment dynamics needs to be taken into account (see also Section~\ref{subsec:1D_example}).

\subsection{Modeling multi-agent systems with interactions}\label{subsec:interaction}
In multi-agent systems, besides a spatial vector field, the interactions between each pair of individuals often play an essential role in its collective behavior \cite{altafini2015predictable, yang2015shaping}.
%In systems where there is also interactions between the agents, it turns out to be natural to approximate the moments using quadratic dynamics. To see this, let the agent dynamics be given by
To account for this in the model, consider agents governed by the dynamics
\begin{equation}\label{eq:nonInteraction}
\dot x_i(t)=\frac{1}{N}\sum_{j=1}^N g(x_i(t),x_{j}(t)),
\end{equation}
where $g (\cdot, \cdot)$ is a Lipschitz continuous function in both arguments.
%is continuously differentiable in both arguments.
Then the exact moment dynamics %corresponding to kernels $\{\phi_k(x)\}_{k = 1}^M$
is given by
\begin{align}
\dot m_k(t) & = \frac{1}{N}\sum_{i=1}^N \frac{d \phi_k(x_i(t))}{d t} \nonumber \\
& = \frac{1}{N}\sum_{i=1}^N \frac{\partial \phi_k(x_i(t))}{\partial x_i(t)}\frac{1}{N}\sum_{j=1}^N g(x_i(t),x_{j}(t)) \nonumber \\
& = \int_{x\in K} \int_{y\in K}\frac{\partial \phi_k(x)}{\partial x}g(x,y)d\mu_{t}(x)d\mu_{t}(y) \label{eq:true_dyn}.
\end{align}
Similarly to the previous case, provided that we can approximate the function $(\partial_x \phi_k(x)) g(x,y)$ in terms of the kernel functions $\{\phi_j(x)\phi_\ell(y)\}_{j,\ell = 1}^M$ on $(x, y) \in K^2$, i.e., %in $(x, y) \in K^2$ as
\begin{equation}\label{eq:errorInter0}
\frac{\partial \phi_k(x)}{\partial x}g(x,y) \!\approx\!\! \sum_{\ell, j=1}^M \!\Belement^k_{\ell, j} \phi_\ell(x)\phi_j(y), \mbox{ for } (x,y)\in K^2,
\end{equation}
by the linearity of the integral and \eqref{eq:def_moment}, the moment dynamics $\dot{m}_k(t)$ can be approximated as
\begin{align}\label{eq:approx_dyn}
\dot{\overline{m}}_k(t) = \sum_{\ell, j=1}^M \Belement^k_{\ell, j} m_\ell(t) m_j(t) = m(t)^T B_k{m}(t),
\end{align}
where $B_k = [b_{\ell,j}^k]_{\ell, j = 1}^M$.
The general nonlinear interaction \eqref{eq:nonInteraction} is thus approximated by a simple quadratic system, and the approximation error in \eqref{eq:errorInter0} is denoted by
\begin{equation}\label{eq:errorInter}
\varepsilon_k (x,y) :=\frac{\partial \phi_k(x)}{\partial x}g(x,y) - \! \sum_{\ell, j=1}^M \Belement^k_{\ell, j} \phi_\ell(x)\phi_j(y). %, \; (x,y)\in K^2.
\end{equation}
To bound the approximation error of the moment dynamics, it can be shown that if the approximate moment trajectory is contained in the compact and convex set $D$, then the logarithmic norm of the system is bounded by
\begin{equation}\label{eq:BetaInter}
  \Bbound=\max_{m \in D} \sum_{\ell=1}^M  \, \nu[ m_{\ell}\widetilde{B}_{\ell} ],
\end{equation}
where $\widetilde{B_\ell}:=[\Belement^i_{\ell,j}+\Belement^i_{j,\ell}]_{i,j=1}^M$.
A natural choice is to let $D$ be the set of all possible moment sequences for agents in $K$, i.e.,  $D=\Ccal := \left\{ m \in \mR^{M} \mid \eqref{eq:moments}, \;  d\mu\in {\mathcal M}_+(K), \; \int_K d\mu = 1 \right\}$. %Note that $|m_k|\le \max_{x\in K}|\phi_k(x)|$, which gives an easily computable bound on $\Ccal$.
%By compactness of $\Ccal$ we have that $\Bbound$ is finite.
\begin{assumption}\label{ass:inD}
Assume that the approximate moment trajectory $\overline m(t)$, i.e., the solution to \eqref{eq:approx_dyn}, belongs to $D$ for $t\in [0, \overline{T}]$.
\end{assumption}

This gives the following theorem. %{\color{red}{[No def. for ${\mathfrak C}_+$]}}

\begin{thm}\label{thm:ErrorBoundInter}
Assume that Assumption~\ref{ass:inK} and Assumption~\ref{ass:inD} hold, and that the support set $K$ is convex.
Let $m(t)$ be the solution of the true moment dynamics \eqref{eq:true_dyn}
and let $\overline{m}(t)$ be the solution of the approximate dynamics \eqref{eq:approx_dyn}.
Then for $t\in [0, \min(T, \overline{T})]$, the norm of the error trajectory $\Delta m(t)=m(t)-\overline{m}(t)$ is bounded by
\[
  \|\Delta m(t)\| \leq \|\Delta m(0)\| e^{\Bbound t} + \frac{e^{\Bbound  t}-1}{\Bbound} \sqrt{\sum_{k=1}^M \max_{x,y\in K} \varepsilon_k (x,y)^2},
\]
if $\Bbound\neq 0$, and by
\[
  \|\Delta m(t)\| \leq \|\Delta m(0)\| + t\sqrt{\sum_{k=1}^M \max_{x,y\in K} \varepsilon_k (x,y)^2},
\]
if $\Bbound= 0$, where  $\varepsilon_k (x,y)$ is defined in \eqref{eq:errorInter} and $\beta$ in \eqref{eq:BetaInter}.
%is given by \eqref{eq:BetaInter}.
\end{thm}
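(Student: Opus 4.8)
The plan is to mirror the argument behind Theorem~\ref{thm:ErrorBoundBasic}, replacing the constant logarithmic norm of the linear case by the logarithmic Lipschitz constant $M[F]$ of the quadratic vector field $F(m):=(m^TB_1m,\dots,m^TB_Mm)^T$, and then controlling $M[F]$ by $\beta$. First I would rewrite the true dynamics \eqref{eq:true_dyn} as a perturbation of the approximate one: substituting \eqref{eq:errorInter} into \eqref{eq:true_dyn} and using that $\int_K d\mu_t=1$ gives $\dot m_k = m^TB_k m + E_k(t)$ with $E_k(t):=\int_K\int_K \varepsilon_k(x,y)\,d\mu_t(x)\,d\mu_t(y)$, i.e.\ $\dot m = F(m)+E(t)$, whereas the approximate system \eqref{eq:approx_dyn} is simply $\dot{\overline m}=F(\overline m)$. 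Hence $\tfrac{d}{dt}\Delta m = \bigl(F(m)-F(\overline m)\bigr)+E(t)$.

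The central step is to bound $M[F]$ over $D$ by $\beta$. I would differentiate $F_i=\sum_{\ell,j}b^i_{\ell,j}m_\ell m_j$ entrywise and, after relabeling summation indices, verify the identity $\nabla F(m)=\sum_{\ell} m_\ell \widetilde B_\ell$ with $\widetilde B_\ell$ exactly as in \eqref{eq:BetaInter}. Since $F$ is polynomial, hence $\mathbf C^1$, and $D$ is convex and compact, Lemma~\ref{lem:LNormforC1} yields $M[F]=\sup_{m\in D}\nu[\nabla F(m)]=\sup_{m\in D}\nu\bigl[\sum_\ell m_\ell\widetilde B_\ell\bigr]$. Subadditivity of the logarithmic norm, $\nu[A+B]\le\nu[A]+\nu[B]$, then gives $\nu\bigl[\sum_\ell m_\ell\widetilde B_\ell\bigr]\le\sum_\ell\nu[m_\ell\widetilde B_\ell]$, so that $M[F]\le\max_{m\in D}\sum_\ell\nu[m_\ell\widetilde B_\ell]=\beta$.

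Next I would convert this into a scalar differential inequality for $\|\Delta m(t)\|$. Expanding $\|\Delta m(t+h)\|=\|(I+hF)(m)-(I+hF)(\overline m)+hE(t)+o(h)\|$, applying the triangle inequality together with the definitions \eqref{Def:L}--\eqref{eq:DefNonLN} (so that $L[I+hF]\le 1+hM[F]+o(h)$ bounds the distance of the images of $m(t)$ and $\overline m(t)$), and letting $h\to 0^+$, I obtain the upper Dini estimate $D^+\|\Delta m\|\le M[F]\|\Delta m\|+\|E(t)\|\le \beta\|\Delta m\|+\|E(t)\|$. The perturbation is bounded using that $d\mu_t$ is a probability measure on $K$, giving $|E_k(t)|\le\max_{x,y\in K}|\varepsilon_k(x,y)|$ and thus $\|E(t)\|\le\mathcal E$ with $\mathcal E:=\sqrt{\sum_k\max_{x,y\in K}\varepsilon_k(x,y)^2}$. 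A standard comparison (Gronwall) argument applied to $D^+\|\Delta m\|\le\beta\|\Delta m\|+\mathcal E$ then yields the two stated bounds, according to whether $\beta\neq 0$ or $\beta=0$.

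I expect the main obstacle to be the domain bookkeeping in the Dini-derivative step: the logarithmic Lipschitz estimate $L[I+hF]\le 1+h\beta+o(h)$ is only valid over the convex set $D$, so I must guarantee that both $m(t)$ and $\overline m(t)$, and the whole segment joining them, remain in $D$. Here $\overline m(t)\in D$ holds by Assumption~\ref{ass:inD}, the true moments satisfy $m(t)\in\Ccal$ since they arise from a genuine probability measure supported on the convex set $K$, and convexity of $D$ (the natural choice $D=\Ccal$) keeps the connecting segment inside $D$, so that $\nu[\nabla F]\le\beta$ holds along it. The remaining ingredients, namely the Jacobian computation, the subadditivity bound, and the final comparison step, are routine and parallel the proof of Theorem~\ref{thm:ErrorBoundBasic}.
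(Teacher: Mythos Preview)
Your proposal is correct and follows essentially the same route as the paper: rewrite the true dynamics as $\dot m=F(m)+E(t)$, derive the Dini inequality $D^+\|\Delta m\|\le M[F]\,\|\Delta m\|+\|E(t)\|$ via the definition of $L[I+hF]$, bound $M[F]$ by $\beta$ through Lemma~\ref{lem:LNormforC1} together with the Jacobian identity $\nabla F(m)=\sum_\ell m_\ell\widetilde B_\ell$ and subadditivity of $\nu[\cdot]$, and finish with Gr\"onwall. Your explicit attention to the domain bookkeeping (that both $m(t)$ and $\overline m(t)$, and hence the segment between them, must lie in the convex set $D$ for Lemma~\ref{lem:LNormforC1} to apply) is in fact more careful than the paper, which leaves this point implicit.
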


\begin{proof}
See appendix~\ref{sec:app2}.
\end{proof}

Moreover, similar to Corollary~\ref{cor:ErrorBoundBasic}, for a system with negative factor $\beta$
a time-independent bound can easily be derived.
%the following simplified bound of the error can be used.
%\begin{cor}
%Under the same assumptions in Theorem~\ref{thm:ErrorBoundInter},
%if $\Bbound<0$ and $\Delta m(0)=0$, then $\Delta m(t)$ is bounded by
%  \[
%        \|\Delta m(t)\| \leq -\,\frac{1}{\Bbound} \sqrt{\sum_{k=1}^M \max_{x,y\in K} \varepsilon_k (x,y)^2},
%  \]
%  for any $t \in [0, \min(T, \overline{T})]$.
%\end{cor}

\subsection{Modeling multi-agent systems with leaders}
In some applications the agents in the multi-agent system may not be identical but instead be %divided into several types of agents \jk{[add refs]}.
heterogeneous.
For example,
 some agents may be equipped with different on-board sensors, have different movement capability, or have access to global information.
%For many applications the individuals in multi-agent systems may not be homogeneous but be hierarchical in the sense that they are equipped with different onboard sensors, different movement capability or whether they have access to global information.
The most commonly used setup to specify such architectures is the so-called leader-follower setting \cite{gustavi2010sufficient, yang2015shaping},
to which we devote the following subsection.
%thus we devote this subsection to extend the proposed approach for such scenarios.

%\cite{gustavi2010sufficient}.
%, in which some agents called leaders distinguishing with other agents, called followers, are available to the global control target and take the role of leading followers to reach this target.

In the leader-follower setting a few agents, called leaders, are distinguished from the remaining  agents, called followers. Let the dynamics of the leaders be governed by
\[
   \dot{y}_j(t) = f_L(y_j(t), u_j(t)),
\]
for $j= 1, \dots, N_L$. Here  $y_j \in K$ is the state of leader $j$, and $u_j$ is a control signal which can involve global information of the system, e.g., states of all agents or control goal of the overall system. Furthermore, the dynamics of the followers are given by
\begin{equation}\label{eq:ModelsWithLeaders}
    \dot{x}_i (t) = h(x_i(t), Y(t)),
\end{equation}
where $h: \mR^d \times \mR^{N_L} \to \mR^d$ is continuously differentiable, and $Y=\begin{pmatrix}
y_1, \dots, y_{N_L}
\end{pmatrix}.$
%y_1^T, \dots, y^T_{N_L}]^T$.
If
%\reva{\sout{, for $(x, Y) \in K \times K^{N_L}$,}}
it is possible to approximate the functions
%Suppose we have a good approximation of the continuous function
$(\partial_x \phi_k(x))h(x, Y)$ in terms of the separable sum
$\sum_{\ell=1}^M {c}^k_{\ell}(Y) \phi_\ell(x)$, where ${c}^k_{\ell}\in \mathbf{C}^1(K^{N_L})$, then the dynamics of moment $k$ can be approximated as
%\begin{equation}\label{eq:MomentSysLeader}
%  \rev{\dot{m}_k}(t)
%  = \! \! \int_{x\in K} \!\!\!\!\! \frac{\partial \phi_k(x)}{\partial x}  h(x,Y) d\mu_{ t}(x)
%  \approx \sum_{\ell=1}^\rev{M} \! {c}^k_{\ell}(Y)  m_\ell(t).
%\end{equation}
\begin{align}\label{eq:MomentSysLeader}
  \dot{m}_k(t)
  %&=& \frac{1}{N}\sum_{i=1}^N \frac{\partial \phi_k(x_i(t))}{\partial x_i(t)} h(x_i, y_1, \dots, y_{N_L})\nonumber \\
  =&\int_{x\in K} \frac{\partial \phi_k(x)}{\partial x}  h(x,Y) d\mu_{ t}(x)\\
  \approx& \sum_{\ell=1}^M {c}^k_{\ell}(Y)  m_\ell(t). \nonumber
\end{align}
The functions ${c}^k_{\ell}\in \mathbf{C}^1(K^{N_L})$ represent the dependence on the states of all leaders, and the approximation results in the error
\begin{equation}\label{eq:errorLeader0}
\varepsilon_k (x,Y)=\frac{\partial \phi_k(x)}{\partial x} h(x, Y) - \sum_{\ell=1}^M {c}^k_{\ell}(Y) \phi_\ell(x).
\end{equation}
Therefore the dynamics of the estimated moments is written accordingly as
\begin{equation}\label{eq:ApproSysLeader0}
\dot{\overline{m}}(t)
=\!
\begin{bmatrix}c^1_{1}(Y)&\!\!\cdots\!\!&c^M_{1}(Y)\\\vdots&&\vdots\\c^M_{1}(Y)&\!\!\cdots\!\!& c^M_{M}(Y)\end{bmatrix}
\!\!\!\begin{bmatrix}\overline{m}_1(t)\\\vdots\\ \overline{m}_M(t)\end{bmatrix}
\!\!=: \! C(Y)\overline{m}(t),
\end{equation}
where $\overline m(t)=(\overline m_1(t), \dots, \overline m_M(t))\in \mR^{M}$.
Note that since ${c}^k_{\ell}(Y)\in \mathbf C^1$,
%for any compact set $\Omega \subset \Ccal$,
the solution $\overline{m}(t)$ to system  \eqref{eq:ApproSysLeader0} exists and is unique in any time interval once the moment trajectory evolves in the compact set $\Ccal$.

\begin{remark}\label{rmk:L2giveExpression}
The existence of $\mathbf C^1$-functions ${c}^k_{\ell}(Y)$ in approximation \eqref{eq:errorLeader0} strongly relies on the particular approximation used. For instance, $L_2$ approximation (see \ref{sec:alg}-A) can provide an analytical expression of ${c}^k_{\ell}(Y)$, where ${c}^k_{\ell} \in \mathbf C^1$ if $h\in \mathbf C^1$.
\end{remark}
%\ar{How do we know that the ${c}^k_{\ell}(Y)$s are $\mathbf C^1$-functions? It seems plausible that they should at least be $\mathbf C$, but should we prove it?}

Let $\tau := \max_{Y \in K^{N_L}} \{\nu[C(Y)]\}$. For any $Y(t) \in K^{N_L}$, by Theorem~\ref{thm:ErrorBoundBasic} the error of the approximate moment $\Delta m(t)=m(t)-\overline{m}(t)$ is bounded by
\begin{equation}\label{eq:leader_bounds}
  \|\Delta m(t)\| \!\leq \!\|\Delta m(0)\| e^{\tau t}\!+\! \frac{e^{ \tau t}\!-\!1}{\tau}\! \sqrt{\sum_{k=1}^M
  \max_{\substack{x\in K \\  Y \in K^{N_L}}} \!\!\!\varepsilon_k (x,Y)^2},
\end{equation}
if $\tau \neq 0$, where
%\ar{\sout{$\tau \neq 0$, where $\tau=\max_{Y \in K^{N_L}} \{\nu[C(Y)]\}$}}
%\ar{$ 0 \neq \tau := \max_{Y \in K^{N_L}} \{\nu[C(Y)]\}
%$}%
%,
%\ar{and where}
$m(t)$ and $\overline{m}(t)$  are the solutions of the systems \eqref{eq:MomentSysLeader} and  \eqref{eq:ApproSysLeader0}, respectively.
%, and
%\begin{equation}\label{eq:tau}
%\tau=\max_{Y \in K^{N_L}} \{\nu[C(Y)]\}.
%\end{equation}

\begin{remark}\label{rkm:controlProblem}
By means of model \eqref{eq:ApproSysLeader0}, the \emph{tracking control problem} for the approximate moments can be formulated as follows: find a control $u_j$, for $j=1, \dots, N_L$, such that for the closed-loop system
\begin{align*}
  % \nonumber to remove numbering (before each equation)
  \dot{y}_j&=f_L(y_j, u_j), \\
  \dot{\overline{m}} &= C(Y) \overline{m},
\end{align*}
%where $j=1, \dots, N_L$,  %$Y=[y_1^T, \dots, y^T_{N_L}]^T$.
the moments $\overline{m}(t)$ track a reference signal $m_r(t)$.
%, or the moments trajectory fulfills some optimality index.
%Then solve the tracking problem, output regulation problem, or MPC under the final constraint $m(T) \in S_f$.
\end{remark}

%Although as stated in Remark~\ref{rmk:L2giveExpression} the $L_2$ approximation can give an analytical expression for coefficient $C(Y)$,

In the case with multiple leaders, the domain of the function $C(Y)$ is high-dimensional and numerical computations are intractable. In order to handle this case we introduce additional assumptions on the follower dynamics, i.e., we assume that all the leaders have an identical effect on the followers.
\begin{assumption}\label{Ass1}
In the followers' dynamics \eqref{eq:ModelsWithLeaders}, the impact of each leader is additive and governed by a same law, i.e.,  the function $h$ admits the form $h(x, Y)=\sum_{j=1}^{N_L} \eta(x, y_j)$.%, \rev{where $Y=\begin{pmatrix} y_1, \dots, y_{N_L} \end{pmatrix}$}.
\end{assumption}

Under Assumption~\ref{Ass1}, the matrix-valued function $C(Y)$ in dynamics \eqref{eq:ApproSysLeader0} can be rewritten into a separable sum as
\begin{equation*}%\label{eq:ApproSysLeader0.5}
\dot{\overline{m}} (t)
=\sum_{j=1}^{N_L} \Gamma(y_j) \overline m(t),
\end{equation*}
where matrix $\Gamma(y)\!=\![\gamma_\ell^k(y)]_{k,\ell=1}^M$ with entries obtained from the approximation $(\partial_x \phi_k(x)) \eta(x, y) \!\approx\! \sum_{\ell=1}^M \gamma^k_{\ell}(y) \phi_\ell(x)$, for any $x, y \in K$.

Next, in order to avoid the variable dependence of $\gamma^k_{\ell}(y)$ on $y$, we introduce a basis $\{\psi_r(y)\}_{r=1}^{M_L} \subset \mathbf C(K)$. Suppose that we have a good approximation for the two-variable function $(\partial_x \phi_k(x)) \eta(x, y)$ by the functions $\{ \psi_r(y)\phi_{\ell}(x) \}$, i.e., that we have a small approximation error
\begin{equation}\label{eq:ApproSysLeader1}
\varepsilon_k(x,y) := \frac{\partial \phi_k(x)}{\partial x} \eta(x, y) - \sum_{\ell=1}^M \sum_{r=1}^{M_L} \gamma^k_{\ell,r}\psi_r(y) \phi_\ell(x).
\end{equation}
Then the true moment dynamics  \eqref{eq:MomentSysLeader} can be approximated as
%the dynamics \eqref{eq:ApproSysLeader0}  reads
\begin{equation}\label{eq:ApproSysLeader2}
\dot{\overline{m}} (t)
=\sum_{j=1}^{N_L}  \sum_{r=1}^{M_L}  \psi_r(y_j)\Gamma_r \overline m(t),
\end{equation}
where $\Gamma_r:=[\gamma_{\ell,r}^k]_{k,\ell=1}^M$.
In the estimate dynamics \eqref{eq:ApproSysLeader2}, the approximation coefficients $\gamma_{\ell,r}^k$ are independent of  the leaders' positions and only depend on the functions $\eta(x,y)$,
%\ar{and $\{\psi_r(y)\}_{r=1}^{M_L}$},
and the kernels selected, which means that they can be computed off-line.
 This gives a similar error bound as that in \eqref{eq:leader_bounds}, namely
\begin{equation}\label{eq:leader_bounds_simplified}
  \|\Delta m(t)\| \!\leq \! \|\Delta m(0)\| e^{\tau t} \!+ \!N_L \frac{e^{ \tau t} \!- \!1}{\tau} \!\sqrt{ \sum_{k=1}^M
  \max_{x,y \in K} \varepsilon_k(x,y)^2},
\end{equation}
if $\tau \!\neq\! 0$, where $\varepsilon_k(x,y)$ is the approximation error in \eqref{eq:ApproSysLeader1} and
\begin{equation}\label{eq:tau}
\tau=\max_{Y \in K^{N_L}} \left\{ \nu\left[\sum_{j=1}^{N_L}  \sum_{r=1}^{M_L}  \psi_r(y_j)\Gamma_r\right] \right\}.
\end{equation}

%This give a similar error bound as that in \eqref{eq:leader_bounds}, by simply chaining the approximation error $\varepsilon^2_k(x,Y)$ for the one in \eqref{eq:ApproSysLeader1}, and letting $\tau=\max_{y \in K} \{\nu[\sum_{j=1}^{N_L}  \sum_{r=1}^{M_L}  \psi_r(y_j)\Gamma_r]\}$.

Note that one is free to choose the basis $\{\psi_r(y)\}_{r=1}^{M_L}$  from any family of continuous functions in $\mathbf C(K)$, and thus we can select them differently from $\{\phi_k\}_k$.
%Moreover, in the estimated dynamics \eqref{eq:ApproSysLeader2}, the approximation coefficients $\gamma_{\ell,r}^k$ are independent of  the leaders' positions and only depends on the function $\eta(x,y)$, which means that they can be computed off-line.

%implies we are able to compute them off-line.

%\ar{You do not know the functions $\tilde{c}_\ell^k(y)$ that you wish to approximate, since you would then have to solve the first approximation problem. This makes the current construction complicated. Can one go straight to saying: ``assume that $\frac{\partial \phi_k(x)}{\partial x} \widetilde h(x, y) \approx \sum_{\ell=1}^M \sum_{r=1}^{M_L} \widetilde c^k_{\ell,r}\psi_r(y) \phi_\ell(x)$, for some basis functions $\{\psi_r(y)\}_{r=1}^{M_L}$''?}

%
%\begin{itemize}
%  \item How to compute  $\gamma^k_{\ell}(\cdot)$? If
%        The coefficient $\gamma^k_{\ell}(y)$ can be approximated under some basis as $\gamma^k_{\ell}(y)=\sum_{j=1} \gamma^k_{\ell,j}\psi_j(y)$.
%        How to choose basis $\psi_j(\cdot)$?
%\end{itemize}

\subsection{Combining basic dynamics, interaction, and leader terms}
Now, consider the multi-agent system where each agent is governed by
\begin{equation}\label{eq:dynamics_combined}
\dot x_i(t)= f(x_i) + \frac{1}{N}\sum_{j=1}^N g(x_i,x_{j}) + \sum_{j=1}^{N_L} \eta(x_i, y_j),
\end{equation}
where $f$ is a spatial vector field as in \eqref{Eq:BasicModels}, $g$ is the interaction term as in \eqref{eq:nonInteraction}, and $h(x, Y) = \sum_{j=1}^{N_L} \eta(x, y_j)$ is the leaders influence as in
Assumption~\ref{Ass1}.
%\eqref{eq:ModelsWithLeaders}.
To derive a moment-based model for this system one can perform the function approximations to minimize the errors \eqref{eq:errorBasic}, \eqref{eq:errorInter}, and
\eqref{eq:ApproSysLeader1}
%\eqref{eq:errorLeader0}
separately.
Introducing the notation $\mB(m):=(m^T B_1^Tm, \dots, m^T B_M^Tm)$, the resulting moment dynamics would be
\begin{equation}\label{eq:approx_dyn_all}
\dot{\overline{m}}(t)
= A\overline{m}(t) + \mB(\overline{m}) + \sum_{j=1}^{N_L}  \sum_{r=1}^{M_L}  \psi_r(y_j)\Gamma_r\overline{m}(t).
\end{equation}

Similar to Theorems~\ref{thm:ErrorBoundBasic} and \ref{thm:ErrorBoundInter}, the error of the moments can be bounded as in the following corollary. Denote  $\varepsilon_k(x,y,Y)\! := \!\varepsilon_k^{f}(x)\! +\!\varepsilon_k^{g}(x,y) \!+ \!\sum_{j=1}^{N_L} \varepsilon_k^{\eta}(x,y_j)$, where the individual terms are defined by \eqref{eq:errorBasic}, \eqref{eq:errorInter} and \eqref{eq:ApproSysLeader1}, respectively.

\begin{cor}
%Assume that Assumption~\ref{ass:inK} and Assumption~\ref{ass:inD} hold, and that the support set $K$ is convex.
Under the conditions in Theorem~\ref{thm:ErrorBoundInter},
%let $m(t)$ be the solution of true moment dynamics, and let $\overline{m}(t)$  be the solution of the approximate system \eqref{eq:approx_dyn_all}. Then
for $t\in [0, \min(T, \overline{T})]$, the norm of the error trajectory $\Delta m(t)=m(t)-\overline{m}(t)$ is bounded by
\[
  \|\Delta m(t)\| \leq \|\Delta m(0)\| e^{\zeta t} + \frac{e^{\zeta  t}-1}{\zeta} \! \sqrt{\sum_{k} \!
  \max_{\substack{x,y\in K, \\ Y \in K^{N_L}} }
  \varepsilon_k(x,y,Y)^2},
\]
 if $\zeta \!\!:=\!\!\nu[A]\!+\! \beta\! +\! \tau \!\!\neq\! 0$, where $\beta$ is defined in  \eqref{eq:BetaInter} and $\tau$ in \eqref{eq:tau}.
\end{cor}

The following remark gives a sufficient condition for when Assumption~\ref{ass:inK} holds for systems with dynamics given by \eqref{eq:dynamics_combined}.

\begin{remark}\label{rem:whenKbounded}
Assume that $f \in \mathbf C^1(\mR^d)$ with bounded Jacobian $[\partial f /\partial x ]$, and that $x = 0$ is a globally exponentially stable equilibrium point of the system $\dot{x} = f(x)$. If the terms $g(\cdot, \cdot)$ and $\eta(\cdot, \cdot)$ are globally bounded, then there is a compact region $K$ such that Assumption~\ref{ass:inK} holds for $T = \infty$.
\end{remark}

The remark can be shown by, e.g., using the input-output stability result \cite[Thm.~4.13]{khalil1992nonlinear}, where we identify the interaction and leader terms as the input signal, and the full state as the output signal.

% a sum involving estimate dynamics \eqref{eq:ApproSysBasic}, \eqref{eq:approx_dyn}, and \eqref{eq:ApproSysLeader2} respectively.

%\begin{remark}
%If the dynamics of a multi-agent system consists of multiple components including a spatial vector field \eqref{Eq:BasicModels}, interaction terms \eqref{eq:nonInteraction} and leaders influence \eqref{eq:ModelsWithLeaders}, then by the additivity of the proposed approach each component can be modeled separately and the resulting moments dynamics would be a sum involving estimate dynamics \eqref{eq:ApproSysBasic}, \eqref{eq:approx_dyn} and \eqref{eq:ApproSysLeader2} respectively.
%\end{remark}

\section{Convergence for monomial kernels with $d=1$}\label{sec:Convergence}
%\sz{[Remark~\ref{rmk:varNormlization} has been added.]}

In this section we will consider the
setup where the agents are governed by the basic nonlinear model (\ref{Eq:BasicModels})
 and give sufficient conditions for when the approximate moment dynamics converge to the dynamics of the true moments as the number of kernel functions goes to infinity, i.e., when $M \to \infty$.
In particular, we consider systems defined on the interval $[-1,1]$, and we select the kernel functions to be the monomials,%
\footnote{For convenience of notation, we will in this section index the kernels from zero, i.e., $k=0, 1, \ldots, M$.}
$\phi_k(x)=x^{k}$ where $k=0,1,\ldots, M$. For this case, we will give conditions on the system dynamics which guarantees the convergence of the moment dynamics.
Note that the number of functions
$\big\{f(x)\partial_x \phi_k(x)\big\}_{k=1}^M$ to be approximated increases with $M$ and hence the convergence of uniform approximation bounds do not necessarily follow even if the span of the family of kernel functions is dense in the space $\mathbf C(K)$.

First, let $f\in \mathbf C[-1,1]$ be a continuous function on the interval $ [-1,1]$, and let
$\mathbf{E}_n(f)$ be the error of the best $L_\infty$ approximation of $f$ by polynomials up to degree $n$, i.e.,
\begin{equation*}
  \mathbf{E}_n(f)= \min_{p \in \mathcal{P}_n}\|f(x)-p(x)\|_{\Linf{[-1,1]}},
\end{equation*}
where $\mathcal{P}_n$ is the set of all polynomials with degree at most $n$.
%Denote with $\mathcal{P}_n$ the set of all polynomials with degree at most $n$. For a function $f\in \mathbf C[a,b]$, define the error of the best $L_\infty$ approximation of function $f$ by polynomials in $\mathcal{P}_n$ as
%\begin{equation*}
%  \mathbf{E}_n(f)= \min_{p \in \mathcal{P}_n}\|f(x)-p(x)\|_{\infty}.
%\end{equation*}
When $f$ is a monomial, the approximation errors can be bounded in terms of the parameters
  \begin{equation}\label{eq:p_k,n}
  P_{k,n}:= \frac{1}{2^{k-1}} \sum_{j > \frac{n+k}{2}}^{k} \binom{k}{j}.
  \end{equation}
%as stated in the following lemma.
%the next lemma gives the error of $L_\infty$ approximation for power function $x^k$ on interval $[-1,1]$.
In particular, the following lemma states that for odd $M$, even $n$, and all $0 \leq k \leq M$, the error $\mathbf{E}_n(x^k)$  is bounded by $P_{M,n}$.

\begin{lemma}\label{thm:P_k,nMontonicity}
  Let $n$ and $M$ be positive integers. For any integer $k$ such that $0\leq k \leq 2M$,
  \[
    \mathbf{E}_{2n}(x^k)\leq P_{2M-1,2n}.
  \]
\end{lemma}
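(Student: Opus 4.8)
The plan is to prove the bound $\mathbf{E}_{2n}(x^k)\leq P_{2M-1,2n}$ for all $0\leq k\leq 2M$ by reducing every case to the benchmark quantity $P_{2M-1,2n}$ via a chain of monotonicity inequalities, using Lemma~\ref{thm:BoundE(x^k)} (which supplies $\mathbf{E}_n(x^k)\leq P_{k,n}$ when $n<k$), Lemma~\ref{thm:PropertiesEn_x^k} (the four structural properties of $\mathbf{E}_n(x^k)$), and Lemma~\ref{thm:montonicityP_k,n} (the monotonicity $P_{k,n}\leq P_{k+2,n}$ for $k\geq n$ with $k+n$ odd). The key observation is that the target index $2M-1$ is \emph{odd} while the smoothing index $2n$ is \emph{even}, so these auxiliary lemmas apply cleanly.

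First I would dispose of the easy range. If $k\leq 2n$, then $x^k$ either has degree at most $2n$ (so $\mathbf{E}_{2n}(x^k)=0$) or the bound is immediate, and in any case $\mathbf{E}_{2n}(x^k)=0\leq P_{2M-1,2n}$ since $P_{2M-1,2n}\geq 0$. So assume $k>2n$, putting us in the regime $n<k$ where Lemma~\ref{thm:BoundE(x^k)} gives $\mathbf{E}_{2n}(x^k)\leq P_{k,2n}$. It therefore suffices to show $P_{k,2n}\leq P_{2M-1,2n}$ for all $k$ with $2n<k\leq 2M$.

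Next I would handle parity. Lemma~\ref{thm:montonicityP_k,n} increases the first subscript in steps of $2$ (preserving parity of $k$) and requires $k+2n$ odd, i.e. $k$ odd. For odd $k$ with $2n<k\leq 2M-1$, iterating Lemma~\ref{thm:montonicityP_k,n} along $k,k+2,\ldots,2M-1$ yields $P_{k,2n}\leq P_{2M-1,2n}$ directly. For \emph{even} $k$ the plan is to first pass to an odd index using Lemma~\ref{thm:PropertiesEn_x^k}. When $k$ is even and $2n$ is even, property~(c) gives $\mathbf{E}_{2n}(x^k)=\mathbf{E}_{2n+1}(x^k)$, and then property~(a) applied in reverse together with (b) lets one compare against the neighbouring odd power $x^{k-1}$ or $x^{k+1}$; concretely, since $k-1$ is odd I would bound $\mathbf{E}_{2n}(x^k)$ using the odd case already established, noting $k-1\leq 2M-1$. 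The cleanest route is to show $\mathbf{E}_{2n}(x^k)\leq \mathbf{E}_{2n}(x^{k-1})$ is false in general, so instead I would use property~(b), $\mathbf{E}_n(x^k)\geq \mathbf{E}_{n+1}(x^{k+1})$, in the contrapositive direction, or combine (c) and (b) exactly as in the proof of property~(d) to move from the even power to an adjacent odd power without increasing the error, and then invoke the odd-$k$ result.

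The main obstacle I anticipate is the even-$k$ bookkeeping: marshalling properties~(a)--(d) of Lemma~\ref{thm:PropertiesEn_x^k} so that each even power $x^k$ is dominated by an odd power whose $P$-value is in turn dominated by $P_{2M-1,2n}$ via Lemma~\ref{thm:montonicityP_k,n}, while keeping all indices within the allowed ranges (in particular ensuring the relevant indices stay $\leq 2M-1$ and that the hypotheses $k\geq n$ and $k+n$ odd of Lemma~\ref{thm:montonicityP_k,n} hold at every step). Once the reduction to odd $k\leq 2M-1$ and the parity alignment are set up correctly, the remaining inequalities are a routine telescoping of the two monotonicity lemmas, and the conclusion $\mathbf{E}_{2n}(x^k)\leq P_{2M-1,2n}$ follows for the full range $0\leq k\leq 2M$.
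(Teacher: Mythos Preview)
Your overall strategy is the paper's strategy: handle odd $k$ by chaining $P_{k,2n}\le P_{k+2,2n}\le\cdots\le P_{2M-1,2n}$ via Lemma~\ref{thm:montonicityP_k,n}, and for even $k$ first pass to the neighbouring odd power using Lemma~\ref{thm:PropertiesEn_x^k}. The paper's proof is literally three one-line cases: for $k\in\{2M-1,2M\}$ use $P_{2M-1,2n}\ge\mathbf E_{2n}(x^{2M-1})\ge\mathbf E_{2n}(x^{2M})$; for odd $k<2M-1$ use $P_{2M-1,2n}\ge\cdots\ge P_{k,2n}\ge\mathbf E_{2n}(x^k)$; for even $k<2M-1$ use $P_{2M-1,2n}\ge\cdots\ge P_{k-1,2n}\ge\mathbf E_{2n}(x^{k-1})\ge\mathbf E_{2n}(x^k)$.

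Your only real stumble is in the even-$k$ reduction. You write that ``the cleanest route is to show $\mathbf E_{2n}(x^k)\le\mathbf E_{2n}(x^{k-1})$ is false in general'' and then cast about for alternatives through (b), (c), or a contrapositive. But this inequality is \emph{not} false here: it is exactly property~(d) of Lemma~\ref{thm:PropertiesEn_x^k}. That property says that if $n+k$ is odd then $\mathbf E_n(x^k)\ge\mathbf E_n(x^{k+1})$; applying it with $n\mapsto 2n$ and $k\mapsto k-1$ (so that $2n+(k-1)$ is odd precisely when $k$ is even) yields $\mathbf E_{2n}(x^{k-1})\ge\mathbf E_{2n}(x^k)$ directly. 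No detour is needed. Once you make this correction, your argument coincides with the paper's.
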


\begin{proof}
See Appendix~\ref{sec:Apx_Pf_montonicity}.
\end{proof}

By using this lemma
%as well as uniform bounds on $\mathbf{E}_n(x^k)$ in terms of $P_{k,n}$, given in Appendix~\ref{sec:Apx_Pf_montonicity},
%Using this lemma
one can obtain uniform bounds on the approximation errors of the moment dynamics. The following lemma shows that these errors
%the errors in the moment dynamics
uniformly converge to zero as the number of moments goes to infinity.
%ue to the uniform bound on $\mathbf{E}_{2n}(x^k)$ given in Lemma~\ref{thm:P_k,nMontonicity}, the following convergence result can be obtained.
\begin{lemma}\label{thm:Convergence_MomentSys_m}
  Let $f\in \mathbf C^3[-1,1]$ and let $\phi_k(x) = x^k$, then
  \begin{equation}\label{eq:thm:Convergence_MomentSys_m1}
   \max_{k=1, \dots, 4M} { M \mathbf{E}_{4M}\Big(f(x)\partial_x \phi_k(x)  \Big)} \to 0,
  \end{equation}
  as $M \to \infty$.
%  For any $\epsilon >0$, there is a $M>0$, such that for any $m \geq M$
%  \begin{equation}\label{eq:thm:Convergence_MomentSys_m1}
%    m \cdot \Big\| \frac{\partial \phi_k}{\partial x} f(x) - p_k^{*}(x) \Big\|_\infty \leq \epsilon,
%  \end{equation}
%  for each $k=1, 2, \dots, 4m$, where $p_k^*(x) \in \mathcal{P}_{4m}[-1,1]$ is the $L_\infty$ approximation of function $\frac{\partial \phi_k}{\partial x} f(x)$ by polynomials with degree at most $4m$.
\end{lemma}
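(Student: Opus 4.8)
The plan is to reduce the statement to a quantitative estimate on $\mathbf{E}_{4M}(f x^{k-1})$ and to analyze the product $f x^{k-1}$ according to how close $k$ is to $4M$. Since $\phi_k(x)=x^k$, we have $f\,\partial_x\phi_k = k\,f(x)x^{k-1}$, and by the absolute homogeneity of the best-approximation functional, $\mathbf{E}_{4M}(f\,\partial_x\phi_k)=k\,\mathbf{E}_{4M}(f x^{k-1})$. Thus it suffices to show $\max_{1\le k\le 4M} M k\,\mathbf{E}_{4M}(f x^{k-1})\to 0$. First I would fix the intermediate degree $n=M$ and let $p(x)=\sum_{i=0}^M b_i x^i$ be the best $L_\infty$ polynomial approximant of $f$ of degree $M$, with $r:=f-p$ and $\|r\|_\infty=\mathbf{E}_M(f)$. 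Writing $f x^{k-1}=p\, x^{k-1}+r\, x^{k-1}$ and using $\mathbf{E}_{4M}(g+h)\le \mathbf{E}_{4M}(g)+\|h\|_\infty$ together with $|x^{k-1}|\le 1$ on $[-1,1]$ gives
\[
\mathbf{E}_{4M}(f x^{k-1}) \le \mathbf{E}_M(f) + \sum_{i:\,k-1+i>4M} |b_i|\,\mathbf{E}_{4M}(x^{k-1+i}),
\]
since every monomial $x^{k-1+i}$ of degree at most $4M$ is reproduced exactly and contributes nothing.

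Next I would bound the two contributions separately. For the first, the classical Jackson theorem for $f\in\mathbf C^3[-1,1]$ yields $\mathbf{E}_M(f)\le C_f M^{-3}$, so $M k\,\mathbf{E}_M(f)\le 4M^2\cdot C_f M^{-3}=4C_f/M\to 0$ uniformly in $k$. For the tail sum I would invoke Lemma~\ref{thm:BoundE(x^k)}, namely $\mathbf{E}_{4M}(x^{k-1+i})\le P_{k-1+i,\,4M}$ (valid since $k-1+i>4M$), together with the uniform bounds on $P_{k,n}$ from Appendix~\ref{sec:Apx_Pf_montonicity} — equivalently the binomial-tail (Hoeffding) estimate $P_{m,n}\le 2e^{-n^2/(2m)}$. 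Because the tail indices satisfy $k-1+i\le 4M+M=5M$, this gives $P_{k-1+i,4M}\le 2e^{-(4M)^2/(2\cdot 5M)}=2e^{-1.6M}$. Finally, since $p$ has degree $M$ and $\|p\|_\infty\le 2\|f\|_\infty$, the standard Chebyshev-coefficient bound shows $|b_i|\le C_f(1+\sqrt 2)^M$. Collecting these estimates, the tail is at most $M\cdot C_f(1+\sqrt 2)^M\cdot 2e^{-1.6M}$, and hence
\[
M k\sum_{i}|b_i|\,\mathbf{E}_{4M}(x^{k-1+i}) \le 8C_f\,M^3\,(1+\sqrt 2)^M e^{-1.6M}\to 0,
\]
uniformly in $k$, because $1+\sqrt 2<e^{1.6}$.

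I expect the main obstacle to be precisely this large-$k$ regime, $k$ close to $4M$: there the Jackson estimate alone is useless, since taking the approximation degree large enough for $p\,x^{k-1}$ to stay within degree $4M$ forces it too small to control $\mathbf{E}_M(f)$, so one must instead approximate the overflowing high-degree monomials directly through Lemma~\ref{thm:BoundE(x^k)}. The delicate point is that the monomial coefficients $b_i$ of the best approximant grow exponentially, like $(1+\sqrt 2)^M$, and this growth must be beaten by the decay of $P_{k-1+i,4M}$. The choice $n=M$ is what makes the tail decay rate ($1.6M$) dominate the coefficient growth rate ($M\ln(1+\sqrt 2)\approx 0.88M$), leaving enough margin to absorb the polynomial prefactor $M^3$. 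Once both contributions are shown to vanish uniformly in $k$, the maximum over $k=1,\dots,4M$ tends to zero and the lemma follows.
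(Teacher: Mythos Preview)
Your argument is correct, but it takes a genuinely different route from the paper's proof. The paper does not touch the monomial coefficients of the approximant of $f$ at all: instead it approximates \emph{both} factors separately by polynomials of degree $2M$ --- $f$ by $q_1^*\in\mathcal P_{2M}$ via the Jackson estimate (Lemma~\ref{thm:BoundE(f)}), and $kx^{k-1}$ by $q_2^*\in\mathcal P_{2M}$ via Lemma~\ref{thm:BoundE(x^k)} and the monotonicity Lemma~\ref{thm:P_k,nMontonicity} --- and then uses the product $q_1^*q_2^*\in\mathcal P_{4M}$ as the candidate, splitting the error multiplicatively as $\|kx^{k-1}f - q_1^*q_2^*\|\le \|f\|\,\|kx^{k-1}-q_2^*\|+\|q_2^*\|\,\|f-q_1^*\|$. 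The uniform decay in $k$ then comes from Proposition~\ref{thm:limit_E2m,4m-1}, proved by a direct Stirling estimate on $P_{4M-1,2M}$.

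Your approach instead fixes $p\in\mathcal P_M$ approximating $f$, keeps $x^{k-1}$ exact, and treats the resulting polynomial $px^{k-1}$ of degree up to $5M-1$ monomial by monomial, approximating only the overflow terms. This forces you to control the individual coefficients $b_i$ of the best approximant --- which grow like $(1+\sqrt 2)^M$ --- and then to beat that growth with the Hoeffding tail $e^{-1.6M}$ for $P_{m,4M}$. Neither of those two ingredients is in the paper (Appendix~\ref{sec:Apx_Pf_montonicity} contains monotonicity results, not Hoeffding, and the coefficient bound is external), so your proof imports more machinery but in exchange gives explicit exponential rates. The paper's multiplicative splitting is cleaner precisely because it sidesteps the coefficient-growth issue entirely: by bounding $\|q_2^*\|_\infty$ rather than its monomial coefficients, it never has to balance two competing exponentials.
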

\begin{proof}
  See Appendix~\ref{sec:Apx_Pf_Convergence_dynamics}.
\end{proof}

The main result of this section can now be stated as follows.
%Now we are standing the point to give the main result of this section with the following theorem.
\begin{thm}\label{thm:Convergence_MomentSys_2Norm}
%Let
Given $d\mu_t \in \mathcal{M}_+([-1,1])$ %$d\mu \in \mathcal{M}_+(K)$
%be given
and $f\in \mathbf C^3[-1,1]$,
let $m^M\!\!\in\!\! \mathbb{R}^{4M\!+\!1}$ be the moments corresponding to the monomial kernels, i.e., $\phi_k(x)=x^{k}$ where $k=0,1,\ldots, 4M$, and let $\dot m^M$ be the dynamics of the true moment system given by \eqref{eq:MomentSysBasic}. Furthermore, let $\dot{\overline{m}}^M \!\in\! \mathbb{R}^{4M\!+\!1}$ be the estimated dynamics in \eqref{eq:ApproSysBasic} under the $L_\infty$ approximation. Then
  \begin{equation}\label{eq:thm_Convergence_MomentSys_2Norm}
    \lim_{M\to \infty} \|\dot{\overline{m}}^M -\dot m^M \| =0,
   \end{equation}
i.e., the norm of the the instantaneous error dynamics goes to zero as the number of moments goes to infinity.
\end{thm}

%\begin{thm}\label{thm:Convergence_MomentSys_2Norm}
%For $M \in \mathbb Z^+$, let the kernel functions for the moment system be the monomials, i.e., $\phi_k(x)=x^{k}$ where $k=0,1,\ldots, 4M$. Let $\dot m^M(t)$ be the dynamics of the true moment system, and let $\dot{\overline{m}}^M(t) \in \mathbb{R}^{4M+1}$ be the estimate dynamics in \eqref{eq:ApproSysBasic} under the $L_\infty$ approximation. If the dynamics $f\in \mathbf C^3[-1,1]$, then %for all $\overline{m}(t) = m(t) \in \Ccal$
%  \begin{equation}\label{eq:thm_Convergence_MomentSys_2Norm}
%    \lim_{M\to \infty}  \; \sup_{\overline{m}(t) = m(t) \in \Ccal} \|\dot{\overline{m}}^M(t) -\dot m^M(t) \|_2 =0,
%  \end{equation}
%where $\|\cdot\|_2$ is the $2$-norm.
%\end{thm}

%\begin{thm}\label{thm:Convergence_MomentSys_2Norm}
%For $M \in \mathbb Z^+$, let kernel functions for the moment system be monomials, i.e., $\phi_k(x)=x^{k}$ where $k=0,1,\ldots, 4M$. If the dynamics $f\in \mathbf C^3[-1,1]$, then under the $L_\infty$ approximation the estimate dynamics in (\ref{eq:ApproSysBasic}), denoted by $\dot{\overline{m}}^M(t) \in \mathbb{R}^{4M+1}$, satisfies
%  \begin{equation}\label{eq:thm_Convergence_MomentSys_2Norm}
%    \lim_{M\to \infty} \|\dot{\overline{m}}^M(t) -\dot m^M(t) \|_2 =0,\quad \forall t\geq 0,
%  \end{equation}
%  where $\dot m^M(t)$ is the dynamics of the true moment system with the same kernels, and $\|\cdot\|_2$ is the $2$-norm.
%\end{thm}
\begin{proof}
 Note that
 \begingroup
 \allowdisplaybreaks
 \begin{align*}
 \|\dot{\overline{m}}^M -\dot m^M \|& = \left(\sum_{k=1}^{4M} \left( \int_K \varepsilon_k (x) d\mu_t(x) \right)^2\right)^{1/2} \\
 &\leq \left(\sum_{k=1}^{4M} \Big(  \mathbf{E}_{4M}(kx^{k-1} f(x))  \Big)^2\right)^{1/2} \\
 &\leq  2 M^{1/2} \max_{k=1, \dots, 4M} {\mathbf{E}_{4M}(kx^{k-1} f(x) )},
 \end{align*}
 \endgroup
 which by Lemma~\ref{thm:Convergence_MomentSys_m} goes to $0$ as $M \to \infty$.
\end{proof}

When the smoothness of $f$ increases, the
 convergence rate of the errors %in %Theorem~\ref{thm:Convergence_MomentSys_2Norm}
 is improved %can reach a speed of any polynomial order
  as stated in the next corollary. Its proof follows from Corollary~\ref{thm:Convergence_MomentSys_m^l} in Appendix~\ref{sec:Apx_Pf_Convergence_dynamics}.

\begin{cor}\label{thm:Convergence_MomentSys_2Norm_m^l}
  If the function $f$ in Theorem~\ref{thm:Convergence_MomentSys_2Norm} is $(\ell+3)$-continuously differentiable, i.e., $f\in \mathbf C^{\ell+3}[-1,1]$, then Theorem~\ref{thm:Convergence_MomentSys_2Norm} holds with the equation (\ref{eq:thm_Convergence_MomentSys_2Norm}) replaced by
    \begin{equation*}
    \lim_{M\to \infty} M^{\ell} \, \|\dot{\overline{m}}^M -\dot m^M \| =0.
  \end{equation*}
\end{cor}

%\begin{remark}
The convergence result in Theorem~\ref{thm:Convergence_MomentSys_2Norm} and Corollary~\ref{thm:Convergence_MomentSys_2Norm_m^l} can be applied for general intervals $[a,b]$,
%Moreover when the interval is other than $[-1,1]$,
provided that the kernel functions are selected accordingly, i.e.,  %That is
%\begin{remark}\label{rmk:varNormlization}
%If the interval in which the system evolves is $[a,b]$ for $a<b \in \mathbb R$,
%by selecting the kernel functions
 as the normalized and translated monomials
\[
\bar \phi_k(x) = \left( \frac{2x - (b + a)}{b - a} \right)^k,\quad  \mbox{ for }k=0,\ldots, M.
\]
%\end{remark}
This corresponds to the normalization $\|\bar \phi_k(x)\|_{\Linf{[a,b]}}=1$. It should be noted that the corresponding convergence result in Theorem~\ref{thm:Convergence_MomentSys_2Norm} does not hold for arbitrary intervals without this normalization.

For example, even for the $\mathbf C^\infty$ function $f(x)=x^2$ the corresponding moment dynamics can not be approximated
if the length of the interval is larger than $4$. In fact, noting that $f \partial_x \phi_k=k x^{k+1}$,
the following lemma shows that the bound of the dynamics error in  (\ref{eq:thm_Convergence_MomentSys_2Norm}) diverges as $M\to \infty$.
% the error of the dynamics (\ref{eq:thm_Convergence_MomentSys_2Norm}) diverges as $M\to \infty$, which is implied by the following lemma.
%Even though in Theorem~\ref{thm:Convergence_MomentSys_2Norm} we restrict the interval to $[-1,1]$, indeed as implied by the next lemma we have not lost much when impose this restriction.
\begin{lemma}\label{lem:limit_E2m+1,2m}
  %Suppose $p(x) \in \mathcal{P}_n$ is a polynomial with degree at most $n$. If $p(x)$ is the $L_\infty$ approximation of $x^{n+1}$ on the interval $[a,b]$,
   For an interval $[a,b]$, the approximation error
   $\mathbf{E}^{[a,b]}_n(x^{n+1}):= \min_{p \in \mathcal{P}_n}\|x^{n+1}-p(x)\|_{\Linf{[a,b]}}$ satisfies
  %$\mathbf{E}_n^{[a,b]}(x^{n+1})$, then
  \[\lim_{n \to \infty} \mathbf{E}_n^{[a,b]}(x^{n+1}) \to \infty,
   \]
   if the length of the interval $(b-a)>4$.
\end{lemma}
\begin{proof}
See Appendix~\ref{sec:Apx_Pf_limit_E2m+1,2m}.
\end{proof}
%According to Lemma~\ref{lem:limit_E2m+1,2m},
%if the length of the interval is larger than $4$, then even for a $\mathbf C^\infty$ function $f(x)=x^2$, the error of dynamics (\ref{eq:thm_Convergence_MomentSys_2Norm}) would diverge as $M\to \infty$ (since (\ref{eq:thm:Convergence_MomentSys_m}) will diverge in Appendix~\ref{sec:Apx_Pf_Convergence_dynamics}).
Loosely speaking, this restriction on the interval length can be viewed as an improper normalization of the kernel functions and gives rise to limitations of the approximation capacity of the monomials.
 This also provides an impelling reason for carefully considering the choice of kernel functions. For example considering Gaussian kernel functions or Chebyshev kernels, especially when a large region is considered.
It also highlights the need to characterize
what information that the moments carry about the measure, and thus how errors in the moments propagate to errors in the measure.
%what the moments and corresponding errors in them result in \rev{when it comes to uncertainties or errors in the measure.} %Convergence of the moment dynamics is actually only needed if this is strictly needed in order to be able to say something about the corresponding measure.

Finally, note that good approximation of the moment dynamics does not necessarily give good approximation of the moment trajectories. The accuracy of the moment trajectories also depends on the error propagation, as seen in Theorems \ref{thm:ErrorBoundBasic} and \ref{thm:ErrorBoundInter}. The topic on convergence of the moment trajectories will be left for a future study.
%further studied.

%%%%%%%%%%%%%%%%%%%%%%%%%%%%%%%%%%%%%%%%%%%%%%%%%%%%%%%%%%%%%%%%%%%%%%%%%%%%%%

\section{Algorithms for moment-based modeling}\label{sec:alg}
In this section we discuss how the approximation of the dynamics in \eqref{eq:errorBasic}, \eqref{eq:errorInter} and
\eqref{eq:ApproSysLeader1} can be implemented. We will also consider how to reconstruct a nominal distribution from the estimate moments.
A high-level algorithm is given in Algorithm~\ref{alg:moment_method}. In particular, step 2 and 4 in this algorithm are outlined in more detail in the coming two subsections.

 %This is discussed in the following section.

\renewcommand{\algorithmicrequire}{\textbf{Input:}}
\renewcommand{\algorithmicensure}{\textbf{Output:}}
\algsetup{indent=12pt}

\begin{algorithm}[b]
\caption{High-level algorithm for moment-based modeling of collective behavior.}
\label{alg:moment_method}
\begin{algorithmic}[1]
\REQUIRE Agent dynamics \eqref{eq:dynamics_combined}, i.e., $N$, $N_L$, $f$, $g$, and $\eta$.
\STATE Choose kernel functions $\{ \phi_k(x) \}_{k=1}^M$ and $\{ \psi_k(y) \}_{k=1}^{M_L}$.
\STATE \emph{Moment dynamics approximation}: compute the approximate moment dynamics \eqref{eq:approx_dyn_all}, by an appropriate minimization of the approximation errors $\varepsilon_k$ in \eqref{eq:errorBasic}, \eqref{eq:errorInter} and \eqref{eq:ApproSysLeader1}.%See Section~\ref{subsec:moment_dyn_approx}.
\STATE Solve the obtained ODE system \eqref{eq:approx_dyn_all} starting form the initial condition $m(0)$, or from an approximation thereof.
\STATE \emph{Reconstruction of the distribution}: reconstruct macroscopic properties of interest by solving a convex optimization problem \eqref{eq:reco_opt_prob_general}.
\end{algorithmic}
\end{algorithm}

\subsection{Moment dynamics approximation}\label{subsec:moment_dyn_approx}
First consider the problem of approximating the dynamics for basic systems of agents \eqref{Eq:BasicModels}, that is, systems without interactions. Note that the corresponding moment dynamics $[\dot m_k(t)]_{k=1}^M$ from \eqref{eq:MomentSysBasic}
is close to the dynamics of the linear system $\dot{ \overline m}(t)$ in  \eqref{eq:ApproSysBasic} if the error terms $\varepsilon_k (x)$ in \eqref{eq:errorBasic} are small on the support of the agents. An explicit bound on the difference $\Delta m(t)=m(t)-\overline m(t)$ is given in terms of the $L_\infty(K)$-norm of the approximation error \eqref{eq:errorBasic}, according to Theorem~\ref{thm:ErrorBoundBasic}.
%There are several ways in which the dynamics of the moments \eqref{eq:MomentSysBasic} can be estimated. One natural approximation would be to use $L_\infty$ approximation, since according to Theorem~\ref{thm:ErrorBoundBasic}, the $L_\infty$ norm of the approximation errors \eqref{eq:errorBasic} is directly reflected in the moment error $\Delta m$.
Also note that the moment error $\Delta m$ not only depends on the instantaneous error in the dynamics, but also on the propagation of the error in time, which is governed by $\nu[ A ]$. Thus in the approximation procedure one needs to balance the trade-off between both factors. To this end, we introduce constraints on the logarithmic norm $\nu[ A ]$, resulting in the optimization problem
\begin{align}
% \nonumber to remove numbering (before each equation)
   \min_{a^k_{\ell}} \quad  \quad& \sum_{k=1}^{M} \left\| \frac{\partial \phi_k(x)}{\partial x} f(x) - \sum_\ell a^k_{\ell}\phi_\ell(x) \right\|
   _{L_\infty(K)} \nonumber\\
  \text{subject to}  \quad &  \nu[A] \le \kappa, \label{eq:optimizationBasic}
\end{align}
where $\kappa\in \mathbb{R}$ is a tunable constant. This is a convex problem and there is a large number of numerical algorithms that can be applied. One way to solve it is by first discretizing the domain $K$ into a finite number of grid points, and then solve the resulting approximate problem with, e.g., CVX \cite{grant2008graph, cvx}. This is the method used to solve these problems in Section~\ref{sec:example}.

In the corresponding problems with interactions
\eqref{eq:nonInteraction} and leaders \eqref{eq:ModelsWithLeaders} we can also
approximate the moment dynamics by ODEs, and the errors in the approximate moments
%$\Delta m(t)$
can be bounded
according to  Theorem~\ref{thm:ErrorBoundInter} and  \eqref{eq:leader_bounds_simplified}, respectively.
%XX[the error bound below \eqref{eq:ApproSysLeader2}].
Thus we can formulate corresponding optimization problems for the approximate moment dynamics \eqref{eq:approx_dyn} with interactions as
\begin{align}
% \nonumber to remove numbering (before each equation)
   \min_{\Belement^k_{\ell, j}} \quad  & \sum_{k=1}^{M} \left\| \frac{\partial \phi_k(x)}{\partial x}g(x,y) - \! \sum_{\ell, j=1}^M \Belement^k_{\ell, j} \phi_\ell(x)\phi_j(y) \right\|
   _{L_\infty(K^2)} \nonumber \\
  \text{s.t.}  \quad&
  \max\big\{\nu[\widetilde{B}_{\ell}], \nu[-\widetilde{B}_{\ell}]\big\} \le \kappa_\ell,  \quad \ell=1, \dots, M, \label{eq:optimizationInter}
\end{align}
and for the approximate moment dynamics \eqref{eq:ApproSysLeader2} with leaders as
\begin{align}
% \nonumber to remove numbering (before each equation)
   \min_{\gamma^k_{\ell, r}}  \;\; & \sum_{k=1}^{M} \left\| \frac{\partial \phi_k(x)}{\partial x} \eta(x, y) - \sum_{\ell=1}^M \sum_{r=1}^{M_L} \gamma^k_{\ell,r}\psi_r(y) \phi_\ell(x) \right\|
   _{L_\infty(K^2)} \nonumber \\
\text{s.t.}  \;\; &
\max\big\{\nu[\Gamma_\ell], \nu[-\Gamma_\ell]\big\}
\!\le\! \kappa_{\ell}, \, \ell\!=\!1, \dots, M_L. \label{eq:optimizationLeader}
\end{align}
%\ar{Question: why do we have bounds on $\nu[- B/\Gamma]$? Rereading the paper now, it is not obvious to me...\reva{In thm. 5, $\beta \leq M \kappa_\ell \max_{m\in D} \|m\|_\infty$, thus $\kappa_\ell$ is somehow a bound for $\beta$.}}
Here the constants $\{\kappa_{\ell}\}_\ell$ give the bounds on the logarithmic norms
$\nu[ \widetilde{B}_{\ell} ]$ and $\nu[\Gamma_\ell]$, and can be selected
to bound the propagation of the error in time but at the expense of a possibly worse instantaneous error.

Another approximation method that could be used is to use the $L_2(K)$ (or $L_2(K^2)$) in the objective function.
This objective function cannot be directly interpreted and justified in terms of Theorem~\ref{thm:ErrorBoundBasic}, however, minimization of the $L_2$ norm also tends to make the largest deviations small. Further the objective functions in \eqref{eq:optimizationBasic}, \eqref{eq:optimizationInter}, \eqref{eq:optimizationLeader} (with $L_\infty$ norm replaced by squared $L_2$ norm) are quadratic forms and the optimization problems thus become relatively small semidefinite optimization problems.
In particular, if we do not consider the constraints (i.e., take $\kappa_\ell$ large enough), the $L_2$ approximation can be found numerically by the least squares approximation \cite[Sec. 11]{powell1981approximation}.

%\rev{
%In the calculation of \eqref{eq:optimizationBasic}--\eqref{eq:optimizationLeader}, if we do not consider the constraints (i.e., take $k_\ell$ large enough), the $L_2$ and $L_\infty$ approximations can be found numerically by the least squares approximation \cite[Sec. 11]{powell1981approximation}, and the exchange algorithm \cite[Sec. 8]{powell1981approximation} respectively. If we consider the constraints, one of the methods to solve these optimization problems is to evaluate the objective functions at the equally spaced grids of function space $K$.}

\begin{remark}
Other quantifications of the mismatch can of course also be used in the approximations \eqref{eq:optimizationBasic}--\eqref{eq:optimizationLeader}.
%There are several other concerns that are also of interest to include in the approximations \eqref{eq:optimizationBasic} and \eqref{eq:optimizationInter}.
Moreover, there are several other properties that could also be of interest.
One of these is stability of the resulting moment system. Another is its invariance with respect to the set $\Ccal$. %cone ${\mathfrak C}_+$.
For the basic systems discussed in Sec.~\ref{sec:basicSystems}, this is the invariance of a linear system, which is related to positive and monotone systems \cite{angeli2003monotone, rantzer2015scalable}. A weaker condition related to positive systems are so-called eventually positive systems \cite{altafini2016minimal, altafini2015predictable}, which could also be of interest. For agents with interactions the approximate system is a quadratic system, %meaning that
whose invariance
%with respect to a closed interval
is related to copositive matrices \cite{hall1963copositive}, \cite{dur2010copositive}.% \cite[Sec. 1]{dur2010copositive}.
% \jk{remove XX However, the cone formed by all copositive matrices is hard to characterize \cite[Sec. 1]{dur2010copositive}, indicating that invariance with respect to a more general cone might also be difficult.}
\end{remark}

\subsection{Reconstruction of the distribution}\label{subsec:reco}
As mentioned in section \ref{subsec:inverse}, from a finite set of moments we can compute bounds on the distribution \cite{marzetta1984power} %lang1984confidence
or
%, e.g., by solving convex optimization problems \cite{lang1985bounds}
%\cite{lang1985confidence}. To
obtain a nominal estimate of the distribution \eqref{eq:moment_rep}.
This kind of reconstruction can in many cases be done by solving a convex optimization problem,
e.g., a problem on the form
\begin{subequations}\label{eq:reco_opt_prob_general}
\begin{align}
\min_{d\mu \in \mathcal{M}_+(K)} & \quad \int_K F(d\mu) \\
\text{subject to}  & \quad m \approx %\raisebox{-6pt}{\, $\overset{\text{\large $=$}}{\overset{\text{\small or}}{\approx}}$ \,}
%\,=
\int_K \phi(x)d\mu(x).\label{eq:reco_opt_prob_generalb}
\end{align}
\end{subequations}
Here, $F$ is a convex functional, and the constraints \eqref{eq:reco_opt_prob_generalb} are either enforcing exact matching of the moments or representing a suitable approximate matching
(see, e.g., \cite{borwein1991duality,
borwein1993partially, karlsson2016multidimensional, ringh2017multidimensional}). An example of such a problem for reconstructing a nominal estimate of the distribution  is the following total variation minimization problem with approximate moment matching
\begin{align}\label{eq:reco_opt_prob}
\min_{\Phi \geq 0, \varepsilon \geq 0} & \quad \int_K |\nabla \Phi(x)|d\sigma(x) + \lambda \, \varepsilon  \\
\text{subject to}  & \quad  | m_k \!-\! \int_K \phi_k(x)\Phi(x)d\sigma(x) | \leq \varepsilon, \, k = 1, \ldots, M, \nonumber
\end{align}
%\begin{align}\label{eq:reco_opt_prob}
%\min_{d\mu \geq 0, \varepsilon \geq 0} & \quad \int_K |\nabla d\mu| + \lambda \, \varepsilon  \\
%\text{subject to}  & \quad  | m_k - \int_K \phi_k(x)d\mu(x) | \leq \varepsilon, \quad k = 1, \ldots, M, \nonumber
%\end{align}
where $\lambda$ is a regularization parameter, and where we optimize over the set of absolutely continuous measures $d\mu(x)=\Phi(x)d\sigma(x)$ with respect to the Lebesgue measure $\sigma$.
% and $\int_K |\nabla d\mu|$ is the totally variation of $\mu$.

In some cases a reconstruction of the full distribution may not be needed, e.g., when one needs to bound the number of agents in a hazardous region or ensure that all agents have reached a safe zone. A possible  formulation of \eqref{eq:reco_opt_prob_general} for such problem is to determine the maximal or minimal mass in a given subregion $\Omega\subset K$:
\begin{align}
\maxmin_{d\mu \in \mathcal{M}_+(K)} & \quad \int_\Omega d\mu  \label{eq:minmaxmass}\\
\text{subject to}  & \quad  m = \int_K \phi(x)d\mu(x).\nonumber
\end{align}
These are convex problems, and the resulting bounds are sometimes referred to as Cybenko bounds (cf. \cite{marzetta1984power, karlsson2013uncertainty}).

%\sz{Add more for reconstruction methods to enhance this section.}

\begin{figure}[t]
  \centering
  \includegraphics[width=0.45\textwidth]{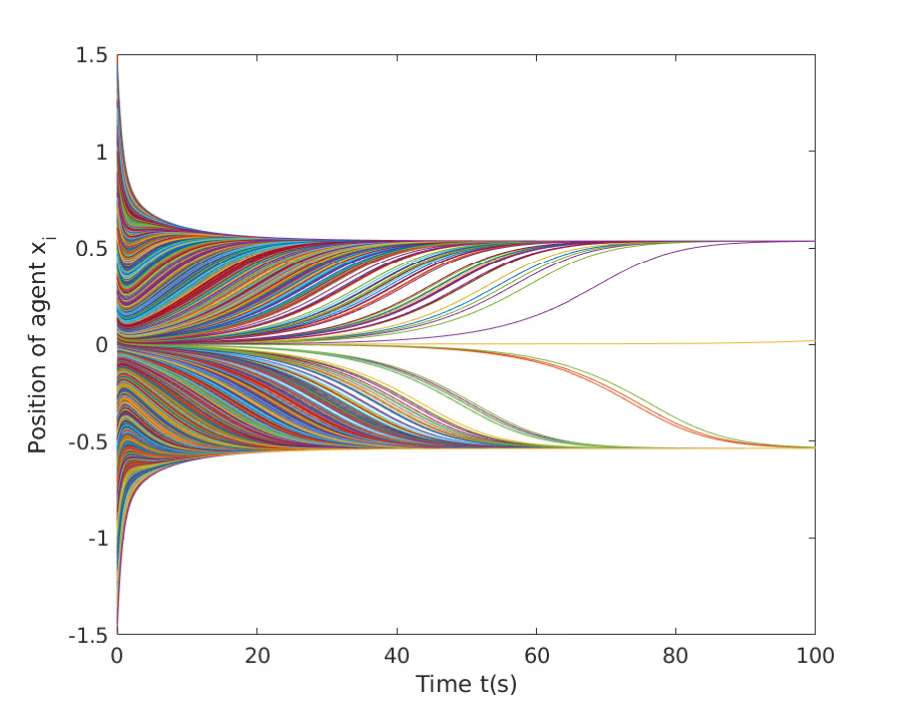}
  \vspace*{-10pt}
  \caption{The behavior of system \eqref{eq:Dynamics_Toy} with $10^4$  particles. The agents converge to a formation consisting of three clusters.}\label{Fig:Toy_behavior}
  \vspace{-15pt}
\end{figure}

\section{Numerical example}\label{sec:example}

In this section the collective behaviors of two particular multi-agent systems are investigated via the proposed approach.
In the first example, we consider a system with interacting agents.
For several different kernels the overall particle distribution can be captured using only a few moments to represent the system.
In the second example a pedestrian crowd moving in two-dimensional (2-D) space is modeled using the moment-based approach. The result shows that the method is applicable also for leader-follower scenarios.

%\sz{[Give all positive aspects obtained from the simulations, in case the reviewer only skim this section.]}
%\sz{
%In the first example, we consider a system with intricate collective behavior presented by $10^4$ interactive particles. For different kernels, the reconstructions show that the resulting moments systems which are of only several orders manage to carry sufficient information on the evolution of the overall particle distributions. Moreover, a longer valid period of the moments dynamics is achieved by introducing a trade off between accuracy and stability as indicated by Remark~\ref{rmk:tradeOffAccSta}.
%}

%\sz{
%Then a pedestrian crowd in 2-D is modeled by the moment-based approach, which shows that when there are controlled agents involved, the method performs rather accurately as well. This unfolds the prospect of using such an approach for control problems of collective behavior. }

%In both examples, the dynamics of each individual involves essentially nonlinear components which can lead to rather intricate collective behaviors on  a macroscopic level.

\subsection{1-D example: A system governed by a spatial field and interactions}\label{subsec:1D_example}
%\jk{remove XX \ar{Change this for the Heglemann-Krause model? \cite{hegselmann2002opinion}.}}

We begin with a 1-dimensional scenario where the particles are driven by a time-invariant spatial field plus a repulsive influence between each pair of individuals. The dynamics of particle $i$ is
\begin{equation}\label{eq:Dynamics_Toy}
  \dot{x}_i = f(x_i) + \frac{1}{N} \sum_{j=1}^{N} g(x_i, x_j) \quad \mbox{ for } i=1, 2, \dots, N,
\end{equation}
where $x_i \in \mathbb{R}$ is the state of particle $i$, $f(x)=-x$ is a stabilizing vector field, and $g(x,y)=2e^{-0.6(x-y)^2}(x-y)$ is the repulsive interaction with exponentially decaying influence. Note that in this case, $f$ and $g$ satisfy the conditions in Remark~\ref{rem:whenKbounded} and therefore Assumption~\ref{ass:inK} holds for $T = \infty$. In fact, one can verify that for $\xi:=2/\sqrt{1.2}e^{-1/2}\approx 1.1$ the interval $[-\xi, \xi]$ is an invariant set for each system in \eqref{eq:Dynamics_Toy}.

The behavior of a system consisting of $10^4$  homogeneous particles\footnote{The initial position of each agent was drawn from a uniform distribution on the interval $[-1.5, 1.5]$} governed by dynamics \eqref{eq:Dynamics_Toy} is simulated, and the trajectories of all particles are shown in Figure~\ref{Fig:Toy_behavior}.
%As the result of the involved nonlinear interactions,
As can be seen, the collective behavior of the system gives rise to a formation consisting of three clusters. %, and which of the three clusters a particular agent converges to depends on its initial position.

\begin{figure}[b]
  \centering
  \includegraphics[width=0.42\textwidth]{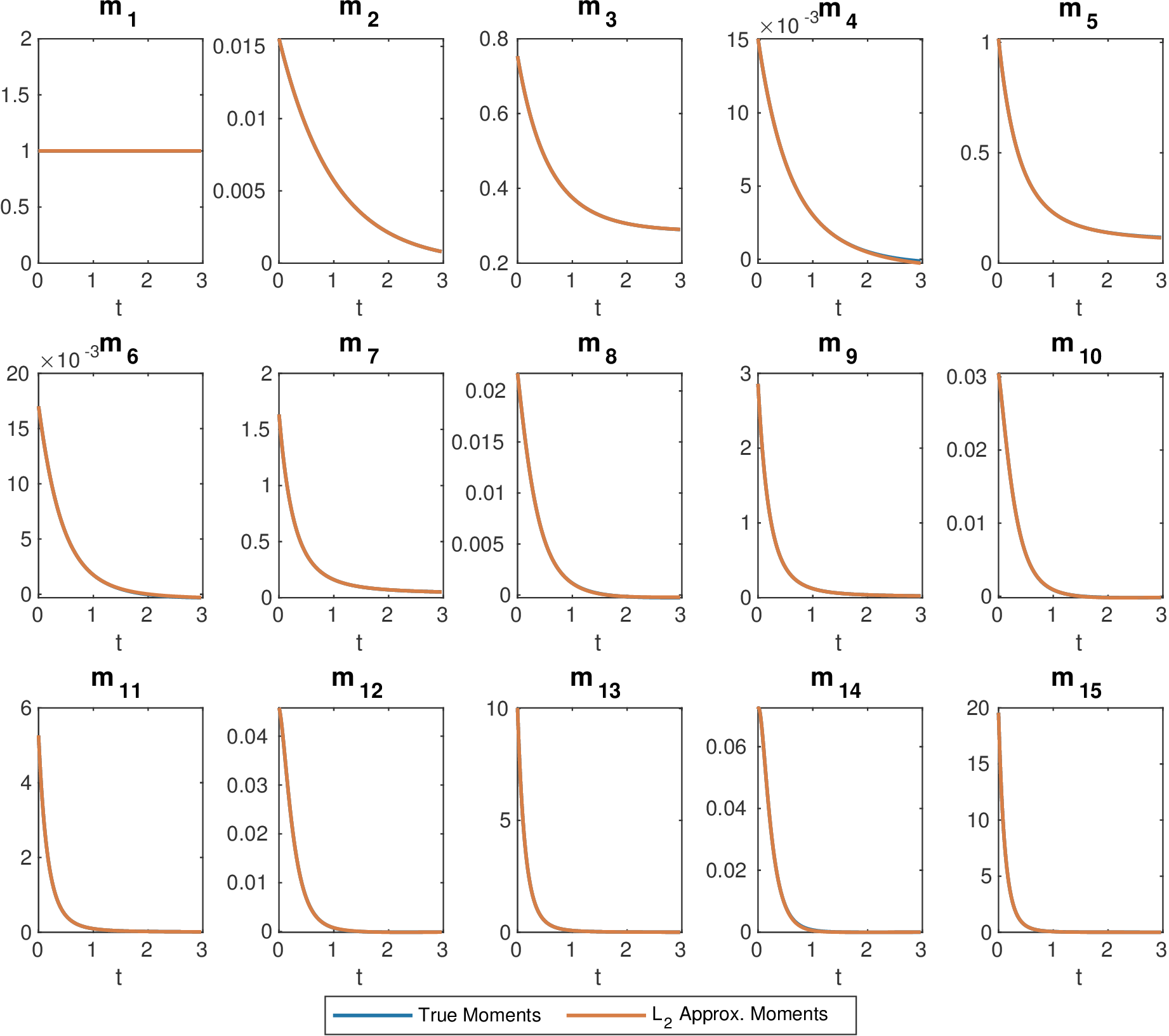}
  \caption{Moment trajectories for time  $t \in [0,3]$, using $15$ monomial kernel functions, i.e., $\phi_k(x)=x^{k-1}$, where $k=1, \dots, 15$. The trajectories are for both the true system and the $L_2$ approximated system. (As can be seen, the two trajectories overlap almost perfectly.)}\label{Fig:moment_traj_0to3s}
\end{figure}

\subsubsection{Modeling the collective behavior using moments}
Next we model the system using moments.
We consider the following three sets of kernel functions:
\begin{itemize}
\item Monomials, i.e., $\phi_k(x)=x^{k-1}$, where $k=1, \dots, 15$.
\item Chebyshev polynomials of the first kind, orthogonal on the interval $K=[-2,2]$, i.e.,
\begingroup
   \allowdisplaybreaks
\begin{align}
\phi_{1}(x)\!&=\!1, \label{eq:chebyshev} \\
\phi_{k}(x)\!&=\!(k\!\!-\!\!1) \!\!\sum_{i=0}^{k\!-\!1} \!\!\frac{(-\!2)^{i} \!(k\!\!+\!\!i\!\!-\!\!2)!}{(k\!\!-\!\!i\!\!-\!\!1)!(2i)!} \! \left(\!1\!-\!\frac{x}{2} \right)^i \mbox{for } k \!=\! 2, \!\ldots,\! 15. \nonumber
\end{align}
\item Monomials multiplied with Gaussian functions\footnote{This gives more spatial localization of the information carried by each moment, but in order to still convey some global information of the distribution we take the last kernel function to be the constant function, i.e., $\phi_{15}(x) \equiv 1$. }
\begin{align}
\phi_{7i+j}(x)&=\tfrac{x^i\exp\left(\!-\frac{(x - \rho_j)^2}{\sigma^2}\right)}{\sqrt{2\pi \sigma^2}}  \mbox{ for } i = 0,1;\, j = 1, \ldots, 7,\nonumber\\
\phi_{15}(x)&= 1,\label{eq:gaussian}
\end{align}
where the centers $\rho_j\in \mR$ are taken as equidistant points within interval $[-1.5, 1.5]$ while $\sigma$ is set to $2/3$.
\endgroup
\end{itemize}
%monomials, Chebyshev polynomials, and monomials multiplied with Gaussian functions.
In each set a total of 15 kernel functions are used, and for each set the region on which we approximate the dynamics is taken to be the closed interval $K=[-2,2]$.
The approximations are carried out for minimizing the errors in \eqref{eq:errorBasic} and %\eqref{eq:errorInter0}
\eqref{eq:errorInter} using the formulations \eqref{eq:optimizationBasic} and \eqref{eq:optimizationInter}
with $f(x)=-x$ and $g(x,y)=2e^{-0.6(x-y)^2}(x-y)$, respectively.
For all three choices of kernel functions we attempt to compute the approximations using both the $L_2$ norm and the $L_\infty$ norm. These problems are solved without bounds on the logarithmic norms (i.e., $\kappa_\ell=\infty$ for all $\ell$). In addition,  we also consider the model reduction with logarithmic norm constraints for the case with Gaussian kernel functions \eqref{eq:gaussian}.
Moreover, in order to see how well the distribution of agents can be recovered we  solve the reconstruction problem \eqref{eq:reco_opt_prob} for $t= 3$ and $t = 100$.
The optimization problems are solved in Matlab using CVX \cite{grant2008graph, cvx} (except for the $L_2$ approximations without bounds, which can be computed by solving linear systems).

\begin{figure}[tb]
 \centering
  \includegraphics[width=0.42\textwidth]{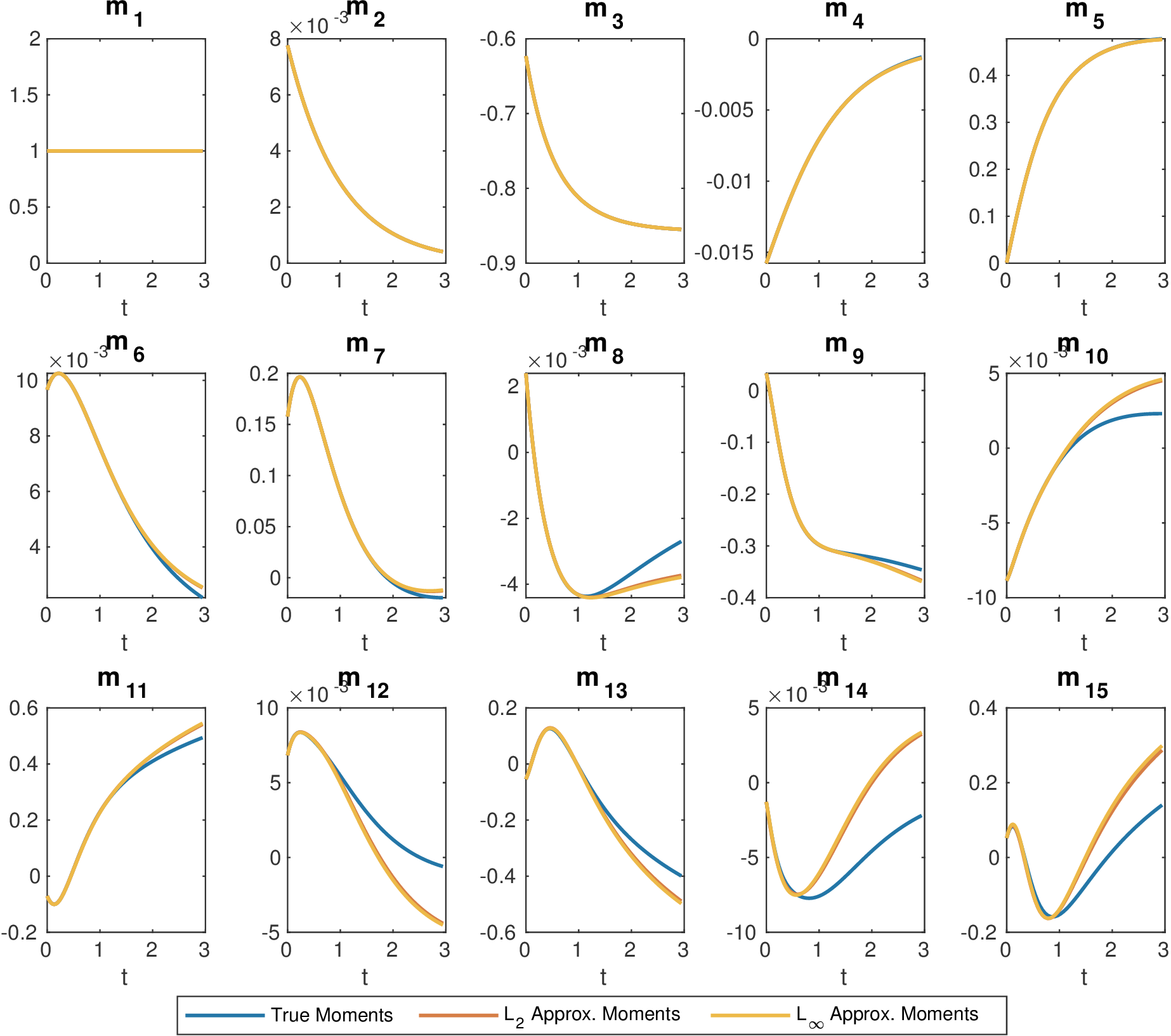}
  \caption{Moment trajectories for time  $t \in [0,3]$, using $15$ Chebyshev polynomials \eqref{eq:chebyshev} as kernel functions. The trajectories are for the true system, the $L_2$ approximated system, and the $L_\infty$ approximated system.}\label{Fig:moment_traj_cheb_t0to3}
\end{figure}

\begin{figure}[tb]
 \centering
  \includegraphics[width=0.42\textwidth]{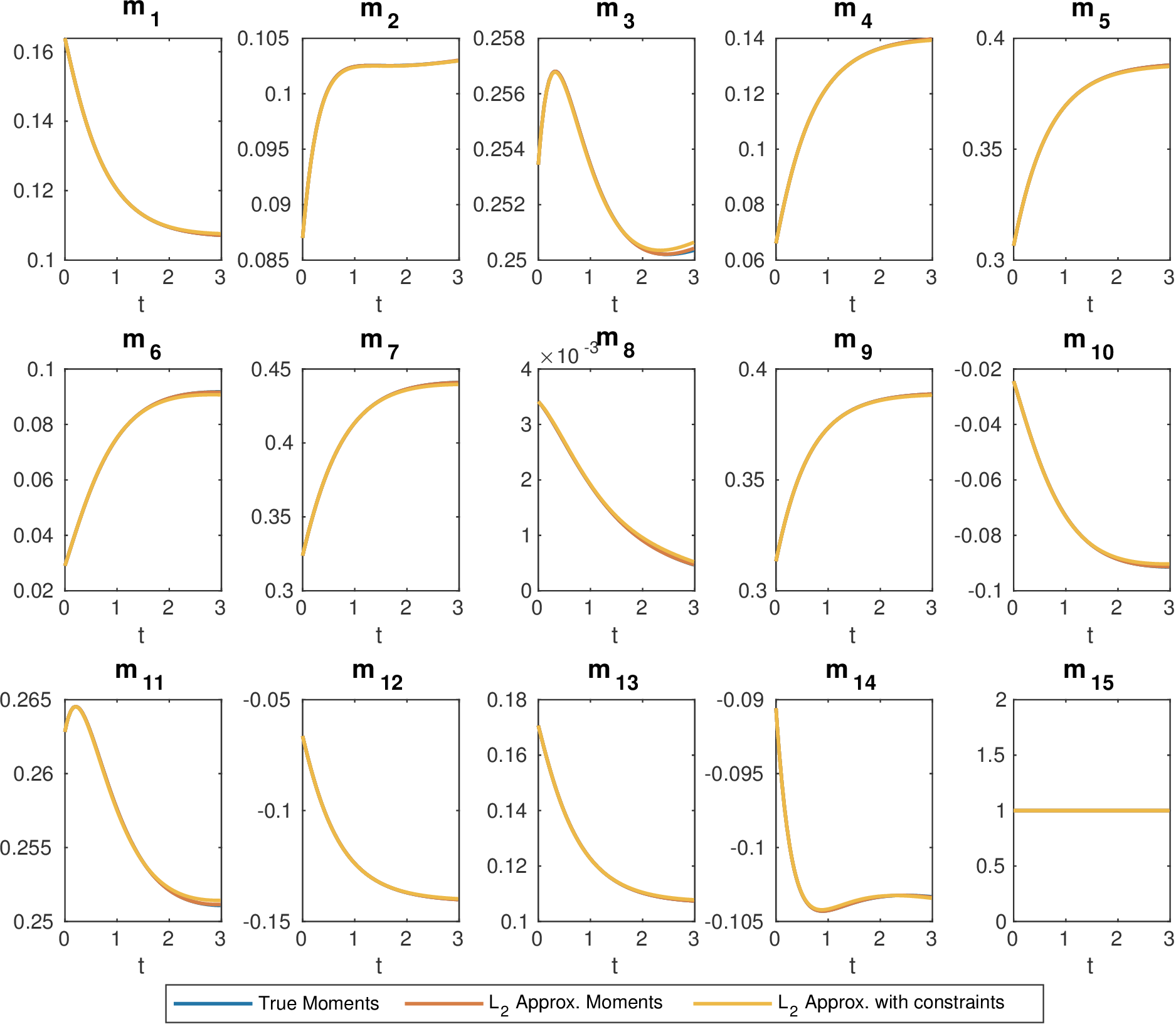}
  \caption{Moment trajectories for time $t \in [0,3]$, using  Gaussian kernel functions \eqref{eq:gaussian}.
  %$\phi_{i,j}(x) = \tfrac{1}{\sqrt{2\pi \sigma^2}} x^i e^{-(x - \rho_j)^2/\sigma^2}$, $i = 0,1$ and $j = 1, \ldots, 7$, and $\phi_{15}(x) \equiv 1$.
  The trajectories are for the true system, and the $L_2$ approximated system with and without constraints on the logarithmic norms.}\label{Fig:moment_traj_gauss_poly_t0to3}
\vspace{-10pt}
\end{figure}

\subsubsection{Simulation results and discussion}

The trajectories of the true and approximate moments are compared on the interval $t \in [0,3]$, and shown for the monomials in Figure~\ref{Fig:moment_traj_0to3s}, for Chebyshev polynomials \eqref{eq:chebyshev} in Figure~\ref{Fig:moment_traj_cheb_t0to3}, and for Gaussian functions \eqref{eq:gaussian} in Figure~\ref{Fig:moment_traj_gauss_poly_t0to3}.
For the cases with monomial and Gaussian kernels the optimization solver did not converge for $L_\infty$ approximation, and those results are therefore omitted.

In both the approximate models with monomials and Gaussian kernels, all the approximate moments match the true moments well in the interval $[0,3]$.
For the case with Chebyshev polynomials, some of the higher order moments start to deviate from the true moments at around time 1, whereas the errors in the lower order moments remain small throughout the whole interval $[0,3]$. Next we consider the approximate models based on $L_2$ error using Gaussian kernels with and without bounds on the logarithmic norms of the system for a longer time interval.
As shown in Figure~\ref{Fig:moment_traj_gauss_poly_t0to3} the two approximations behave well for $t \in [0, 3]$, but as shown in
Figure~\ref{Fig:moment_traj_gauss_poly_3to100s} the approximate moments have an oscillating trajectory for $t\in [3, 100]$ for the case without the logarithmic norm bound. On the other hand, the moment trajectory corresponding to the model with logarithmic norm bound has a stable behavior, which illustrates that the use of logarithmic norm bounds can have a stabilizing effect on the moment dynamics.

\begin{figure}[t]
 \centering
 \makebox[0.49\textwidth][c]{\includegraphics[width=0.5\textwidth, trim={1.2cm 0.2cm 0 0.5cm},clip]{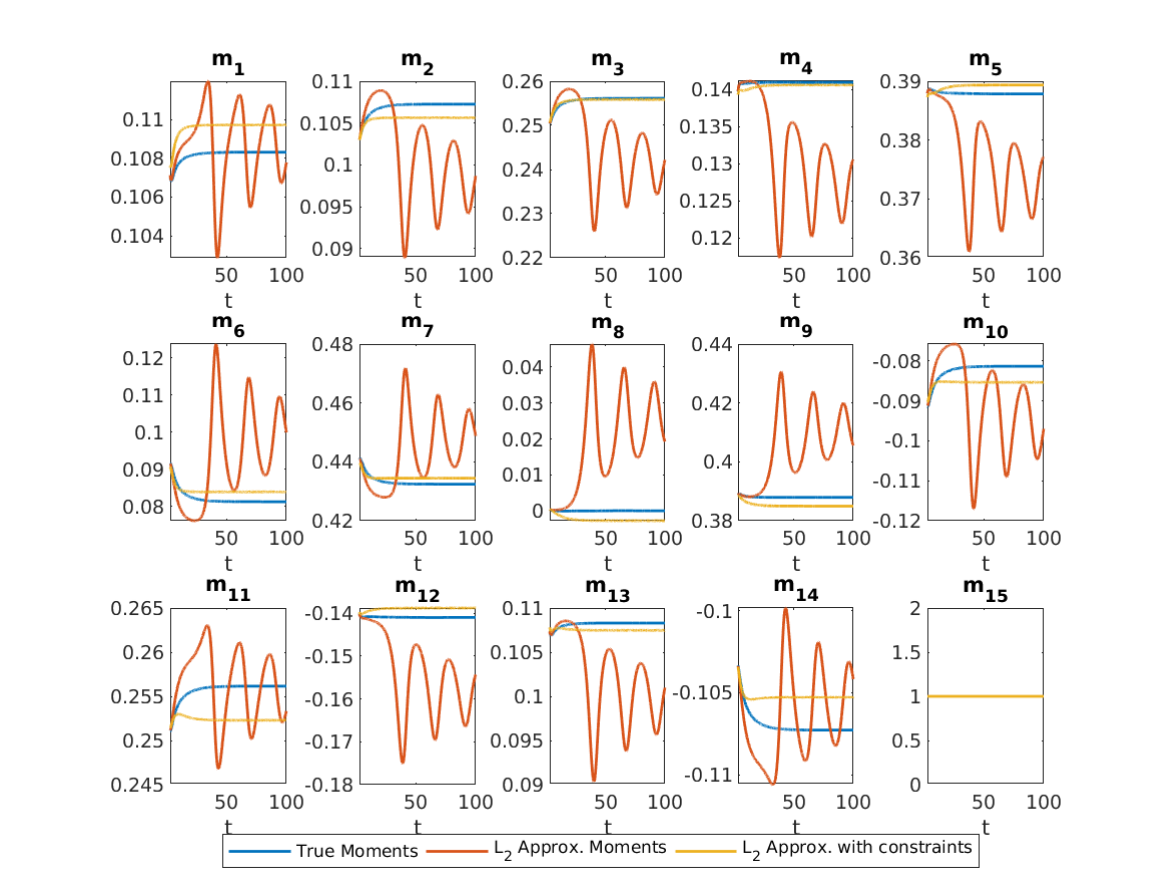}}
  \caption{Moment trajectories for time $t \in [3,100]$, using  Gaussian kernel functions \eqref{eq:gaussian}.
 Continuation of the moment trajectories from Figure~\ref{Fig:moment_traj_gauss_poly_t0to3}.
  %\sz{This fig. seems different from Fig.4-5.}
  }\label{Fig:moment_traj_gauss_poly_3to100s}
  \vspace*{-10pt}
\end{figure}

However, what is important for understanding the collective behavior of the underlying system is the information carried about the distribution of particles by the approximate moments.
In Figure~\ref{Fig:reco} we present total variation reconstructions \eqref{eq:reco_opt_prob} performed at times $t= 3$ and $t = 100$. %Therein the
The figure shows reconstructions from the true and approximate moments,
using the monomials as kernel functions, as well as  histograms of the true particle distributions.
From the results in Figure~\ref{Fig:reco} we see that the approximate moments capture the behavior of the overall system quite well
%From the reconstructions in Figure~\ref{Fig:reco} we see that the approximated moments capture the essential overall behavior of the underlying system quite well,
and that the difference in the true and approximate moments only gives rise to a small difference between the reconstructed distributions. Reconstructions from true and approximate moments at the time points $t=3$ and $t=100$ show decent results also for Chebyshev and Gaussian kernel functions, but are omitted due to space considerations. It is somewhat surprising, in particular for the Gaussian kernels, that the reconstruction is good also for time $t=100$, since there are errors in the approximate moments as seen in Figure~\ref{Fig:moment_traj_gauss_poly_3to100s}. This could possibly be explained by the fact that the approximate dynamics manages to capture the correct steady state of the true system. Note that these are only nominal reconstructions and that a more thorough analysis needs to be performed in order to determine, e.g., bounds on the number of agents in a certain region (cf. \eqref{eq:minmaxmass}). This will be subject to further research.

\begin{figure}[t]
 \centering
 \makebox[0.49\textwidth][c]{\includegraphics[width=0.5\textwidth, trim={1cm 0.2cm 0 0.9cm},clip]{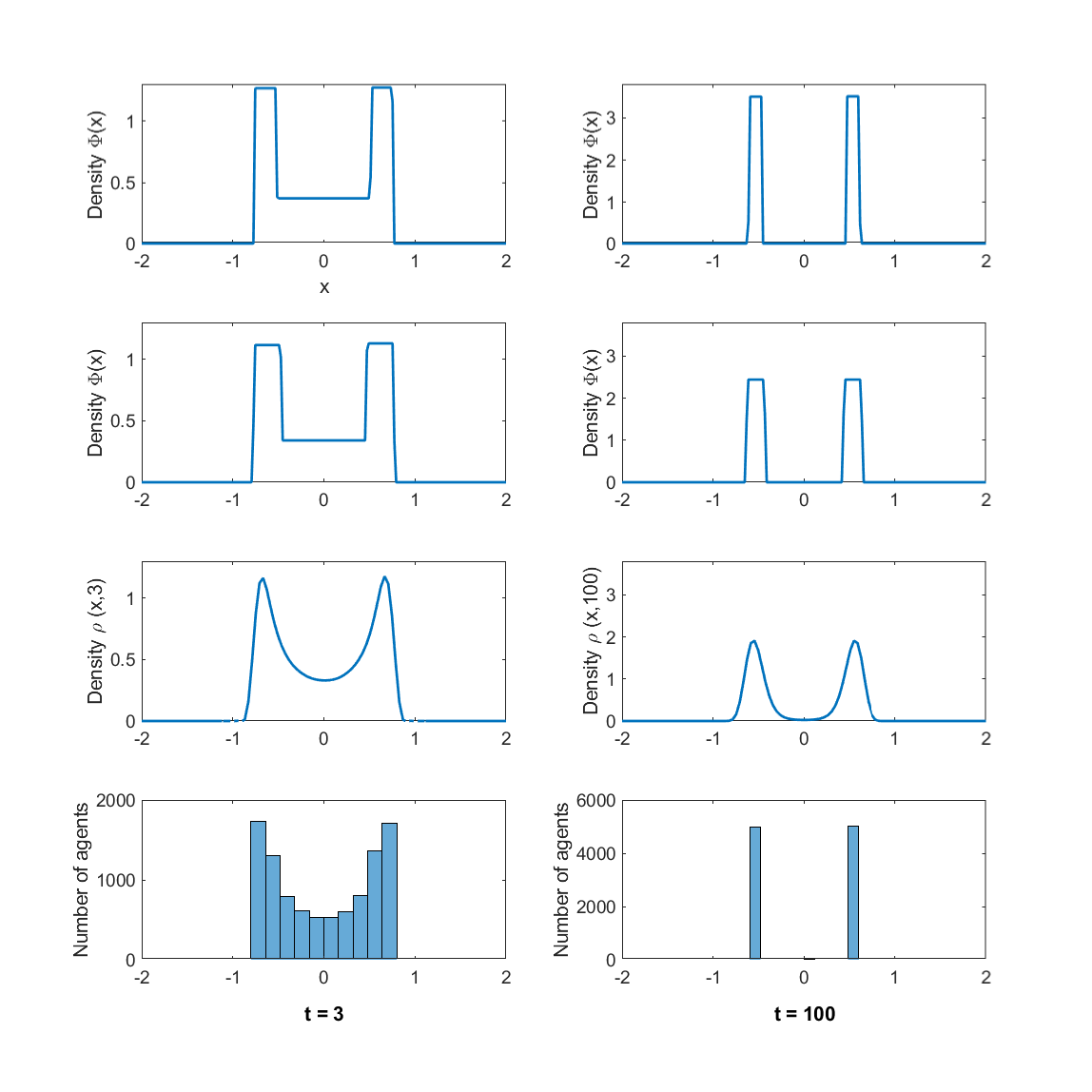}}
 \vspace{-25pt}
\caption{
%Distributions of agents solved by different methods, at times $t=3$ and $t=100$.
Estimated and true distribution of agents at times $t=3$ and $t=100$.
The first two rows show reconstructions of the distribution density $\Phi(x)$ from monomial moments using \eqref{eq:reco_opt_prob}, where the first row is using true moments and the second row is using approximate moments obtained via $L_2$ approximation of the dynamics.
The third row is the density function $\rho(x,t)$ obtained by solving the Vlasov equation \eqref{eq:McKean_Vlasov_eqn}, and the bottom row is a histogram representation of the true distribution.
 % The bottom row is a histogram representation of the true distribution.
%\caption{Reconstructions from monomial moments using \eqref{eq:reco_opt_prob}, histogram of the true particle distribution \reva{and the solution of the Vlasov equation}, at times $t=3$ and $t=100$. The top row is a reconstruction of the distribution density $\Phi(x)$ using the true moments, and the middle row is using the moment dynamics obtained via $L_2$ approximation.% The bottom row is a histogram representation of the true distribution.
}\label{Fig:reco}
\end{figure}

\subsubsection{Solving by mean-field method}
Another method that can be used for computing the macroscopic evolution of \eqref{eq:Dynamics_Toy} is the mean-field approach, in which the occupation measure $d\mu_t$ in \eqref{eq:moment_rep} is approximated\footnote{The occupation measure $d\mu_t$ converges weakly to $\rho(\cdot, t)dx$ as the number of particles $N\to \infty$.}
by the density $\rho(x,t)$.
%absolutely continuous measure $d\mu^\infty_t$.
%by which the occupation measure \eqref{eq:moment_rep} converges weekly to the absolute continuous measure $d\mu^\infty_t$ as the number of particles $N\to \infty$.
%Let the Lebesgue integrable function $\rho: \mathbf R^d \times \mathbf R^+ \to \mathbf R$ be the density function of $d\mu^\infty_t$, i.e.,
%\[
%d\mu^\infty_t(A)= \int_A \rho(x,t) dx,
%\]
%for any $t\geq 0$ and Borel subset $A$.
%The mean filed theory leads to that
The density function $\rho$ is governed by the \textit{Vlasov} equation \cite{dobrushin1979vlasov, jabin2017mean}:
\begin{equation}\label{eq:McKean_Vlasov_eqn}
\frac{\partial}{\partial t} \rho(x,t) \!=\! -\nabla_x \cdot \Big[ \rho(x,t) \Big(f(x) +\! \int_K \!\! g(x,y) \rho(y, t) dy \Big)\!\Big],
\end{equation}
where $f$ is the spatial vector field and $g$ is the interaction law, corresponding to the dynamics \eqref{eq:Dynamics_Toy}.
The partial differential equation \eqref{eq:McKean_Vlasov_eqn} is a conservation law without a diffusion term. A popular approach for solving such PDEs is the Lax-Friedrichs method \cite{leveque2002finite}, which by introducing artificial viscosity manages to maintain numerical stability. % even with a noncontinuous initial density.

We implement the Lax-Friedrichs method to solve \eqref{eq:McKean_Vlasov_eqn} numerically, where the initial density $\rho(x,0)$ is chosen as the histogram%
\footnote{We use the histogram on $K=[-2,2]$ with 200 intervals.}
generated from the initial particle positions $\{x_i(0)\}_{i=1}^N$.
The resulting density $\rho(\cdot,t)$ at times $t=3$ and $t=100$ is shown in the third row of Figure~\ref{Fig:reco}. From the figure we see that the obtained estimate agrees well with the true distribution.%  which presents a similar distribution as two reconstructed distributions in the figure.}
\footnote{When solving the PDE model, the artificial viscosity term results in a biased solution and the density function is considerably smoothed, as can be seen in Figure~\ref{Fig:reco}. To reduce the artificial viscosity while guaranteeing stability of the method both the time grid and the space grid need to be refined, resulting in increased computational complexity.}

To compare the accuracy to two methods, Figure~\ref{Fig:Wasserstein-1 error} shows the $L_1$ distance between the distribution functions,%
\footnote{That is, we use the distance $W_1(d\mu_0,d\mu_1)=\int|\int_{-\infty}^{x}(d\mu_0-d\mu_1)|dx.$}
which in the one-dimensional case ($d=1$) coincides with the Wasserstein-1 distance \cite[Thm. 2.9]{bobkovone}, between the true distribution of agents and the two approximate solutions obtained using the moment-based method or by solving \eqref{eq:McKean_Vlasov_eqn}.
The figure shows that solving \eqref{eq:McKean_Vlasov_eqn} gives a more accurate estimate in the beginning, while the moment-base method gives a more accurate estimate for larger times. This may be caused by the artificial viscosity term introduced in the numerical method.

%\reva{To compare the accuracy to two methods, we present the Wasserstein-1 distance\footnote{\reva{In one dimensional case, i.e., $d=1$, let two measure be $\rho_1, \rho_2 \in \mathcal M_+(K)$, then Wasserstein-1 distance $W_1(\rho_1,\rho_2)= \int_{-\infty}^{\infty} |P_1(x)-P_2(x)|dx$, where cumulative distribution functions $P_1(x)= \rho_1((-\infty, x])$ and $P_2(x)= \rho_2((-\infty, x])$ \cite[Thm. 2.9]{bobkovone}.}}
% of the true distribution from the distributions obtained by different methods in Figure~\ref{Fig:Wasserstein-1 error}. In the figure,
% we see that the proposed moment method gives a decent reconstructed distribution during the whole time horizon $t\in [0, 100]$. On the contrast, the Vlasov equation only results in a more accurate solution in the beginning, but the long-term accuracy decays with the time. Note this may be caused by the artificial viscosity term introduced in the numerical method.
%}

\begin{figure}[tb]
 \centering
  \includegraphics[width=0.45\textwidth]{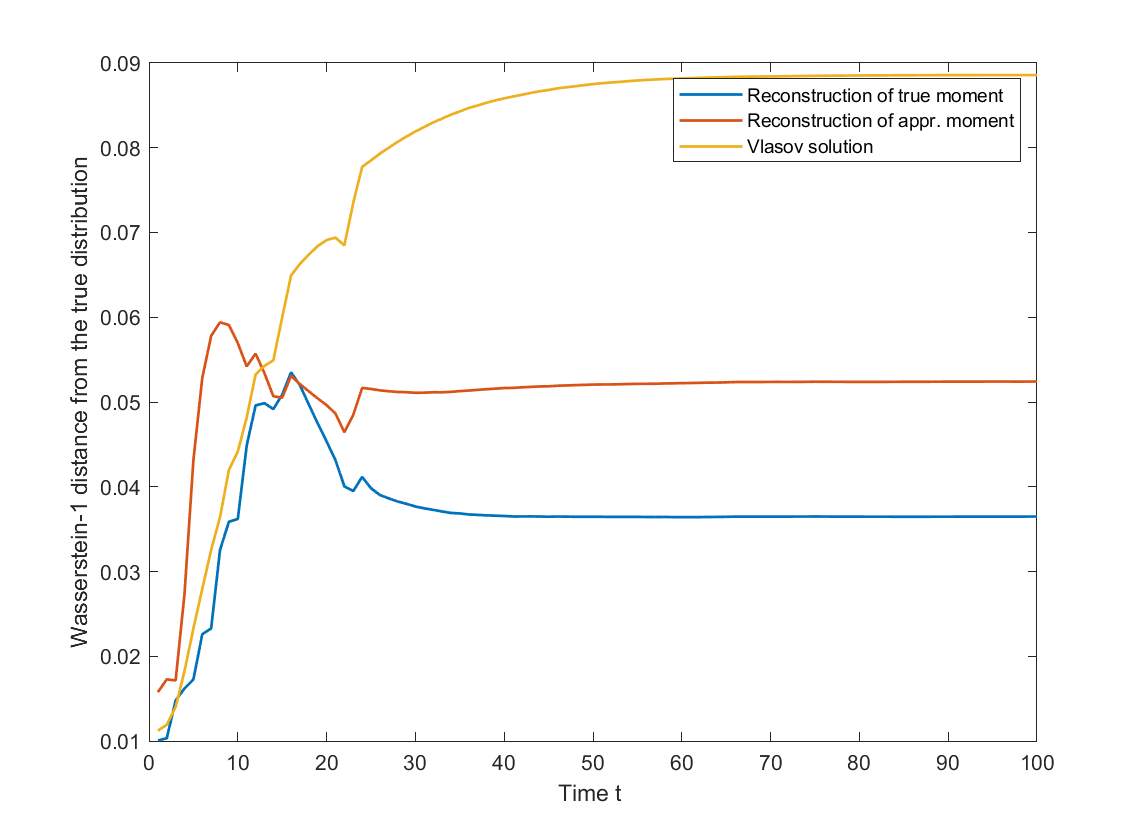}
  \caption{$L_1$ distance between the true distribution function of agents and the estimates obtained as described in Figure~\ref{Fig:reco}. %of true and estimated moments using monomial kernels, and the true from the distribution density $\rho(x,t)$ by solving Vlasov equation \eqref{eq:McKean_Vlasov_eqn}, during time $t\in [0, 100]$.
}\label{Fig:Wasserstein-1 error}
\end{figure}

Comparing the complexity of the two methods: in the mean-field methods it is easy to obtain the models (kinetic PDEs) given the individual dynamics, but solving the resulting PDE is computationally expensive. Instead, in the moment-based method, the main computational effort is in computing the model for the moment dynamics, while solving the obtained ODE model is easier.
Moreover, the latter method decouples the discretization in time and space, in contrast to the former.
In particular, let $N_x$ and $N_t$ be the numbers of grid points in space and time, respectively. The computational complexity of solving the Vlasov equation is $\mathcal{O}(N_x^2  N_t)$.
To maintain numerical stability with a small viscosity term, the CFL condition \cite{leveque2002finite} requires the number of grid points $N_t$ to be at least $\mathcal{O}(N_x)$.
In the moment method, the complexity of solving the obtained ODE is $\mathcal{O}(N_t  M^3)$, and the complexity of obtaining a moment model (using $L_2$ approximation) is $\mathcal{O}(N_x M^2 + M^7)$, where $M$ is the number of kernels. As can be seen from this, the moment approach decouples the dependence of time and space into two separate procedures. % to avoid solving them simultaneously as in PDE methods.

\subsection{An example with leaders: Pedestrian dynamics}
In this subsection the collective behavior of a pedestrian crowd is modeled and simulated via the proposed method. It is shown that the moment-based modeling has practical potential and can reduce the computational complexity in applications of crowd simulation and crowd control.

We consider the pedestrian crowd with the leader-follower structure \cite{helbing2009pedestrian, yang2015shaping}, in which each general public acts as a follower and the individuals in charge of guiding the public as leaders.  Since the leaders have better knowledge of the whole environment, they are able to lead the crowd to reach certain goals, for example in an evacuation scenario the rescue workers (leaders) are sent to guide people (followers) escaping from a certain region in the safest and most efficient way.
We denote the positions of followers and leaders by  $x_i, y_j \in K \subset \mR^2$, respectively, where $i=1, \dots, N$ and $j=1, \dots, N_L$. Each follower is governed by the dynamics \cite{yang2015shaping}
\begin{equation*}
\dot x_i = \frac{1}{N} \sum_{j=1}^{N} g(x_i,x_j)+\sum_{j=1}^{N_L} \eta(x_i,y_j),
\end{equation*}
with the velocities specifying the interactions and leader-follower dynamics defined as
\begin{align}\label{eq:simu:Dyn_crowd}
g(x,y)&=\frac{4.8}{\|x-y\|+0.1} e^{-\frac{2\|x-y\|}{5}} (x-y),\\
\eta(x,y)&=\left(0.09+6 e^{-\frac{\|x-y\|}{50}} -\frac{6}{\|x-y\|+0.1}\right)(y-x). \nonumber
\end{align}

A pedestrian crowd with $N=10^3$ followers and $N_L=4$ leaders is simulated within a compact region $K=[-2,2]\times [-2,2]$. To focus the current work on modeling, we directly assign the trajectories of the leaders traversing the region $K$ with some sinusoidal detours as shown by the solid lines in Figure~\ref{Fig:Crowd_behavior}. The initial positions of the followers are drawn from a uniform distribution in the region $K_0=[-1.5,1.5]\times [-1.5,1.5]$,
%$K_0=\{x \in \mR^2: \|x\|_\infty\le 1.5\}$,
which can be seen from the first snapshot in Figure~\ref{Fig:Crowd_behavior}. The other snapshots illustrate the motion of the crowd and it can be seen that the crowd formation can evolve in rather intricate patterns when being guided by the 4 leaders.%, the overall crowd also traverse region $K$ and moreover its formation can evolve with a rather intricate pattern.

 \begin{figure}[tb]
 %\centering
 %\hspace{-40pt}
%  \includegraphics[width=0.6\textwidth]{Crowd_Behavior.eps}
  \makebox[0.49\textwidth][c]{\includegraphics[width=0.5\textwidth, trim={0.6cm 0 0 0},clip]{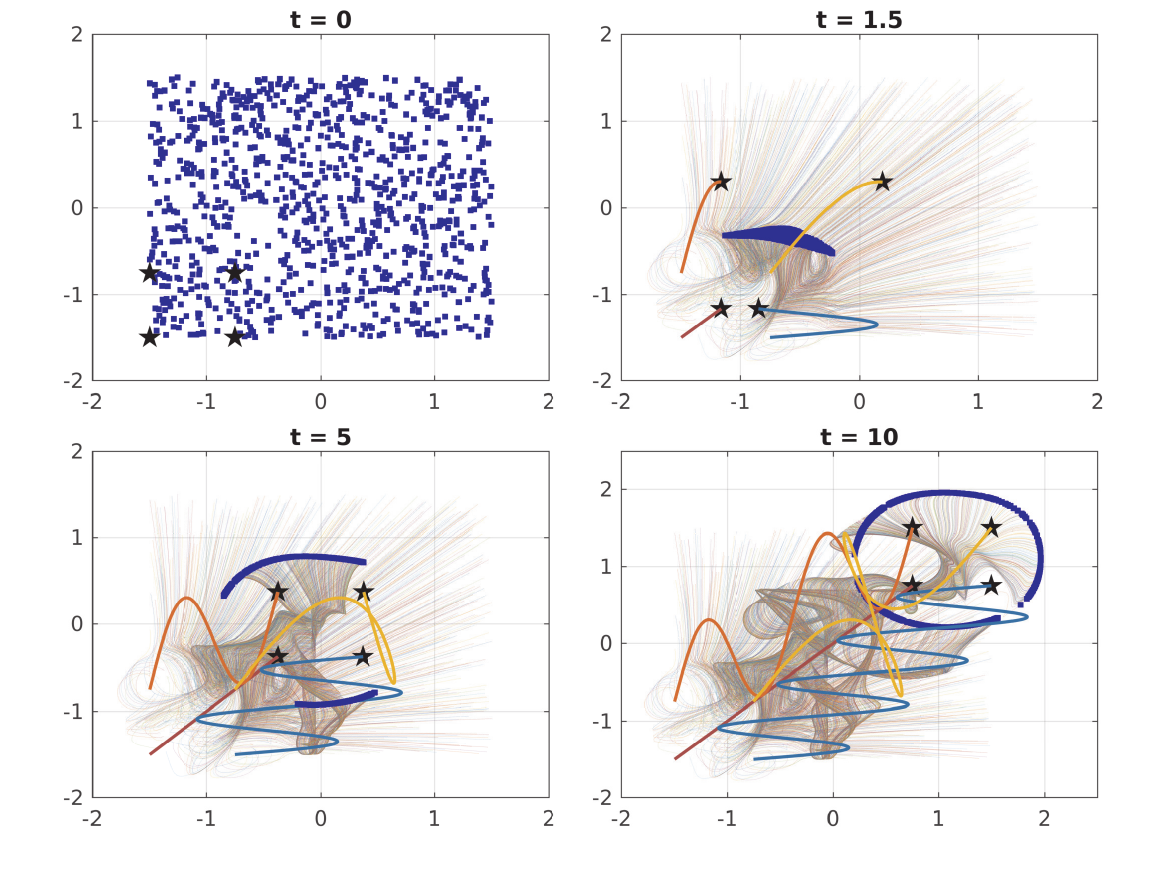}}
  \vspace{-20pt}
  \caption{ Trajectory snapshots of a pedestrian crowd with $10^3$ followers and four leaders at four time instants. Black pentagrams represent leaders and blue squares represent followers. The trajectories of leaders and followers are depicted by solid line and dashed line, respectively.}\label{Fig:Crowd_behavior}
\end{figure}

In this example, we use the polynomial kernels
\[
\phi_{k,\ell}(x,y)=\psi_{k,\ell}(x,y)=x^ky^\ell \quad \mbox{ for } (x,y) \in K
\]
where $k, \ell$ are positive integers satisfying $0 \leq k+\ell \leq 7$.
According to the proposed approach, the $L_2$ approximations are carried out for minimizing \eqref{eq:errorInter} and \eqref{eq:ApproSysLeader1} with functions $g(x,y)$ and $\eta(x,y)$ in \eqref{eq:simu:Dyn_crowd}. The trajectories of the approximate moment system are compared with those of the real moment system in Figure~\ref{Fig:crowd_moment_traj}. The simulation result shows that the proposed method predicts the moments quite well under this scenario with intricate leader interactions.

\begin{figure}[tb]
% \centering
\hspace{-10pt}
 \makebox[0.5\textwidth][c]{\includegraphics[width=0.49\textwidth, trim={2cm 0cm 1.7cm 1cm}, clip]{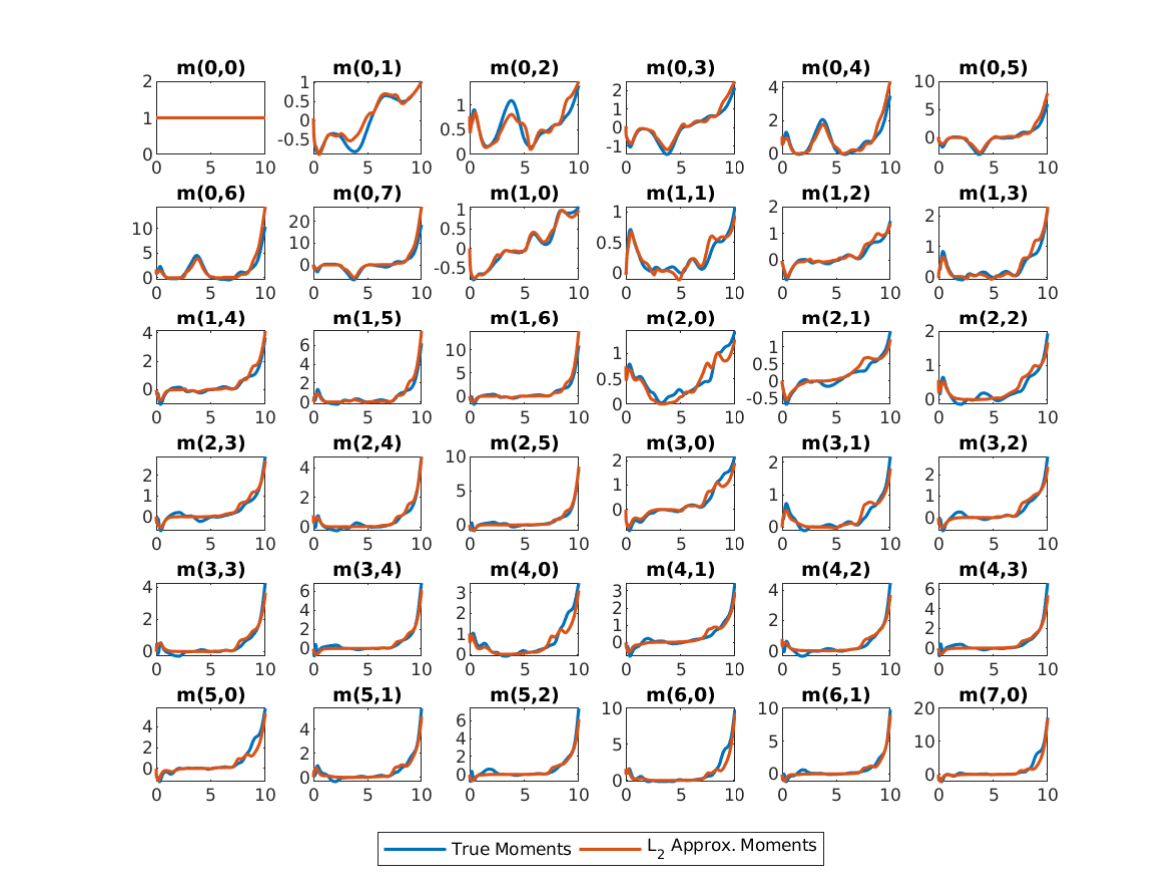}}
  \caption{Moment trajectories for time (abscissa-axis)  $t \in [0,10]$, using polynomial kernel functions, where subfigure ${m}(k,\ell)$ is for the moment corresponding to kernel $\phi_{k,\ell}(x,y)=x^ky^\ell$. The trajectories are for both the true system and the $L_2$ approximated system.}\label{Fig:crowd_moment_traj}
\end{figure}

Moreover, total variation reconstructions \eqref{eq:reco_opt_prob} are also performed to recover the distributions from the moments at times $t= 1.5$ and $t = 10$. Figure~\ref{Fig:crowd_reco} shows the reconstructions from the true and approximate moments, as well as histograms from the true crowd distribution. These results show that although the order of the system is reduced dramatically by considering the moment system instead of individual dynamics, much of the information related to position and formation of the crowd is captured accurately by the moment dynamics.

\begin{figure}[tb]
  \makebox[0.49\textwidth][c]{\includegraphics[width=0.5\textwidth, trim={1.2cm 0.2cm 0 0.6cm},clip]{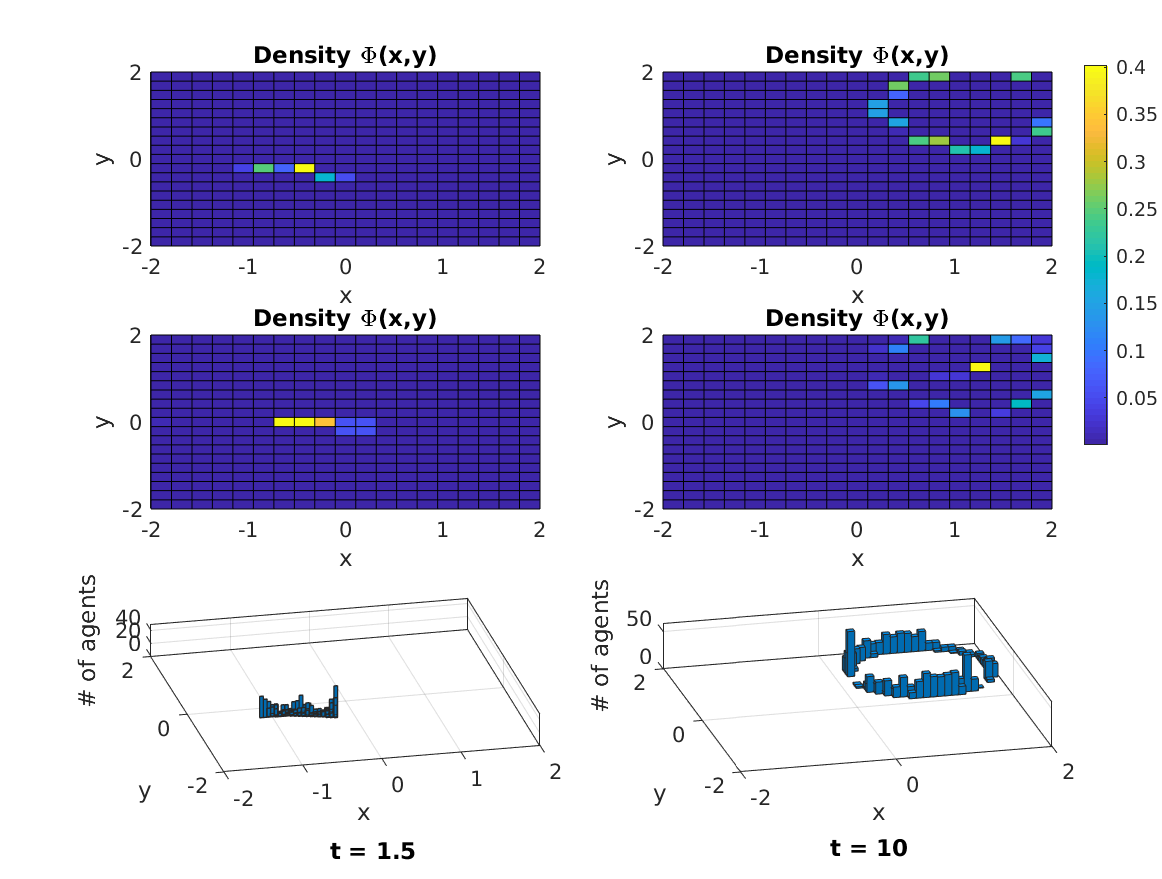}}
  \caption{Moment reconstructions for the crowd system using \eqref{eq:reco_opt_prob}, and histogram of the true pedestrain distribution. The top row is a reconstruction of distribution density $\Phi(x,y)$} using the true moments and the middle row is using the moments with dynamics obtained via $L_2$ approximation.\label{Fig:crowd_reco}
\end{figure}

\section{Conclusions and future directions}\label{sec:conclusion}
This paper introduces a moment-based approach for modeling the collective behavior of a multi-agent system, without having to track each individual agent.
This is done by approximating the dynamics of generalized moments with linear or quadratic
%or bilinear
systems,
and the approximate system is computed via a convex optimization problem where trade-offs between the accuracy and stability of the moment dynamics can be controlled.

As a final remark, we note that the approach can be generalized, in a straightforward manner, to systems with control. For example, the setup in Remark~\ref{rkm:controlProblem}, as well as the case when the agents are governed by
$\dot x_i(t)=f(x_i(t)) + Gu(t),$ for $i = 1, \ldots, N.$
The latter can be approximated by the bilinear system
\begin{equation*}%\label{eq:ApproSysBasicU}
\frac{d}{d t}
\!
\begin{bmatrix}
m_1 \\ \vdots \\ m_M
\end{bmatrix}
\!\!\approx \!\!
\begin{bmatrix}
a^1_{1} & \cdots & a^1_{M} \\
\vdots  &        & \vdots  \\
a^M_{1} & \cdots & a^M_{M}
\end{bmatrix}
\! \!
\begin{bmatrix}
m_1 \\ \vdots \\ m_M
\end{bmatrix}
 \!+ \!
\begin{bmatrix}
m^T \Lambda_1^T G \\ \vdots \\ m^T \Lambda_M^T G
\end{bmatrix}
u,
\end{equation*}
where $\Lambda_k = [\lambda^k_{j, \ell}]_{j, \ell = 1}^M$, in a similar manner as described in Section~\ref{sec:basicSystems}.
In this paper we focus on the modeling, and we leave the control aspects to be further studied.
%\end{remark}

There are several open questions that would strengthen the proposed framework. One is how to optimally select the kernel functions in order to guarantee an accurate moment dynamics while also ensuring that the inverse problem provides an accurate reconstruction of the distribution. This may depend on the specific macro-scale properties of the reconstruction that one wants to recover. Another open problem is an asymptotic theory that shows that the reconstruction error converges to zero (in a suitable sense) as the number of moments goes to infinity. A third problem of interest is how to design a control for the proposed moment-based model in order to, e.g., steer the behavior towards a desired outcome.
%Finally, we note that the approach can be generalized, in a straightforward manner, to systems with control.
%\begin{remark}\label{rmk:containU}

%In the future work, some control problems of the proposed model deserve further study, which may include the invariance of the moments dynamics with respect to ${\mathfrak C}^+$, and the controllability of the resulted systems with the controlled leaders. Besides, how to extend this method to the system with 2-order dynamics is also of practical interest.

%\cite{zeng2016ensemble}

\vspace{-1em}
\section*{Acknowledgemewnts}
The authors would like to thank Patrick Henning for the valuable input on solving the Vlasov equations.

%%%%%%%%%%%%%%%%%%%%%%%%%%%%%%%%%%%%%%%%%%%%%%%%%%%%%%%%%%%%%%%%%%%%%55
\appendix

\allowdisplaybreaks

%\rev{
%\subsection{Proof of Remark \ref{rem:ass1}}\label{sec:proof_rem_ass_1}
%Since $V(x)$ is positive definite and radially unbounded, there are class $\mathcal K_\infty$ functions $\alpha_1$ and $\alpha_2$ such that
%\[
%\alpha_1(\|x\|) \leq V(x) \leq \alpha_2(\|x\|).
%\]
%We can easily show that  the set $\Omega=\{x: V(x)\leq \alpha_2(c)\}$ is invariant under dynamics \eqref{Eq:BasicModels}. Thus if $x_i(0) \in \Omega$ for all $i=1, 2,\cdots, N$, the compact set $K = \Omega^N$. In addition, if there exist $i$ such that $x_i(0) \not\in \Omega$, $K= \Omega^N \cup \{X \in \mathbb R^{Nd} : x_i \leq \alpha_1^{-1} \circ V(x_i(0)), \forall i \} $. \hfill$\square$
%}

\subsection{Proof of Theorem \ref{thm:ErrorBoundBasic}}\label{sec:app1}

  Let $E_k(t)=\int_{x\in K} \varepsilon_k (x) d\mu_t(x) $, with $\varepsilon_k (x)$ defined in \eqref{eq:errorBasic}, then the moment system (\ref{eq:MomentSysBasic})  can be rewritten as
  \[
    \dot m_k(t) = \sum_{\ell = 1}^M a^k_{\ell} m_\ell(t) + E_k(t).
  \]
  Since by \eqref{eq:ApproSysBasic} the approximate system  is $\dot{\overline{m}}_k = \sum_{\ell = 1}^M a^k_{\ell}\overline{m}_\ell$, the error dynamics $\Delta m_k = m_k -\overline{m}_k$ satisfies
  \begin{equation}\label{eq:pf:ErrorSystem}
    \Delta \dot m(t) = A \Delta m(t) +E(t),
  \end{equation}
   where $E(t):=[E_1(t), \dots, E_m(t)]^T$. By solving the linear system of ODEs \eqref{eq:pf:ErrorSystem}, using Lemma~\ref{lem:expBound} and the fact that the total mass of the distribution \eqref{eq:moment_rep} is $1$, the error of the moments can be bounded by
   \begingroup
   \allowdisplaybreaks
   \begin{align*}
     & \|\Delta m(t)\| \leq \|\Delta m(0)\| \,\|e^{At}\|  +\|\int_{0}^{t} e^{A(t-s)} E(s) ds\|   \\
     & \leq \|\Delta m(0)\| \, e^{\nu[A]t}  +\int_{0}^{t} e^{\nu[A](t-s)} \|E(s)\| ds \\
     & \leq \|\Delta m(0)\| \, e^{\nu[A]t}  +\int_{0}^{t} e^{\nu[A](t-s)} \sqrt{\sum_{k} \max_{x\in K} \varepsilon_k (x)^2} ds.
   \end{align*}
   \endgroup
   By computing the last integral, the assertion follows.\hfill$\square$

\subsection{Proof of Theorem \ref{thm:ErrorBoundInter}}\label{sec:app2}
\begin{figure*}[t]
% ensure that we have normalsize text
%\normalsize
\footnotesize
% Store the current equation number.
%\newcounter{MYtempeqncnt}
%\setcounter{MYtempeqncnt}{\value{equation}}
% Set the equation number to one less than the one
% desired for the first equation here.
% The value here will have to changed if equations
% are added or removed prior to the place these
% equations are referenced in the main text.
%\setcounter{equation}{5}
%% For placing on bottom of page (need to use stfloats-package as well)
%\vspace*{4pt}
%\hrulefill
\begin{align}
      & D_t^+ \|m(t)-\overline{m}(t)\| = \limsup_{h \to 0^+} \frac{\|m(t+h)-\overline{m}(t+h)\|-\|m(t)-\overline{m}(t)\|}{h} \nonumber \\
      & = \limsup_{h \to 0^+} \frac{\|m(t) + h \left( \mB(m(t)) + E(t) \right) + \ordo(h^2) - \big(\overline{m}(t) + h \mB(\overline{m}(t)) + \ordo(h^2) \big)\|-\|m(t)-\overline{m}(t)\|}{h} \nonumber \\
      & \leq  \limsup_{h \to 0^+} \frac{\|m+h \mB(m)- \big(\overline{m} + h \mB(\overline{m})  \big)\|-\|m-\overline{m}\|}{h} + \|E(t)\| \nonumber %\\
      %&
      =  \limsup_{h \to 0^+} \frac{\| [I + h\mB] (m)  - [I + h\mB](\overline{m})\|-\|m-\overline{m}\|}{h} + \|E(t)\| \nonumber \\
      & \leq \limsup_{h \to 0^+} \frac{ L[I+h\mB] \, \|m-\overline{m}\|-\|m-\overline{m}\|}{h} +\|E(t)\| %\nonumber \\
      %&
      = M[\mB] \|m-\overline{m}\| +\|E(t)\| \label{eq:long_ineq}.
\end{align}
% Restore the current equation number.
%\setcounter{equation}{\value{MYtempeqncnt}}
%For placing on top of page
\vspace{-10pt}
\hrulefill
\vspace*{-10pt}
\end{figure*}

By \eqref{eq:errorInter} the dynamics \eqref{eq:true_dyn} can be rewritten as
\begin{equation*}
  \dot m_k(t)= m(t)^T B_k m(t) + E_k(t),
\end{equation*}
where $E_k(t)= \int_{x\in K} \int_{y\in K} \varepsilon_k  d\mu_t(x)d\mu_t(y)$. By introducing the notation $\mB(m):=(m^T B_1^Tm, \dots, m^T B_M^Tm)$ and $E(t)=(E_1(t), \dots, E_M(t))$, we write the dynamics in vector form as $\dot m(t)= \mB(m) + E(t)$.
Similarly to the proof of Theorem~\ref{thm:ErrorBoundBasic}, we now investigate the dynamics of the norm of the error: $\| \Delta m(t) \| = \|m(t)-\overline{m}(t)\|$. To this end, we consider the
Dini derivative $D_t^+$ (also known as the upper (right-hand) derivative \cite[pp. 110-111]{royden2010real}) %\cite{khalil2000nonlinear})
of $\|m(t)-\overline{m}(t)\|$. % with respect to time $t$.
This gives the inequalities shown in \eqref{eq:long_ineq}, where the second equality is a Taylor series expansion, the first inequality is the triangle inequality (and  note that $\lim_{h\to 0}\ordo(h^2)/h=0$), the second inequality follows by \eqref{Def:L}, and the last equality follows by \eqref{eq:DefNonLN} since the limit exists.

Moreover, by Lemma~\ref{lem:LNormforC1} we have
\begingroup
\allowdisplaybreaks
\begin{align*}
M[\mB] &= \sup_{m \in D} \nu[ \nabla \mB(m)] =\sup_{m \in D} \nu\left[
	\begin{pmatrix}
	m^T(B_1+B_1^T)\\
	\vdots\\
	m^T(B_M+B_M^T)
	\end{pmatrix}
	\right] \\
&=\sup_{m \in D} \nu\left[
	\sum_{\ell=1}^M m_\ell
	\begin{pmatrix}
	b_{\ell, 1}^1+b_{1,\ell}^1&\cdots&b_{\ell, M}^1+b_{M,\ell}^1\\
	\vdots&&\vdots\\
	b_{\ell, 1}^M+b_{1,\ell}^M&\cdots&b_{\ell, M}^M+b_{M,\ell}^M
%	[b_{\ell, j}^M]_{j=1}^M+[b_{j,\ell}^M]_{j=1}^M
	\end{pmatrix}
	\right]\\
%&=\sup_{m \in D} \nu\left[
%	\sum_{\ell=1}^M m_\ell
%	\begin{pmatrix}
%	[b_{\ell, j}^1]_{j=1}^M+[b_{j,\ell}^1]_{j=1}^M\\
%	\vdots\\
%	[b_{\ell, j}^M]_{j=1}^M+[b_{j,\ell}^M]_{j=1}^M
%	\end{pmatrix}
%	\right]\\	
& \leq \sup_{m \in D} \sum_{\ell=1}^M \nu\left[ m_\ell \widetilde{B}_{\ell}	\right]=: \beta.
\end{align*}
\endgroup
Combining this with \eqref{eq:long_ineq}, it follows that
\[
D_t^+ \| \Delta m(t) \| := D_t^+ \|m(t)-\overline{m}(t)\| \leq \|E(t)\| + \Bbound \|\Delta m(t)\|,
\]
%Then, similar to the proof of Theorem~\ref{thm:ErrorBoundBasic}, it is not difficult to obtain the assertion.
%\ar{I find that we need Gr\"onwall's inequality here to complete to proof...}
and integrating the above inequality gives that
\[
\|\Delta m(t)\| - \|\Delta m(0)\| \leq \int_0^t \| E(s) \| ds + \int_0^t \Bbound \|\Delta m(s)\| ds.
\]
If $\Bbound = 0$ the conclusion follows since the distribution \eqref{eq:moment_rep} has total mass $1$. If $\Bbound \neq 0$, using the Gr\"onwall-Bellman inequality
(see, e.g., \cite{khalil1992nonlinear}) we get that
\begin{align*}
& \|\Delta m(t)\| \leq  \|\Delta m(0)\| + \int_0^t \| E(s) \| ds \\
& \phantom{xxx} + \int_0^t \left( \|\Delta m(0)\| + \int_0^s \| E(\tau) \| d\tau \right) \Bbound e^{\Bbound(t-s)} ds \\
& \phantom{xxx} \leq \|\Delta m(0)\| + t \, \sqrt{\sum_{k=1}^M \max_{x,y\in K} \varepsilon_k (x,y)^2} \\
& \phantom{xxx} + \! \int_0^t \! \! \left( \! \! \|\Delta m(0)\| + s   \sqrt{ \sum_{k=1}^M \max_{x,y\in K} \varepsilon_k (x,y)^2} \right) \! \! \Bbound e^{\Bbound(t-s)} ds
\end{align*}
and the result follows by straight-forward integration. \hfill$\square$

%%%%%%%%%%%%%%%%%%%%%%%%%%%%%%%%%%%%%%%%%%%%%%%%%%%%%55
\subsection{Proof of Lemma~\ref{thm:P_k,nMontonicity}}\label{sec:Apx_Pf_montonicity}
%\subsection{Bounding the error $\mathbf{E}_n(x^k)$ in terms of  $P_{k,n}$}\label{sec:Apx_Pf_montonicity}

%The purpose of this \rev{appendix} is to prove Lemma~\ref{thm:P_k,nMontonicity} which will be used for proving Lemma~\ref{thm:Convergence_MomentSys_m}.
We start with a lemma presenting some properties of the $L_\infty$ approximation error $\mathbf{E}_n(x^k)$.
Note that in the entire Appendix~\ref{sec:Apx_Pf_montonicity}, for ease of notation, by $\| \cdot \|_{\infty}$ we denote $\| \cdot \|_{\Linf{[-1,1]} }$.

\begin{lemma}\label{thm:PropertiesEn_x^k}
  The $L_\infty$ approximation error of $x^k$ by polynomials $\mathcal{P}_n[-1,1]$, denoted $\mathbf{E}_n(x^k)$, has the following properties,
  \begin{enumerate}[(a)]%[label=(\alph*)]
    \item $\mathbf{E}_n(x^k) \geq \mathbf{E}_{n+1}(x^k)$;
    \item $\mathbf{E}_n(x^k) \geq \mathbf{E}_{n+1}(x^{k+1})$;
    \item If $n$ and $k$ are both even, $\mathbf{E}_n(x^k)=\mathbf{E}_{n+1}(x^k)$;
    \item If $n+k$ is odd, $\mathbf{E}_n(x^k) \geq \mathbf{E}_{n}(x^{k+1})$.
  \end{enumerate}
\end{lemma}

\begin{proof}
  Note that the $L_\infty$ approximation of any continuous function by elements in $\mathcal{P}_n$ is unique (cf. \cite[Thm.~7.6]{powell1981approximation}).
  In the following proof, we denote $p_n^*\in \mathcal{P}_n$ and $p_{n+1}^*\in \mathcal{P}_{n+1}$ as the corresponding best approximations of $x^k$.

  (a) It is straight forward. (b) Due to the fact that $\|x\|_{\infty}=1$ on the interval $[-1,1]$, we have
%  \begin{eqnarray*}
%  % \nonumber to remove numbering (before each equation)
%    \mathbf{E}_n(x^k) &=& \|x^k-p_n^*\|_{\infty} \\
%    &=& \|x\|_{\infty} \, \|x^k-p_n^*\|_{\infty} \\
%    &\geq& \|x(x^k-p_n^*)\|_{\infty} \\
%    &\geq& \mathbf{E}_{n+1}(x^{k+1}).
%  \end{eqnarray*}
  \begin{eqnarray*}
  % \nonumber to remove numbering (before each equation)
    \mathbf{E}_n(x^k) &=& \|x^k-p_n^*\|_{\infty} = \|x\|_{\infty} \, \|x^k-p_n^*\|_{\infty} \\
    &\geq& \|x(x^k-p_n^*)\|_{\infty} \geq  \mathbf{E}_{n+1}(x^{k+1}).
  \end{eqnarray*}

  (c) Since the interval we consider is symmetric, by substituting $x$ by $-x$, we have $\|x^k-p_{n+1}^*(x)\|_\infty = \|(-x)^k-p_{n+1}^*(-x)\|_\infty$. Additionally, since $x^k=(-x)^k$ when $k$ is even, the above equality becomes $\|x^k-p_{n+1}^*(x)\|_\infty = \|x^k-p_{n+1}^*(-x)\|_\infty$. On account of the uniqueness of $p_{n+1}^*$, it follows that $p_{n+1}^*(x)=p_{n+1}^*(-x)$, for all $x\in[-1,1]$, i.e., $p_{n+1}^*(x)$ is even. Therefore $\mathbf{E}_{n+1}(x^k)=\mathbf{E}_n(x^k)$.

  (d) If $k$ is odd and $n$ is even, according to (b) and (c), we have $\mathbf{E}_n(x^k)\geq \mathbf{E}_{n+1}(x^{k+1})=\mathbf{E}_n(x^{k+1})$. If $k$ is even and $n$ is odd, applying (b) and (c) gives that $\mathbf{E}_n(x^k) = \mathbf{E}_{n-1}(x^{k})\geq \mathbf{E}_n(x^{k+1})$.
\end{proof}

When $k$ is odd and $n$ is even, the next lemma shows a monotonicity relation for the parameter $P_{k,n}$ defined in (\ref{eq:p_k,n}).

\begin{lemma}\label{thm:montonicityP_k,n}
  Let $k$ and $n$ be positive integers with $k\geq n$. If in addition $k+n$ is odd, then the parameter $P_{k,n}$ satisfies
  \[
  P_{k,n} \leq P_{k+2,n}.
  \]
\end{lemma}

\begin{proof}
The parameter $P_{k,n}$ in \eqref{eq:p_k,n} can be interpreted  probabilistically. To this end, let $N_H$ and $N_T$ be the numbers of heads and tails occurring in $k$ coin tosses, and
let $N_{k,n}$ be the numbers of possible cases for which $N_H-N_T>n$. Then
\begin{equation*}
  N_{k,n}=\sum_{j > \frac{n+k}{2}}^{k} \binom{k}{j},
\end{equation*}
%The parameter $P_{k,n}$ in (\ref{eq:p_k,n}) can be interpreted as the probability that when a coin is tossed $k$ times the difference between the number of heads and the number of tails exceeds $n$. We denote $N_{k,n}$ the numbers of possible cases for which $N_H-N_T>n$, where $N_H$ and $N_T$ are the numbers of heads and tails occurring in $k$ coin tosses. Then  we have
%\begin{equation*}
%  N_{k,n}=\sum_{j > \frac{n+k}{2}}^{k} \binom{k}{j},
%\end{equation*}
and consequently $N_{k,n}=2^{k-1}P_{k,n}$ holds.
%Consider the first two tosses separately out of a total number of $k+2$ tosses. Then
Considering the first two out of $k+2$ tosses separately, we have that
\[
N_{k+2,n} = N_{k,n-2} + 2 N_{k,n} + N_{k,n+2},
\]
where the three terms in the right hand side corresponds to the cases that the first two tosses are \{(Head, Head); (Head, Tail) or (Tail, Head); (Tail, Tail)\}.
Since we have $N_{k,n}=2^{k-1} P_{k,n}$ it follows that
$
P_{k+2,n} = \frac{1}{4}\big(P_{k,n-2} + 2 P_{k,n} + P_{k,n+2}\big),
$
which implies that the assertion is true if $P_{k,n-2} + P_{k,n+2} \geq 2 P_{k,n}$. This inequality  now follows from the fact that $\binom{k}{\frac{k+n-1}{2}} \geq \binom{k}{\frac{k+n+1}{2}}$, if $n\geq 2$.
\end{proof}

The last result needed in order to prove Lemma~\ref{thm:P_k,nMontonicity} is the following one.

\begin{lemma}[{\cite[Thm. 3]{newman1976approximation}}]\label{thm:BoundE(x^k)}
  For fixed $k > n$,  the best approximation of $x^k$ on the interval $[-1,1]$ satisfies
%  \begin{equation*}
    $\frac{1}{4e}P_{k,n}  \leq \mathbf{E}_n(x^k) \leq P_{k,n}.$
%  \end{equation*}
%  where parameter
\end{lemma}

%\begin{proof}
%See Appendix~\ref{sec:Apx_Pf_montonicity}.
%\end{proof}

\begin{proof}[Proof of Lemma~\ref{thm:P_k,nMontonicity}]
  If $k=2M$ or $k=2M-1$, by Lemma~\ref{thm:BoundE(x^k)} and Lemma~\ref{thm:PropertiesEn_x^k}, it follows that $P_{2M-1,2n} \geq \mathbf{E}_{2n}(x^{2M-1}) \geq \mathbf{E}_{2n}(x^{2M})$.

  If $k < 2M-1$ and $k$ is even, $P_{2M-1,2n} \geq P_{2M-3,2n} \geq \cdots\geq P_{k-1,2n} \geq \mathbf{E}_{2n}(x^{k-1}) \geq \mathbf{E}_{2n}(x^{k})$. This equation is obtained by sequentially applying Lemma~\ref{thm:montonicityP_k,n}, Lemma~\ref{thm:BoundE(x^k)} and Lemma~\ref{thm:PropertiesEn_x^k}.

  If $k<2M-1$ and $k$ is odd, by a similar technique, we have $P_{2M-1,2n} \geq P_{2M-3,2n} \geq \cdots\geq P_{k,2n} \geq \mathbf{E}_{2n}(x^{k})$.
\end{proof}

%%%%%%%%%%%%%%%%%%%%%%%%%%%%%%%%%%%%%%%%%%%%%%%%%%%%%%%%%%%%%%%%%%%%%%%%%%%%%%%%%
\subsection{Proof of  Lemma~\ref{thm:Convergence_MomentSys_m}}\label{sec:Apx_Pf_Convergence_dynamics}
 In order to prove Lemma~\ref{thm:Convergence_MomentSys_m}, we start with a lemma that gives the error of the best $L_\infty$-approximation for a $k$-order continuously differentiable function on the interval $[-1,1]$.
 Again note that in the entire Appendix~\ref{sec:Apx_Pf_Convergence_dynamics}, for ease of notation, by $\| \cdot \|_{\infty}$ we denote $\| \cdot \|_{\Linf{[-1,1]} }$.

\begin{lemma}[cf. Thm. 16.5 and (16.50) in \cite{powell1981approximation}]\label{thm:BoundE(f)}
 Let a function $f\in \mathbf C^k[-1,1]$, then for $n \geq k$, we have
  \begin{equation*}
    \mathbf{E}_n(f) \leq \big(\frac{\pi}{2}\big)^k \frac{(n-k)!}{n!} \|f^{(k)}\|_\infty.
  \end{equation*}
\end{lemma}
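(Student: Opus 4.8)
The plan is to reduce the $k$-th order bound to a single ``one derivative'' estimate and then iterate. Concretely, I would first establish the recursion
\[
\mathbf{E}_n(f) \le \frac{\pi}{2n}\,\mathbf{E}_{n-1}(f'), \qquad n \ge 1,
\]
valid for any $f \in \mathbf{C}^1[-1,1]$. Granting this, the lemma follows by induction on $k$: applying the recursion $k$ times gives
\[
\mathbf{E}_n(f) \le \left(\tfrac{\pi}{2}\right)^k \frac{1}{n(n-1)\cdots(n-k+1)}\,\mathbf{E}_{n-k}\big(f^{(k)}\big) = \left(\tfrac{\pi}{2}\right)^k \frac{(n-k)!}{n!}\,\mathbf{E}_{n-k}\big(f^{(k)}\big),
\]
and the trivial bound $\mathbf{E}_{n-k}(f^{(k)}) \le \|f^{(k)}\|_\infty$ (approximate $f^{(k)}$ by the zero polynomial, which lies in $\mathcal{P}_{n-k}$) yields the claimed inequality. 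The hypothesis $n \ge k$ is exactly what guarantees that every index $n, n-1, \ldots, n-k$ stays nonnegative, so that each step of the recursion and the terminal bound are legitimate.

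For the recursion itself, let $p^* \in \mathcal{P}_{n-1}$ be the best $L_\infty$ approximation of $f'$ and set $h := f' - p^*$, so that $\|h\|_\infty = \mathbf{E}_{n-1}(f')$. I would introduce the degree-$n$ polynomial $q(x) := f(-1) + \int_{-1}^x p^*(t)\,dt$ together with the remainder $H(x) := \int_{-1}^x h(t)\,dt$, so that $f = q + H$ with $q \in \mathcal{P}_n$. Since subtracting an element of the linear space $\mathcal{P}_n$ leaves the best degree-$n$ approximation error unchanged, $\mathbf{E}_n(f) = \mathbf{E}_n(H)$, and it remains to prove $\mathbf{E}_n(H) \le \frac{\pi}{2n}\|h\|_\infty$. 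The crucial observation is that the crude estimate $\|H\|_\infty \le \int_{-1}^1 |h| \le 2\|h\|_\infty$ loses precisely the factor $1/n$; to recover it one must use that $h$ is a best-approximation error. By the Chebyshev equioscillation theorem $h$ attains $\pm\|h\|_\infty$ with alternating signs at $n+1$ points, hence changes sign at least $n$ times, which forces strong cancellation in the antiderivative $H$.

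The hard part is converting this oscillation into the sharp constant $\pi/2$. The cleanest route I would take is to transfer the problem to the trigonometric setting via $x = \cos\theta$: an algebraic polynomial of degree $n$ on $[-1,1]$ corresponds to an even trigonometric polynomial of degree $n$, so $\mathbf{E}_n$ of an even function equals its best even trigonometric approximation error. In that setting the one-derivative estimate with constant $\pi/2$ is exactly the classical Favard / Akhiezer--Krein inequality (the first Favard constant being $K_1 = \pi/2$), whose extremal function is the periodic sawtooth wave whose antiderivative saturates the bound. I would therefore reduce the core estimate to this known sharp trigonometric result. The main technical obstacle is the transfer step: differentiating $f(\cos\theta)$ introduces a $\sin\theta$ factor, so one must carefully track how $\|f'\|_\infty$ and the sign changes of $h$ behave under the change of variables and restrict consistently to even trigonometric polynomials. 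Managing this factor cleanly---or, alternatively, carrying out the extremal oscillation analysis directly in the algebraic variable---is where the real work lies, whereas the induction and the terminal bound $\mathbf{E}_{n-k}(f^{(k)}) \le \|f^{(k)}\|_\infty$ are routine.
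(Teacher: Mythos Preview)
The paper does not actually prove this lemma: it is stated with a direct citation to Theorem~16.5 and equation~(16.50) of Powell's textbook and is used as a black box in the proof of Lemma~\ref{thm:Convergence_MomentSys_m}. So there is no ``paper's own proof'' to compare against.

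That said, your outline is the standard Jackson-theorem argument and is essentially the route taken in Powell (and in Cheney's and Natanson's treatments): one establishes the one-step inequality $\mathbf{E}_n(f)\le \tfrac{\pi}{2n}\mathbf{E}_{n-1}(f')$ and iterates. Your reduction $\mathbf{E}_n(f)=\mathbf{E}_n(H)$ with $H$ the antiderivative of the best-approximation error $h=f'-p^*$ is correct, and you are right that equioscillation forces $h$ to change sign at least $n$ times. You are also right that the sharp constant $\pi/2$ is the first Favard constant and that the clean way to obtain it is in the trigonometric setting. The one place where your sketch is slightly misleading is the order of operations: the classical arguments prove the one-step inequality for $2\pi$-periodic functions and trigonometric polynomials \emph{first} (via the Favard/Akhiezer--Krein extremal problem or an explicit Jackson kernel), and only then transfer the whole statement to algebraic polynomials on $[-1,1]$ through $x=\cos\theta$. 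Starting in the algebraic variable, producing $H$, and then passing to $\theta$ entangles the $\sin\theta$ Jacobian with the oscillation count in a way that is harder to close; if you reorganize so that the entire recursion lives on the trigonometric side and the substitution $x=\cos\theta$ is applied once at the end, the $\sin\theta$ issue disappears and the argument goes through cleanly.
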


\begin{comment}
\begin{lemma}[Developed from Thm. 1 in \cite{riess1972estimates}]\label{thm:BoundE(x^n+2m)}
  For any positive integers $m$ and $n$ that satisfy $2m \leq n+2$, the best $L_\infty$ approximation of function $x^{n+2m}$ on interval $[-1,1]$ by polynomials of degree $n$ satisfies,
  \begin{equation*}
   \min_{p \in \mathcal{P}_n} \big\| x^{n+2m} - p(x) \big\|_{ L_\infty[-1,1]}\leq \frac{1}{2^{n+2m-1}}\frac{n^{m-1}}{(m-1)!}\Big[1+\frac{K(m)}{n}\Big],
  \end{equation*}
  where $\mathcal{P}_n$ is the class of all polynomials of degree at most $n$, and $K(m)$ is a constant depends on $m$.
\end{lemma}
\end{comment}

This lemma shows that $\lim_{n\to \infty}\mathbf{E}_n(f)=0$ with a convergence rate $\frac{1}{n^k}$. Next, we investigate some limit properties of the parameter $P_{k,n}$ defined in \eqref{eq:p_k,n}. To this end, the following lemma about the convergence of a specific sequence is needed.
\begin{lemma}\label{thm:convergence_an}
  Let $\ell$ be a fixed nonnegative integer. For any positive constant   $c < e^{-1}$, the sequence
    \[
        a_n=c^n\, \frac{n^{n+\ell}}{n!} \to 0,
    \]
  as $n \to \infty$.
\end{lemma}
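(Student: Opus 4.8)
The plan is to apply the ratio test for sequences, which is available because every term $a_n$ is positive (as $c>0$). First I would form the ratio of consecutive terms and simplify it into a transparent form. Using $n!/(n+1)! = 1/(n+1)$, one obtains
\[
\frac{a_{n+1}}{a_n} = c\,\frac{(n+1)^{n+1+\ell}}{(n+1)\,n^{n+\ell}} = c\left(\frac{n+1}{n}\right)^{n+\ell} = c\left(1+\frac1n\right)^{n+\ell}.
\]

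Next I would take the limit of this ratio. Since $(1+1/n)^n \to e$ and, for the fixed integer $\ell$, the extra factor $(1+1/n)^\ell \to 1$ as $n\to\infty$, the ratio converges:
\[
\lim_{n\to\infty}\frac{a_{n+1}}{a_n} = c\,e.
\]
The hypothesis $c < e^{-1}$ is exactly what gives $ce < 1$.

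Finally I would convert this ratio bound into convergence of the sequence itself. Picking any $r$ with $ce < r < 1$, the limit above yields an index $N$ such that $a_{n+1} \le r\,a_n$ for all $n \ge N$, and hence $a_n \le a_N\,r^{\,n-N}$ for $n \ge N$. Since $0<r<1$ the right-hand side tends to $0$, so $a_n \to 0$ by squeezing (all $a_n\ge 0$).

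The computation is entirely routine, and the only point that deserves a sentence of care is the passage from the ratio limit to the conclusion $a_n\to 0$, which is the elementary geometric-domination argument just described. An alternative, equally short route would invoke Stirling's formula $n! \sim \sqrt{2\pi n}\,n^n e^{-n}$ to write $a_n \sim (ce)^n\, n^{\ell - 1/2}/\sqrt{2\pi}$, after which the claim is immediate because the exponentially decaying factor $(ce)^n$ overwhelms the polynomial factor $n^{\ell-1/2}$.
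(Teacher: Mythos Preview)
Your proof is correct. The ratio-test argument is clean and fully rigorous, and your closing remark about Stirling's formula is exactly the route the paper takes: the paper's entire proof consists of invoking $n!=\sqrt{2\pi}\,n^{n+1/2}e^{-n}(1+\epsilon_n)$ with $\epsilon_n\to 0$, from which $a_n\sim (ce)^n n^{\ell-1/2}/\sqrt{2\pi}\to 0$ is immediate.

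The difference is minor but worth noting. Your primary argument via the ratio test is slightly more elementary in that it uses only the classical limit $(1+1/n)^n\to e$ rather than the full Stirling asymptotic; it also makes transparent that $c<e^{-1}$ is exactly the threshold. The paper's Stirling route, on the other hand, yields an explicit asymptotic equivalent for $a_n$, which is marginally more informative (one sees the precise polynomial prefactor $n^{\ell-1/2}$), though that extra information is not used anywhere. Either approach is entirely adequate for this auxiliary lemma.
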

\begin{proof}
By using Stirling's formula $n!=\sqrt{2\pi} n^{(n+\frac{1}{2})} e^{-n}(1+\epsilon_n)$, where $\epsilon_n \to 0$ as $n \to \infty$ \cite{robbins1955remark}, the assertion follows.
\end{proof}
%
%\begin{proof}
%  The term $a_{n+1}$ satisfies
%  \begin{eqnarray*}
%  % \nonumber to remove numbering (before each equation)
%    a_{n+1}
%    &=& c^{n+1} \frac{(n+1)^{(n+\ell+1)}}{(n+1)!} \\
%     &=& c^n \cdot \frac{1}{n!}(n+1)^{n+\ell}\cdot c \\
%     &=& a_n \cdot \Big(1+\frac{1}{n}\Big)^{n+\ell} \cdot c.
%  \end{eqnarray*}
%  Since the sequence $\lim_{n\to \infty}(1+\frac{1}{n})^n=e$ and $\lim_{n\to \infty}(1+\frac{1}{n})^\ell=1$, there is an integer $N$ such that for any $n > N$, $(1+\frac{1}{n})^{n+\ell}\cdot c<1$. Because taking away the first finite terms of a sequence would not change its convergence, by removing the first $N$ terms of $\{a_n\}$ we know $\lim_{n\to \infty} a_n=0$.
%\end{proof}

\begin{prop}\label{thm:limit_E2m,4m-1}
  Let $\ell \in \mathbb{Z}^+$ be a positive integer, then we have
  \[
    \lim_{M\to \infty} M^\ell\, P_{4M-1,2M} =0,
  \]
  which implies that as $M \to \infty$, $\mathbf{E}_{2M}(x^{4M-1})\to 0$ faster than the reciprocal of any polynomial.
\end{prop}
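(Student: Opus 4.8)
The plan is to reduce the proposition to a concrete tail sum of binomial coefficients and then feed it into Lemma~\ref{thm:convergence_an}. First I would unwind the definition \eqref{eq:p_k,n} with $k=4M-1$ and $n=2M$. Since $\frac{n+k}{2}=3M-\tfrac12$, the summation index runs over integers $j\ge 3M$, so that
\[
P_{4M-1,2M}=\frac{1}{2^{4M-2}}\sum_{j=3M}^{4M-1}\binom{4M-1}{j}.
\]
In the coin-tossing picture of Lemma~\ref{thm:montonicityP_k,n}, this is the probability that a fair coin flipped $4M-1$ times produces at least $3M$ heads, i.e.\ a deviation of order $M$ above the mean $2M$ while the standard deviation is only $O(\sqrt M)$. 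One therefore expects exponential decay, and the only real work is to exhibit an exponential rate strictly smaller than $e^{-1}$ so that Lemma~\ref{thm:convergence_an} can absorb the polynomial factor $M^\ell$.

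Next I would bound the tail sum crudely but carefully. The $M$ summands $\binom{4M-1}{j}$, $3M\le j\le 4M-1$, all lie in the decreasing regime $j>\frac{4M-1}{2}$, so the largest is the first one and
\[
\sum_{j=3M}^{4M-1}\binom{4M-1}{j}\le M\binom{4M-1}{3M}=M\binom{4M-1}{M-1}.
\]
Using the elementary estimate $\binom{n}{k}\le n^k/k!$ with $n=4M-1$, $k=M-1$, together with $4M-1\le 4M$, gives
\[
\binom{4M-1}{M-1}\le\frac{(4M)^{M-1}}{(M-1)!}=\frac{4^{M-1}M^{M-1}}{(M-1)!}.
\]

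Finally I would collect the powers of two and four. Since $\frac{4^{M-1}}{2^{4M-2}}=4^{-M}$ and $(M-1)!=M!/M$, the previous bounds combine to $P_{4M-1,2M}\le 4^{-M}\,M^{M+1}/M!$, hence
\[
M^{\ell}P_{4M-1,2M}\le\Big(\tfrac14\Big)^{M}\frac{M^{M+\ell+1}}{M!}.
\]
Because $\tfrac14<e^{-1}$, Lemma~\ref{thm:convergence_an} (applied with $c=\tfrac14$ and with $\ell+1$ in place of $\ell$) shows the right-hand side tends to $0$, which is the claim. The concluding remark about $\mathbf{E}_{2M}(x^{4M-1})$ then follows at once from Lemma~\ref{thm:BoundE(x^k)}, which yields $\mathbf{E}_{2M}(x^{4M-1})\le P_{4M-1,2M}$ (valid since $2M<4M-1$).

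I expect the main obstacle to be quantitative rather than conceptual: a naive bound such as replacing the tail sum by the full sum $2^{4M-1}$ only yields rate $c=1$, which is useless here. The crux is to keep the exponential base strictly below $e^{-1}$, and the bound $\binom{n}{k}\le n^k/k!$ applied to the \emph{short} side $k=M-1$ is exactly what produces the favorable base $\tfrac14$ while leaving a factorial $M!$ in the denominator in precisely the shape required by Lemma~\ref{thm:convergence_an}. A secondary point to verify is the monotonicity of the upper binomial tail that justifies bounding the sum by $M$ times its leading term.
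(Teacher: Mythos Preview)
Your proposal is correct and follows essentially the same route as the paper: both unwind $P_{4M-1,2M}$ as the tail sum $\frac{1}{2^{4M-2}}\sum_{j=3M}^{4M-1}\binom{4M-1}{j}$, bound it by $M\binom{4M-1}{3M}$ via monotonicity of the binomial coefficients past the midpoint, estimate $\binom{4M-1}{M-1}\le (4M)^{M-1}/(M-1)!$ (the paper writes this as the product $\prod_{j=1}^{M-1}(4M-j)\le (4M)^{M-1}$, which is the same inequality), and arrive at the identical bound $M^\ell P_{4M-1,2M}\le (1/4)^M M^{M+\ell+1}/M!$ before invoking Lemma~\ref{thm:convergence_an}. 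Your bookkeeping and the concluding appeal to Lemma~\ref{thm:BoundE(x^k)} are also in agreement with the paper.
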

\begin{proof}
  By the definition of $P_{k,n}$, we have
  \begingroup
  \allowdisplaybreaks
  \begin{align*}
  % \nonumber to remove numbering (before each equation)
    M^\ell P_{4M-1,2M}\!=&\; M^\ell\frac{1}{2^{4M-2}} \sum_{j \geq 3M}^{4M-1} \binom{4M-1}{j} \\
     \leq&\;  M^\ell\,\frac{1}{2^{4M-2}}\, M \, \binom{4M-1}{3M}\\
     =&\;M^{\ell+1} \, \frac{1}{4^{2M-1}}  \frac{1}{(M-1)!} \prod_{j=1}^{M-1}[4M-j]\\
     =&\;M^{\ell+2}\, \frac{1}{M!} \frac{1}{4^{2M-1}} \prod_{j=1}^{M-1}[4M-j]\\
     =&\;M^{\ell+2}\, \frac{1}{M!} \frac{1}{4^{2M-1}} (4M)^{M-1} \!\prod_{j=1}^{M-1}\Big[1-\frac{j}{4M}\Big]\\
     =&\;  M^{\ell+2}\,\Bigg(\frac{ \big(\frac{1}{4}\big) ^M \, M^{M-1}}{M!} \Bigg) \prod_{j=1}^{M-1}\Big[1-\frac{j}{4M}\Big] \\
      \leq &\; \frac{ \big(\frac{1}{4}\big) ^M \, M^{M + \ell + 1}}{M!}.
  \end{align*}
  \endgroup
  Since $1/4 < e^{-1}$ , by Lemma~\ref{thm:convergence_an}, we obtain the assertion.
\end{proof}

\begin{comment}
\begin{remark}
  Although both Lemma~\ref{thm:BoundE(f)} and Lemma~\ref{thm:BoundE(x^k)} focus on the error bounds of the best approximation, for the power function $x^k$, the error bound given by Lemma~\ref{thm:BoundE(x^k)} is much tighter than that by Lemma~\ref{thm:BoundE(f)}. Because by Lemma~\ref{thm:BoundE(f)}, $\mathbf{E}_{2m}(x^{4m-1}) \leq (\pi/2)^{2m}$ which  diverges when $m \to \infty$, but the error bound given by Lemma~\ref{thm:BoundE(x^k)} will converge, according to Proposition~\ref{thm:limit_E2m,4m-1}.
\end{remark}
\end{comment}

Now we are ready to prove Lemma~\ref{thm:Convergence_MomentSys_m}.
%The next Theorem is the most essential building block to prove Theorem~\ref{thm:Convergence_MomentSys_2Norm}.
%\begin{thm}\label{thm:Convergence_MomentSys_m}
%  Let function $f\in \mathbf C^3[-1,1]$. For any $\epsilon >0$, there is a $M>0$, such that for any $m \geq M$
%  \begin{equation}\label{eq:thm:Convergence_MomentSys_m}
%    m \cdot \Big\| \frac{\partial \phi_k}{\partial x} f(x) - p_k^{*}(x) \Big\|_\infty \leq \epsilon,
%  \end{equation}
%  for each $k=1, 2, \dots, 4m$, where $p_k^*(x) \in \mathcal{P}_{4m}[-1,1]$ is the $L_\infty$ approximation of function $\frac{\partial \phi_k}{\partial x} f(x)$ by polynomials with degree at most $4m$.
%\end{thm}
\begin{proof}[Proof of Lemma~\ref{thm:Convergence_MomentSys_m}]
 In order to prove the lemma, it is sufficient to prove that
 for any $0<\epsilon <1$, there is an integer $S>0$, such that for any $M \geq S$
  \begin{equation}\label{eq:thm:Convergence_MomentSys_m}
    M  \Big\| \frac{\partial \phi_k}{\partial x} f(x) - p_k^{*}(x) \Big\|_\infty \leq \epsilon,
  \end{equation}
  for each $k=1, 2, \dots, 4M$, where $p_k^*(x) \in \mathcal{P}_{4M}$ is the best $L_\infty$ approximation of the function $(\partial_x \phi_k) f(x)$. Since $f\in \mathbf C^3[-1,1]$, there exist real numbers $B$ and $\bar B$, such that $\|f\|_\infty \leq B$ and $\|f^{(3)}\|_\infty \leq \bar B$. Now, fix an $\varepsilon \in (0,1)$.

  By Lemma~\ref{thm:BoundE(f)}, for $M>1$, the approximation error of $f$ by $\mathcal{P}_{2M}$ satisfies
  \begin{equation*}
    \mathbf{E}_{2M}(f)\leq \Big(\frac{1}{2} \pi \Big)^3 \frac{1}{2M(2M-1)(2M-2)} \, \bar B,
  \end{equation*}
  which implies that $\lim_{M \to \infty} M^2 \, \mathbf{E}_{2M}(f)=0$. Hence, there exits an integer $S_1$ such that $\forall M \geq S_1$, there exists $q_1^*(x) \in \mathcal{P}_{2M}$ such that
  \begin{equation}\label{eq:Proof_DensityThm_1}
    M^2 \big\|f(x)-q_1^*(x)\big\|_\infty \leq \epsilon.
  \end{equation}

  According to Proposition~\ref{thm:limit_E2m,4m-1}, there exists an integer $S_2$ such that $\forall M \geq S_2$, $M^2 \, \mathbf{E}_{2M}(x^{4M-1}) \leq \epsilon$, i.e., there exists a polynomial $q_2^* \in \mathcal{P}_{2M}$ such that
  \begin{equation}\label{eq:Proof_DensityThm_2}
    M \big \|4M x^{4M-1}-q_2^*(x)\big\|_\infty \leq 4\epsilon.
  \end{equation}

  Then we first prove \eqref{eq:thm:Convergence_MomentSys_m} for the case $k=4M$. If $k=4M$, for $M\geq \max\{S_1,S_2\}$, since $q_1^*(x)\, q_2^*(x) \in \mathcal{P}_{4M}$, we have
  \begin{align}
  % \nonumber to remove numbering (before each equation)
    & M\, \Big\| \frac{\partial \phi_k}{\partial x} f(x) - p_k^{*}(x) \Big\|_\infty \!
    \leq M\,  \Big\| \frac{\partial \phi_k}{\partial x} f(x) - q_1^*(x)\, q_2^*(x) \Big\|_\infty\ \nonumber \\
    &= M \Big\| \frac{\partial \phi_k}{\partial x} f -q_2^*\, f + q_2^*\, f- q_1^*\, q_2^* \Big\|_\infty\ \nonumber \\
    &\leq  M \|f\|_\infty \big\| 4M x^{4M-1} - q_2^* \big\|_\infty  + M\|q_2^*\|_\infty  \|f-q_1^*\|_\infty\ \nonumber  \\
     &\leq  4B\epsilon + M\, \big\|q_2^*-4Mx^{4M-1}+4Mx^{4M-1}\big\|_\infty\, \|f-q_1^*\|_\infty \nonumber \\
%\quad \quad \mbox{\Big[By (\ref{eq:Proof_DensityThm_2})\Big]}
     &\leq 4B\epsilon + \Big(4\epsilon +4M^2 \big\|x^{4M-1}\big\|_\infty \Big) \,   \|f-q_1^*\|_\infty \nonumber \\
       &\leq  (4B+ 8) \epsilon, \label{eq:proof_DensityThm_4}
  \end{align}
  where the second inequity comes from the triangle inequality and the fact that for any $L_\infty$-functions $g,h$, $\| gh \|_\infty \leq \|g\|_\infty \|h\|_\infty$, the third and the fourth inequities come from \eqref{eq:Proof_DensityThm_2},
  and the last inequality follows from  \eqref{eq:Proof_DensityThm_1} and $x\in [-1,1]$.
%  where the second and third last inequalities  follow from \eqref{eq:Proof_DensityThm_2}
%  and the last inequality follows from  \eqref{eq:Proof_DensityThm_1} and $x\in [-1,1]$.
  By rescaling the arbitrarily small $\epsilon$, the argument for $k=4M$ is proved.

  Next, we consider the situation for any $k< 4M$.
Due to the existence and uniqueness of the $L_\infty$-approximation \cite[Thm.~7.6]{powell1981approximation}, we know that there exists a $q_2^k \in \mathcal{P}_{2M}$ such that $\mathbf{E}_{2M}(x^{k-1})= \|x^{k-1}-\frac{1}{k}q_2^k\|_\infty$. Therefore

  %Additionally, Proposition~\ref{thm:limit_E2m,4m-1} shows that there exists a polynomial  $q_2^k \in \mathcal{P}_{2M}$ such that
  \begin{eqnarray}
  % \nonumber to remove numbering (before each equation)
    M\, \|k\, x^{k-1} -q_2^k\|_\infty &<&  4M^2 \|x^{k-1}-\frac{1}{k}q_2^k\|_\infty \nonumber \\
     &=& 4M^2\mathbf{E}_{2M}(x^{k-1}) \nonumber \\
     &\leq& 4M^2 P_{4M-1,2M} \leq 4\epsilon, \label{eq:proof_DensityThm_3}
  \end{eqnarray}
  where the first inequality comes from $k< 4M$, the second inequality is due to Lemma~\ref{thm:P_k,nMontonicity}, and the last inequality is due to Proposition~\ref{thm:limit_E2m,4m-1} for large enough $M$. Then we employ a similar procedure as that in (\ref{eq:proof_DensityThm_4}) and use equation   (\ref{eq:proof_DensityThm_3}), by which \eqref{eq:thm:Convergence_MomentSys_m} can be obtained  for case $k<4M$.
\end{proof}
Actually, by imposing more smoothness on $f$, the convergence rate in Lemma~\ref{thm:Convergence_MomentSys_m} can be faster than any polynomial order, which we formalize in the following corollary.

\begin{cor}\label{thm:Convergence_MomentSys_m^l}
 If function $f$ in Lemma~\ref{thm:Convergence_MomentSys_m} is $(\ell+2)$-continuously differentiable, i.e., $f\in \mathbf C^{\ell+2}[-1,1]$, for a fixed $\ell\in \mathbb{Z}^+$, then the lemma holds with the equation (\ref{eq:thm:Convergence_MomentSys_m1}) replaced by
    \begin{equation*}
     \max_{k=1, \dots, 4M} { M^\ell \mathbf{E}_{4M}\Big(f(x)\partial_x \phi_k(x)  \Big)} \to 0.
  \end{equation*}
\end{cor}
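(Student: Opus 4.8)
The plan is to reproduce the proof of Lemma~\ref{thm:Convergence_MomentSys_m} essentially verbatim, raising every exponent of $M$ by $\ell-1$: the prefactor $M$ becomes $M^\ell$, and the two auxiliary estimates that carried the power $2$ in \eqref{eq:Proof_DensityThm_1} and \eqref{eq:Proof_DensityThm_2} must now carry $M^{\ell+1}$. Fixing $0<\epsilon<1$, I would show that there is an $S$ with
\[
M^\ell\Big\|\tfrac{\partial\phi_k}{\partial x}f-p_k^*\Big\|_\infty\le\epsilon
\]
for all $M\ge S$ and all $1\le k\le 4M$, where $p_k^*\in\mathcal{P}_{4M}[-1,1]$ is the best approximation of $(\partial_x\phi_k)f=kx^{k-1}f$; this is the exact analogue of \eqref{eq:thm:Convergence_MomentSys_m}, and the stated limit follows at once.

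First I would assemble the two inputs. Since $f\in\mathbf C^{\ell+2}$, pick $B,\bar B$ with $\|f\|_\infty\le B$ and $\|f^{(\ell+2)}\|_\infty\le\bar B$; Lemma~\ref{thm:BoundE(f)} applied with order $\ell+2$ gives $\mathbf{E}_{2M}(f)\le(\pi/2)^{\ell+2}\frac{(2M-\ell-2)!}{(2M)!}\bar B$, which decays like $M^{-(\ell+2)}$, so $M^{\ell+1}\mathbf{E}_{2M}(f)\to0$ and there is $S_1$ such that for $M\ge S_1$ some $q_1^*\in\mathcal{P}_{2M}$ satisfies $M^{\ell+1}\|f-q_1^*\|_\infty\le\epsilon$ — the upgrade of \eqref{eq:Proof_DensityThm_1}. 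For the second input, Proposition~\ref{thm:limit_E2m,4m-1} already provides $M^sP_{4M-1,2M}\to0$ for every positive integer $s$; taking $s=\ell+1$ together with $\mathbf{E}_{2M}(x^{4M-1})\le P_{4M-1,2M}$ from Lemma~\ref{thm:BoundE(x^k)} yields $M^{\ell+1}\mathbf{E}_{2M}(x^{4M-1})\to0$, so for $M$ beyond some $S_2$ the choice $q_2^*=4M\,\tilde q_2^*$, with $\tilde q_2^*\in\mathcal{P}_{2M}$ the best approximant of $x^{4M-1}$, satisfies $M^\ell\|4Mx^{4M-1}-q_2^*\|_\infty=4M^{\ell+1}\mathbf{E}_{2M}(x^{4M-1})\le4\epsilon$ — the upgrade of \eqref{eq:Proof_DensityThm_2}.

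With these in hand I would repeat the telescoping estimate \eqref{eq:proof_DensityThm_4} with prefactor $M^\ell$. For $k=4M$, using $q_1^*q_2^*\in\mathcal{P}_{4M}$ as trial polynomial and the splitting $f\cdot4Mx^{4M-1}-q_1^*q_2^*=(4Mx^{4M-1}-q_2^*)f+q_2^*(f-q_1^*)$, the first part contributes at most $B\cdot M^\ell\|4Mx^{4M-1}-q_2^*\|_\infty\le4B\epsilon$, while for the second I would bound $\|q_2^*\|_\infty\le\|q_2^*-4Mx^{4M-1}\|_\infty+4M\le4\epsilon M^{-\ell}+4M$, so that $M^\ell\|q_2^*\|_\infty\le4\epsilon+4M^{\ell+1}$, and multiplying by $\|f-q_1^*\|_\infty\le\epsilon M^{-(\ell+1)}$ yields at most $8\epsilon$; the total is $(4B+8)\epsilon$. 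For $k<4M$ the only modification, exactly as in the original proof via \eqref{eq:proof_DensityThm_3}, is to replace $q_2^*$ by a degree-$2M$ approximant $q_2^k$ of $kx^{k-1}$, whose error obeys $\mathbf{E}_{2M}(x^{k-1})\le P_{4M-1,2M}$ by Lemma~\ref{thm:P_k,nMontonicity}, giving $M^\ell\|kx^{k-1}-q_2^k\|_\infty\le4M^{\ell+1}P_{4M-1,2M}\le4\epsilon$; the same telescoping then closes. Rescaling $\epsilon$ completes the argument.

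The delicate part will be the exponent bookkeeping rather than any new estimate: each place where the original proof spent one power of $M$ on the prefactor and one on the factor $k\sim4M$ now spends $\ell$ on the prefactor and one on $k$, so the auxiliary bounds must be sharpened from $M^2$ to $M^{\ell+1}$. The point worth emphasizing is that the two inputs scale asymmetrically — Proposition~\ref{thm:limit_E2m,4m-1} already yields super-polynomial decay of $P_{4M-1,2M}$ and so imposes no additional requirement, whereas the term coming from approximating $f$ is precisely what forces the hypothesis $f\in\mathbf C^{\ell+2}$. Checking that this smoothness is both sufficient and sharp for $M^{\ell+1}\mathbf{E}_{2M}(f)\to0$ (one needs $\ell+1<\ell+2$) is the single genuinely new verification.
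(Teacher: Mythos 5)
Your proof is correct and follows exactly the route the paper intends: the paper states this corollary without proof, leaving it as the evident modification of the proof of Lemma~\ref{thm:Convergence_MomentSys_m}, and your exponent bookkeeping (prefactor $M^\ell$, auxiliary bounds $M^{\ell+1}\,\mathbf{E}_{2M}(f)\to 0$ via Lemma~\ref{thm:BoundE(f)} with $f\in\mathbf C^{\ell+2}$, and $M^{\ell+1}P_{4M-1,2M}\to 0$ via Proposition~\ref{thm:limit_E2m,4m-1}) is precisely the needed upgrade of \eqref{eq:Proof_DensityThm_1} and \eqref{eq:Proof_DensityThm_2}. The telescoping estimate for $k=4M$ and the reduction for $k<4M$ via Lemma~\ref{thm:P_k,nMontonicity} then go through verbatim, so nothing is missing.
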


Now by the means of Lemma~\ref{thm:Convergence_MomentSys_m}, Theorem~\ref{thm:Convergence_MomentSys_2Norm} can be easily proved. Moreover Corollary~\ref{thm:Convergence_MomentSys_2Norm_m^l} follows directly from Corollary~\ref{thm:Convergence_MomentSys_m^l}.

\subsection{Proof of Lemma~\ref{lem:limit_E2m+1,2m}}
\label{sec:Apx_Pf_limit_E2m+1,2m}
  This argument can be easily verified by rescaling the coordinates. We let $x=\frac{b-a}{2}y+\frac{b+a}{2}$, where $y\in [-1,1]$. This means that $x^{n+1} = (\frac{b-a}{2})^{n+1}y^{n+1}+p_n(y)$, where $p_n(y)$ is a polynomial of degree $n$. Clearly such a polynomial can be exactly approximated by monomials of degree $\leq n$, and thus
\begin{align*}
  \mathbf{E}_n^{[a,b]}(x^{n+1}) &= \mathbf{E}_n \left( \left( \frac{b-a}{2} \right)^{n+1}y^{n+1}+p_n(y) \right) \\
  &=  \left( \frac{b-a}{2}\right)^{n+1}  \mathbf{E}_n(y^{n+1}),
\end{align*}
where $y \in [-1,1]$. Then by Lemma~\ref{thm:BoundE(x^k)},
\begin{align*}
    \mathbf{E}_n^{[a,b]}(x^{n+1})
    %&= \left( \frac{b-a}{2}\right)^{n+1}  \mathbf{E}_n(y^{n+1})\\
    &\geq \frac{1}{4e} \Big(\frac{b-a}{2}\Big)^{n+1} \, P_{n+1,n}
    \geq \frac{1}{2e} \Big(\frac{b-a}{4}\Big)^{n+1}.
\end{align*}
  Since $b-a>4$, the assertion follows. \hfill$\square$

%%%%%%%%%%%%%%%%%%%%%%%%%%%%%%%%%%%%%%%%%%%%%%%%%%%%%%%%%%%%%%%%%%%%%%%%%%%%%%%%
%\section*{APPENDIX}
%\ldots

%\section*{ACKNOWLEDGMENT}

%\ldots

%%%%%%%%%%%%%%%%%%%%%%%%%%%%%%%%%%%%%%%%%%%%%%%%%%%%%%%%%%%%%%%%%%%%%%%%%%%%%%%%%%%%%%%%%%%%%%%%%%%%%%%%%

\bibliographystyle{plain}
\bibliography{bib_johan_abbrv}

\vspace{-8mm}
\begin{IEEEbiography}
    [{\includegraphics[width=1in,height=1in,clip,keepaspectratio]{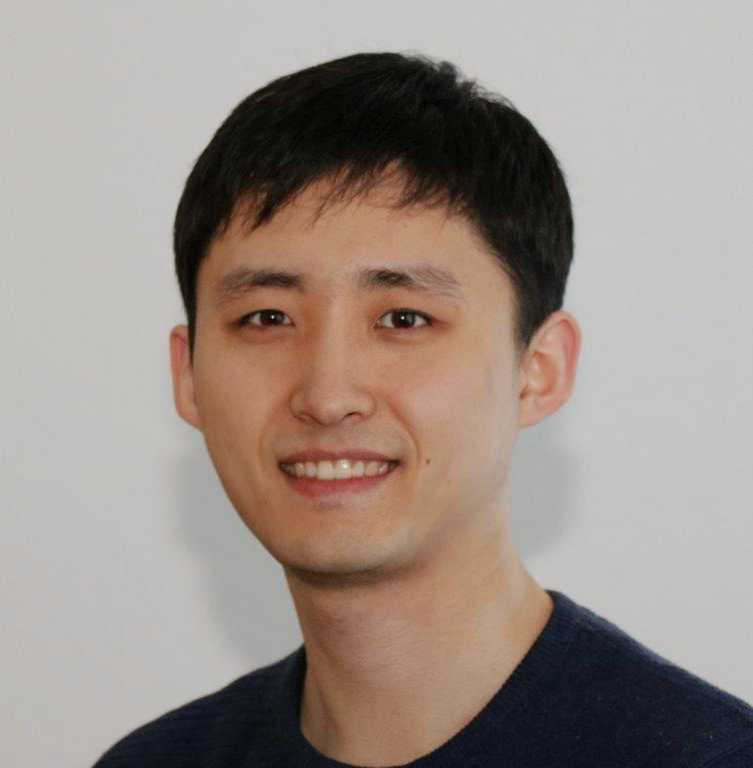}}]{Silun Zhang} (S'16--M'20) received his B.Eng. and M.Sc. degrees in Automation from Harbin Institute of Technology, China, in 2011 and 2013 respectively, and the PhD degree in Optimization and Systems Theory from Department of Mathematics, KTH Royal Institute of Technology, Sweden, in 2019.

He is currently a postdoctoral researcher with the Laboratory for Information and Decision Systems (LIDS), MIT, USA. His main research interests include nonlinear control, networked systems, rigid-body attitude control, and modeling large-scale systems.

\end{IEEEbiography}
\vspace{-8mm}

\begin{IEEEbiography}
    [{\includegraphics[width=1in,height=1.25in,clip,keepaspectratio]{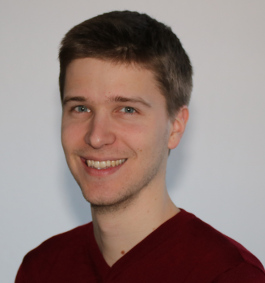}}]{Axel Ringh} (S'15--M'20) received a M.Sc. degree in Engineering Physics in 2014, and a Ph.D. degree in Applied and Computational Mathematics in 2019, both from KTH Royal Institute of Technology, Stockholm, Sweden. He is currently a Wallenberg postdoctoral researcher with the Department of Electronic and Computer Engineering, The Hong Kong University of Science and Technology, Hong Kong, China. His current research are within the areas of control theory and signal processing, and include analytic interpolation problems, moment problems, optimal mass transport, and methods for convex optimization.
    %received a M.Sc. degree in engineering physics from KTH Royal Institute of Technology, Stockholm, Sweden, in 2014. During his studies he spent one year on exchange at {\'E}cole Polytechnique F{\'e}d{\'e}rale de Lausanne, Lausanne, Switzerland, and wrote his master thesis at Shanghai Jiao Tong University, Shanghai, China. Since 2014 he is a Ph.D. student at the Division of Optimization and Systems Theory, Department of Mathematics, KTH Royal Institute of Technology. His research interest include moment problems, inverse problems in imaging, optimal mass transport, methods for convex optimization, and machine learning.
\end{IEEEbiography}

\vspace{-8mm}
\begin{IEEEbiography}
    [{\includegraphics[width=1in,height=1.25in,clip,keepaspectratio]{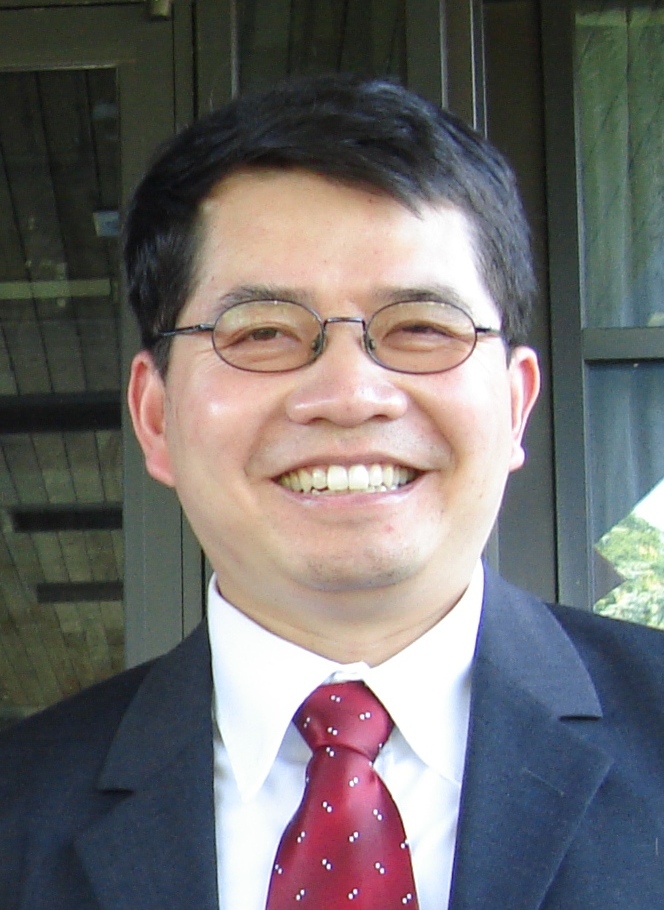}}]{Xiaoming Hu}
received the B.S. degree from the University of Science and Technology of China in 1983, and the M.S. and Ph.D. degrees from the Arizona State University in 1986 and 1989 respectively. He served as a research assistant at the Institute of Automation, the Chinese Academy of Sciences, from 1983 to 1984. From 1989 to 1990 he was a Gustafsson Postdoctoral Fellow at the Royal Institute of Technology, Stockholm, where he is currently a professor of Optimization and Systems Theory. His main research interests are in nonlinear control systems, nonlinear observer design, sensing and active perception, motion planning, control of multi-agent systems, and mobile manipulation.
\end{IEEEbiography}

\vspace{-8mm}

\begin{IEEEbiography}
    [{\includegraphics[width=1in,height=1.25in,clip,keepaspectratio]{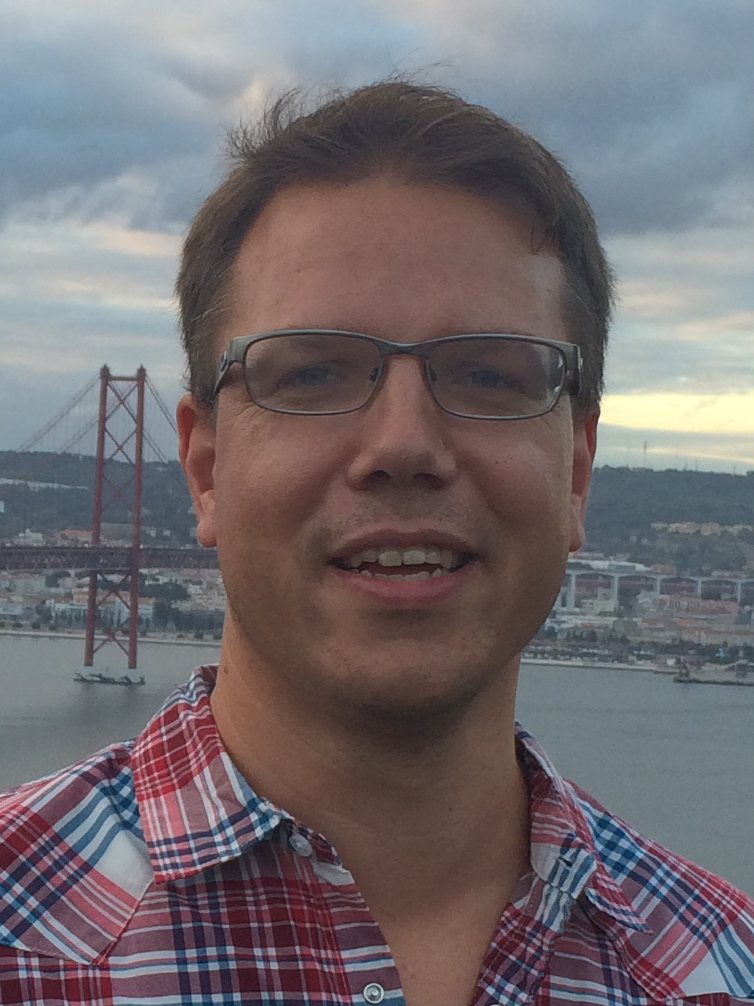}}]{Johan Karlsson} (S'06--M'09--SM'18)
was born in Stockholm, Sweden, 1979. He received an M.Sc. degree in Engineering Physics from Royal Institute of Technology (KTH) in 2003 and a Ph.D. in Optimization and Systems Theory from KTH in 2008. From 2009 to 2011, he was with Sirius International, Stockholm; and from 2011 to 2013 he was working as a postdoctoral associate at the Department of Computer and Electrical Engineering, University of Florida. From 2013 he joined the Department of Mathematics, KTH, as an assistant professor and since 2017 he is working as an associate professor. He has been the main organizer of several workshops, in particular for establishing collaborations between the academia and the industry. His current research interests include inverse problems, methods for large scale optimization, and model reduction, for applications in remote sensing, signal processing, and control theory.
%
%    {Johan Karlsson}
%was born in Stockholm, Sweden, in 1979. He received an M.S. degree in engineering physics from the Royal Institute of Technology (KTH) in 2003 and a Ph.D. degree in optimization and systems theory from KTH in 2008. From 2009 to 2011, he was with Sirius International, Stockholm. From 2011 to 2013, he was a postdoctoral associate at the Department of Computer and Electrical Engineering, University of Florida.
%From 2013 he joined the Department of Mathematics, KTH, as an assistant professor and since 2017 he is working as an associate professor. His current research interests include inverse problems, methods for large scale optimization, and model reduction, for applications in remote sensing, signal processing, and control theory.
%was born in Stockholm, Sweden, 1979. He received the M.S. degree in engineering physics from the Royal Institute of Technology(KTH), Stockholm, Sweden, in 2003. He spent the academic year 2000 - 2001 as an exchange student at Washington University, St. Louis, MO, and did his master thesis at the University of Minnesota, Minneapolis. \\
%Since fall 2003, he has been a graduate student at the Division of Optimization and Systems Theory, KTH. His research interests includes analytic interpolation, distance measures, and model reduction for applications in signal processing and control theory.
\end{IEEEbiography}

%\vspace{8pt}
%\textbf{Silun Zhang} .

\end{document}